\newenvironment{customthm}[1]
  {\innercustomthm}
  {\endinnercustomthm}
\newenvironment{customconj}[1]
  {\innercustomconj}
  {\endinnercustomconj}
\title{The many faces of a logarithmic scheme}
\author{Thibault Poiret {\it \&} Dhruv Ranganathan}
\providecommand{\leftsquigarrow}{%
  \mathrel{\mathpalette\reflect@squig\relax}%
}
\newcommand{\reflect@squig}[2]{%
  \reflectbox{$\m@th#1\rightsquigarrow$}%
}
\let\oldtocsection=\tocsection
\let\oldtocsubsection=\tocsubsection
\let\oldtocsubsubsection=\tocsubsubsection
\renewcommand{\tocsection}[2]{\hspace{0em}\oldtocsection{#1}{#2}}
\renewcommand{\tocsubsection}[2]{\hspace{1em}\oldtocsubsection{#1}{#2}}
\renewcommand{\tocsubsubsection}[2]{\hspace{2em}\oldtocsubsubsection{#1}{#2}}
\numberwithin{equation}{subsection}
\newcommand{\LSch}{\cat{LSch}}
\newcommand{\LSchint}{\cat{LSch}^{\on{int}}}
\newcommand{\LSchfs}{\cat{LSch}^{\on{fs}}}
\newcommand{\LSchpointed}{\cat{LSch}^{\on{pointed}}}
\newcommand{\ini}{\operatorname{in}}
\newcommand{\inileq}{\operatorname{in}_{\preceq}}
\newcommand*{\doublerightarrow}[2]{\mathrel{
  \settowidth{\@tempdima}{$\scriptstyle#1$}
  \settowidth{\@tempdimb}{$\scriptstyle#2$}
  \ifdim\@tempdimb>\@tempdima \@tempdima=\@tempdimb\fi
  \mathop{\vcenter{
    \offinterlineskip\ialign{\hbox to\dimexpr\@tempdima+1em{##}\cr
    \rightarrowfill\cr\noalign{\kern.5ex}
    \rightarrowfill\cr}}}\limits^{\!#1}_{\!#2}}}
\newcommand*{\triplerightarrow}[1]{\mathrel{
  \settowidth{\@tempdima}{$\scriptstyle#1$}
  \mathop{\vcenter{
    \offinterlineskip\ialign{\hbox to\dimexpr\@tempdima+1em{##}\cr
    \rightarrowfill\cr\noalign{\kern.5ex}
    \rightarrowfill\cr\noalign{\kern.5ex}
    \rightarrowfill\cr}}}\limits^{\!#1}}}
\newcommand{\on}[1]{\operatorname{#1}}
\newcommand{\bb}[1]{{\mathbb{#1}}}
\newcommand{\ca}[1]{{\mathcal{#1}}}
\newcommand{\bd}[1]{{\mathbf{#1}}}
\newcommand{\ul}[1]{{\underline{#1}}}
\newcommand{\cat}[1]{\bd{#1}}
\newcommand{\lra}{\longrightarrow}
\newcommand{\hra}{\hookrightarrow}
\newcommand{\iso}{\stackrel{\sim}{\lra}}
\def\:{\colon}
\def\.{,\dots,}
\def\CC{\mathbb C}
\def\ZZ{\mathbb Z}
\def\NN{\mathbb N}
\def\et{\mathrm{\acute{e}t}}
\def\o#1{\overline{#1}}
\def\gp{\textsf{gp}}
\def\int{\textsf{int}}
\newcommand{\Hom}{\operatorname{Hom}}
\newcommand{\Spec}{\operatorname{Spec}}
\theoremstyle{definition}
\newtheorem{definition}{Definition}[section]
\newtheorem{situation}[definition]{Situation}
\newtheorem{construction}[definition]{Construction}
\newtheorem{algorithm}[definition]{Algorithm}
\theoremstyle{plain}
\newtheorem{proposition}[definition]{Proposition}
\newtheorem{lemma}[definition]{Lemma}
\newtheorem{theorem}[definition]{Theorem}
\newtheorem{corollary}[definition]{Corollary}
\theoremstyle{remark}
\newtheorem{remark}[definition]{Remark}
\newtheorem{example}[definition]{Example}
\def\@tocline#1#2#3#4#5#6#7{\relax
  \ifnum #1>\c@tocdepth 
  \else
    \par \addpenalty\@secpenalty\addvspace{#2}%
    \begingroup \hyphenpenalty\@M
    \@ifempty{#4}{%
      \@tempdima\csname r@tocindent\number#1\endcsname\relax
    }{%
      \@tempdima#4\relax
    }%
    \parindent\z@ \leftskip#3\relax \advance\leftskip\@tempdima\relax
    \rightskip\@pnumwidth plus4em \parfillskip-\@pnumwidth
    #5\leavevmode\hskip-\@tempdima
      \ifcase #1
       \or\or \hskip 1em \or \hskip 2em \else \hskip 3em \fi%
      #6\nobreak\relax
    \dotfill\hbox to\@pnumwidth{\@tocpagenum{#7}}\par
    \nobreak
    \endgroup
  \fi}
\renewcommand{\phi}{\varphi}
\newcounter{nootje}
\begin{document}

\begin{abstract} 
A standard assumption in the study of logarithmic structures is ``fineness'', but this assumption is not preserved by intersections, fiber products, and more general limits. We explain how a coherent logarithmic scheme $X$ has a natural decomposition into ``fine faces'' -- a collection of fine logarithmic subschemes. This allows for importation of the techniques of tropical geometry into the study of more general logarithmic schemes. We explain the relevance of the construction in enumerative geometry and moduli via a range of examples. Techniques developed in the study of binomial schemes lead to effective algorithms for computing the fine faces. 
\end{abstract}

\maketitle
	
\setcounter{tocdepth}{1}
\tableofcontents


\section{Introduction}

A {\it logarithmic structure} on a scheme $X$ equips it with a collection of {\it monomial functions}, encoded by a sheaf of monomials. The canonical example is a toric variety $V$, where we declare the functions which vanish at most on the toric boundary to be monomial. Logarithmic structures on schemes are often obtained by pulling back monomial functions along maps
\[
X\to V
\]
to toric varieties. Logarithmic structures that are locally obtained from such maps play a distinguished role in the theory, and are termed {\it fine}. If the toric varieties are required to be normal, these log structures are also called {\it fine and saturated} or {\it fs}. Much of the literature in logarithmic geometry assumes fineness and saturatedness. In particular, techniques coming from toric geometry, such as cone complexes and tropical moduli theory are only accessible in the fine setting. It also appears to be the correct hypothesis for moduli problems. For these reasons, general coherent logarithmic structures have largely been neglected. Nevertheless, they arise frequently in nature. The simple reason for this is that fineness is not preserved by intersections, fiber products, and more general limits. 

Our main result is that coherent logarithmic schemes can be ``decomposed'' into fine ones. In \ref{section: faces} we introduce the {\it fine faces} of a coherent logarithmic scheme. Each is a fine logarithmic scheme. 

\begin{customthm}{A}
Let $X$ be a coherent logarithmic scheme and let $\mathcal F$ be its set of fine faces. The collection of underlying schemes admit canonical, jointly surjective, closed immersions:
\[
\{\underline F_i\to \underline X\}_{F\in \mathcal F}.
\]
One of the fine faces is the integralization of $X$.
\end{customthm}

A simple example captures the essence of the result. Let $X$, $Y$, and $B$ be toric varieties and suppose we have equivariant maps $X\to B$ and $Y\to B$. Equip the data with the toric logarithmic structures. The fiber product $X\times_B Y$ is not necessarily toric, however, the reduced underlying scheme of every component is toric. After re-indexing to remove some redundancies, the reduced fine faces are these toric components. Further details on this can be found in \ref{subsection:toric_fiber_products}. 

Faces of logarithmic schemes arise naturally whenever fiber products of logarithmic schemes arise, and this is particicularly common in logarithmic Gromov--Witten theory. In this context, the scheme theoretic fiber product is typically easier to compute, while the fine logarithmic fiber product is what one wants to compute. In \ref{section: examples}, we explain some of these occurrences, including toric fiber products in~\ref{section:toric_fibreprod} and toric Chow quotients in~\ref{sec: Chow-quotients}. This motivates the computation of the faces of a coherent logarithmic scheme. 

In \ref{section:algorithms_on_monoids}, we present various algorithms which compute algebro-geometric and logarithmic constructions. Using these, including a {\it monoidal Buchberger algorithm}, we show the following:

\begin{customthm}{B}
    The algorithms of \ref{section:algorithms_on_monoids} compute the fine faces of a coherent logarithmic scheme.
\end{customthm}

The algorithm depends on relatively detailed knowledge of the logarithmic structure, such as the explicit knowledge of charts. In moduli contexts, this is provided by tropical geometry~\cite{cavalieri2017moduli}. 

The fine faces of a logarithmic scheme $X$ admit fine logarithmic structures. We may saturate them to obtain fs faces. Saturation is analogous to normalization, and is surjective at the level of underlying schemes. The upshot is that the log structures on fs faces locally come from normal toric varieties, hence can be described in terms of cone complexes. To understand the cone complex associated to a face, we can use the ``extended cone complex'' construction of~\cite{AbramovichCaporasoPayne}. We record the following formal consequence of the construction. 

\begin{customthm}{C}
    Let $F$ be a fs face of a fiber product of fs logarithmic schemes $X\times_B Y$. The cone complex of $F$ is a face of the fiber product of the extended cone complexes associated to $X$ and $Y$ over that of $B$.
\end{customthm}

An analogous result holds the faces of any coherent logarithmic scheme. 

\subsection{Faces of moduli of abelian schemes} The paper focuses on schemes that are equipped with logarithmic structures, and extracts their ``fine pieces''. There are natural moduli spaces in higher dimensional geometry that we believe should fit into our framework. 

In~\cite{AlexeevCompleteModuli} Alexeev constructs a complete moduli space of varieties with semiabelian group actions, and in particular, complete moduli of polarized abelian varieties and polarized toric varieties. It is noted there that the moduli space $\overline{\mathcal A}_g$ of stable pairs compactifying the space $\mathcal A_g$ of principally polarized abelian varieties of dimension $g$ is {\it reducible}. In later work, Olsson identified the main component, the closure of $\mathcal A_g$, with the locus of semi-abelic pairs admitting a fine logarithmic structure~\cite{OlssonAbelianBook}.

\begin{customconj}{D}
    The moduli space $\overline{\mathcal A}_g$ admits a coherent logarithmic structure. The maximal fine faces of this logarithmic structure coincide with the components of the normalization of $\overline{\mathcal A}_g$. 
\end{customconj}

The evidence for this conjecture is that the ``other components'' of Alexeev's space appear to have natural stratifications indexed by tropical moduli data.

We expect that, if this statement is true, it would lend a logarithmic moduli interpretation to {\it all} the components of the Alexeev moduli space, in the same way that Olsson endows the principal component with one. 

One can make an analogous conjecture for any moduli space of varieties with semiabelian group action, including poarlized toric pairs, and in fact, it appears that a similar picture holds for moduli spaces of hyperplane arrangements~\cite{HKT06}. It seems potentially fruitful to use coherent non-fine logarithmic structures to understand at least some of the moduli spaces of KSBA type. 

The upshot of the paper is essentially that {\it coherent logarithmic structures are to fine ones, what binomial schemes are to toric varietes}. A number of techniques have been developed to study binomial schemes~\cite{EisenbudSturmfels}, and they can be ported over to this setting. 

\subsection{Related work} The {\it extended tropicalization} of a toroidal embedding has guided the results in this paper~\cite{AbramovichCaporasoPayne,Payne2009Analytification}. The basic observation in this paper is the fact that taking extended tropicalization does not commute with fiber products of cone complexes -- the extended tropicalization of a fiber product in fine logarithmic schemes is not the fiber product of the extended tropicalizations; the analogous statement {\it is} true for ordinary cone complexes. The explanation is that the fiber product of the extended tropicalization is larger, and sees the scheme theoretic fiber product of these schemes. 

From the logarithmic geometry literature, we rely on the notion of logarithmic structures based on {\it pointed monoids}, i.e. ones with absorbing elements, to construct the fine faces. We learned of this notion from work of Huszar--Marucs--Ulirsch~\cite{HMU} who studied it in the context of ``sublogarithmic'' morphisms. 

Finally, logarithmic Gromov--Witten theory has provided key examples and geometric motivation. Specifically, the phenomena that we have tried to isolate here arise in work related to product formulas~\cite{BattistellaNabijouRanganathan,NabijouRanganathan} and degeneration formulas~\cite{AbramovichChenGrossSiebert,RanganathanExpansions}.

\subsection*{Acknowledgements} We are grateful to N. Nabijou and S. Molcho for inspiring discussions in the context of the papers~\cite{NabijouRanganathan,MR21}. The authors were both supported by EPSRC New Investigator Award EP/V051830/1. T.P. is supported by EPSRC New Investigator Award EP/X002004/1 (PI: Yoav Len). D.R. is supported by EPSRC Frontier Research Grant EP/Y037162/1. 

\subsection*{Notation and conventions} All our rings are commutative with unity, all our monoids are commutative. Given two elements $a=(a_i)_{i\in I}$ and $b=(b_i)_{i\in I}$ in $\bb N^I$, we denote by $a\vee b$ the coordinatewise-minimum $(\operatorname{min}(a_i,b_i))_{i\in I}$ and by $a\wedge b$ the coordinatewise-maximum.

\section{Logarithmic preliminaries}

We assume the reader is familiar with logarithmic structures, but we recall and highlight the aspects that are best kept in mind during the discussion. For a comprehensive introduction to log schemes, we refer to \cite{Kato1989Logarithmic-str} and \cite{Ogus2018Lectures}.

\subsection{Logarithmic structures and charts}

We recall the fundamental notion.

\begin{definition}
A \emph{prelog structure} on a scheme $\ul X$ is a morphism $\alpha\colon M_X \to \ca O_X$ of sheaves of monoids  on $\ul X$ in the \'etale topology, where $\ca O_X$ is given its structure of multiplicative monoid. We say $\alpha$ is a \emph{log structure} if $\alpha^{-1}\ca O_X^\times \to \ca O_X^\times$ is an isomorphism. 
\end{definition}

A prelog structure can be made universally into a log structure, and we will sometimes abuse notation and do this implicitly. Logarithmic structures can be pulled back and pushed forward along scheme theoretic maps. If $f\colon\ul Y \to \ul X$ is a map of schemes, these are denoted as $f^*$ and $f_*$. An important role in the theory is played by the {\it characteristic sheaf}:
\[
\overline M_X\colonequals M_X/\mathcal O_X^\star.
\]
Its role can be understood as follows. All units are always declared as monomials, and $\overline M_X$ keeps track of monomials modulo units, i.e. of the vanishing orders of monomials on every irreducible component of their vanishing loci.

Examples can be generated by starting with the monoid of nontrivial monomials, and building a scheme out of it.

\begin{example}\label{example:global_chart}
Let $P$ be a monoid, and write $\Spec \ZZ[P]$ for the quotient of the free ring $F$ over $P$ by the relations necessary to turn $P \to \ZZ[P]$ into a multiplicative monoid map. Then $S:=\Spec \ZZ[P]$ carries a natural log structure, namely the one corresponding to the prelog structure $P \to \ca O_S$. The characteristic sheaf detects the ``extra monomials'', so its value on an open $U$ is the quotient of $P$ by the functions that become invertible on $U$. The characteristic sheaf is constructible. 

Assuming that $P$ in the previous example is finitely generated, it can be presented as a quotient.
\[
\mathbb N^I\to P.
\]
As a result, $\Spec \ZZ[P]$ is a {\it binomial subscheme} of $\mathbb A^I_\ZZ$. If, in addition, the associated ideal is prime then $\Spec \ZZ[P]$ is a possibly non-normal affine toric variety. 
\end{example}

\begin{definition}
    The category of \emph{schemes with log structures} has objects the pairs $(\ul X, \alpha)$, where $\ul X$ is a scheme and $\alpha\colon M_X \to \ca O_X$ is a log structure on $\ul X$. We will often abusively write $(\ul X,M_X)$. A morphism $f\colon (\ul X,\alpha) \to (\ul Y,\beta)$ is the data of a morphism of schemes $\ul f\colon \ul X \to \ul Y$ and a morphism of log structures $f^*M_Y \to M_X$. We say that $f$ is \emph{strict} if $f^*M_Y \to M_X$ is an isomorphism. We use the underline notation for either a scheme, or the underlying scheme of a scheme with log structure.
\end{definition}

\begin{definition}
    A (global) \emph{chart} for a scheme with log structure $X$ is a strict morphism $X \to \Spec \ZZ[P]$ for some monoid $P$. We also say that $X$ has a \emph{chart by the monoid $P$}. We say that $X$ is a \emph{log scheme} (resp. \emph{Zariski log scheme}), when $X$ admits charts by finitely generated monoids \'etale-locally (resp. Zariski locally) on $\ul X$.\footnote{In the literature, finite-generatedness of the charting monoids is not always required in the definition of log schemes.} A \emph{morphism of log schemes} is a morphism of schemes with log structures between two log schemes. We write $\LSch$ for the category of log schemes.
\end{definition}

\begin{example}
The inclusion of the monomials in the structure sheaf gives a log structure on $\bb A^n$, making it a log scheme. Similarly but more generally, for any scheme $\ul X$ and any divisor $D$ on $\ul X$, one can define the \emph{divisorial log structure corresponding to $D$} as the inclusion in $\ca O_X$ of those functions which are invertible away from $D$. When $D$ has normal crossings, this produces a log scheme. If $X,Y$ are log schemes with divisorial log structures from divisors $D,D'$, then morphisms of log schemes $X \to Y$ are equivalent to morphisms of schemes $f \colon \ul X \to \ul Y$ together with a factorization $f^*D' \to D \to X$.
\end{example}

\begin{definition}\label{definition:stratification_by_char_monoid}
    If $x \to X$ is a geometric point, we call \emph{characteristic monoid} of $X$ at $x$ the \'etale stalk $\o M_{X,x}$ of the characteristic sheaf $\o M_X=M_X/\ca O_X^\star$. Some small enough strict \'etale neighbourhood of $x$ in $X$ can be charted by $\o M_{X,x}$.
\end{definition}

\subsection{Logarithmic adjectives: integral, fine, saturated, etc}

As we have said, various classes of logarithmic schemes play distinguished roles in the theory, and the topic of this paper involves the relationships between these. We now introduce these formally. 

Let $M$ be a monoid. The category of monoid maps from $M$ to groups has an initial object, called the \emph{groupification} of $M$ and denoted by $M \to M^{\sf gp}$.

\begin{definition}
    Let $M$ be a monoid.
    \begin{itemize}
        \item We say $M$ is \emph{sharp} if the unit group $M^\times$ is $\{0\}$.
        \item We say $M$ is \emph{integral} if $M \to M^{\sf gp}$ is injective.
        \item If $M$ is integral, we say it is \emph{saturated} if for all $n\in\NN\backslash \{0\}$ and $m\in M^{\sf gp}$ such that $nm$ is in $M$, we also have $m\in M$.
        \item We say $M$ is \emph{fine and saturated}, abbreviated \emph{\sf fs}, if it is integral, saturated and finitely generated.
    \end{itemize}
    The category of monoid maps $M \to N$ where $N$ is sharp (resp. integral, resp. saturated) has an initial object $M \to \o M$ (resp. $M \to M^{\sf int}$, resp. $M \to M^{\sf sat}$). We call it the \emph{sharpening}, resp. \emph{integralization}, resp. \emph{integral saturation} of $M$. More explicitly, $\o M$ is the quotient of $M$ by its units, $M^{\sf int}$ is the image of $M$ in $M^{\sf gp}$, and $M^{\sf sat}$ is the intersection of $M^{\sf gp}$ and $\bb Q M^{\sf int}$ in $\bb Q M^{\sf gp}$. We have natural factorizations
    \[
    M \twoheadrightarrow M^{\sf int} \hra M^{\sf sat} \hra M^{\sf gp}.
    \]
    We extend these definitions, notations and remarks to sheaves of monoids in the obvious way.
\end{definition}

To clarify the definitions, consider a monoid $M$ and the associated scheme $\Spec \ZZ[M]$. We have already explained it is a binomial scheme. The condition that $M$ is sharp corresponds to this scheme not having diagonalizable groups as factors. If $M$ is sharp and integral, the scheme is a toric variety with dense torus $\on{Spec} \ZZ[M^{\sf gp}]$. If $M$ is sharp and saturated, this toric variety is normal.

\begin{definition}
    Let $X$ be a log scheme. We say that $X$ is \emph{integral} (resp. \emph{fine and saturated} or \emph{fs}) if it admits \'etale-local charts by monoids which are integral (resp. fs). We write $\LSchint$, resp. $\LSchfs$ for the full subcategory of $\LSch$ whose objects are integral, resp. fs log schemes.
\end{definition}

\begin{proposition}
    The inclusions $\LSchint \to \LSch$ and $\LSchfs \to \LSch$ admit right adjoints, which we respectively call \emph{integralization} and \emph{fine saturation}. We write $X^{\sf int}$ for the integralization of a log scheme $X$, and $X^{\sf fs}$ for its fine saturation.
\end{proposition}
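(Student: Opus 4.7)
The plan is to construct each right adjoint \'etale-locally by applying the monoid-level operations $P\mapsto P^{\sf int}$ and $P\mapsto P^{\sf fs}$ to charting monoids, and then to check that the resulting local constructions are independent of the chart and glue. This mirrors the standard template for producing adjoints in log geometry.

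First I would handle the case of a global chart $\theta\colon X\to\Spec\ZZ[P]$ by a finitely generated monoid $P$. The universal maps $P\to P^{\sf int}$ and $P\to P^{\sf fs}$ induce strict morphisms $\Spec\ZZ[P^{\sf int}]\to\Spec\ZZ[P]$ and $\Spec\ZZ[P^{\sf fs}]\to\Spec\ZZ[P]$, and I would define
\[
\ul{X^{\sf int}}:=\ul X\times_{\Spec\ZZ[P]}\Spec\ZZ[P^{\sf int}], \qquad \ul{X^{\sf fs}}:=\ul X\times_{\Spec\ZZ[P]}\Spec\ZZ[P^{\sf fs}],
\]
as schemes, each equipped with the log structure pulled back from its right-hand factor. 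The result lies in $\LSchint$ (respectively $\LSchfs$) by construction, and remains coherent since $P^{\sf int}$ is a quotient of $P$, while $P^{\sf fs}$ is finitely generated over $P^{\sf int}$ by Gordan's lemma.

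For the universal property in this local setup, given $Y\in\LSchint$ with a map $f\colon Y\to X$ in $\LSch$, the composite $Y\to X\to\Spec\ZZ[P]$ is encoded by a monoid map $P\to\Gamma(\ul Y,M_Y)$. Integrality of $M_Y$ forces this to factor uniquely through $P^{\sf int}$, producing a strict map $Y\to\Spec\ZZ[P^{\sf int}]$. Combining this with the factorization of $\ul Y\to\ul X$ through the scheme-theoretic fiber product yields the desired unique lift $Y\to X^{\sf int}$ in $\LSchint$. The argument for $X^{\sf fs}$ is identical, invoking saturatedness of $M_Y$ in addition to integrality.

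To globalize, I would appeal to the universal property just established: it guarantees that two chart-based local constructions are canonically isomorphic on overlaps and hence glue to give $X^{\sf int},X^{\sf fs}\in\LSch$ together with counit morphisms to $X$. The main obstacle I anticipate is extending the universal property from the chart-local setting to general $Y\in\LSchint$ without an a priori chart lift: the argument with $P\to\Gamma(\ul Y,M_Y)$ implicitly presumes that $f^\ast M_X$ on $Y$ inherits a chart by $P$, which is only automatic once a sufficiently fine strict \'etale cover of $Y$ is chosen. One handles this by checking the universal property stalkwise, using that integrality and saturatedness of a sheaf of monoids can be tested on stalks; the \'etale descent of the adjoint then follows, and the entire construction is functorial in $X$ by the uniqueness of the lift.
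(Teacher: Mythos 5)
Your approach matches the paper's: both define $X^{\sf int}$ (resp.\ $X^{\sf fs}$) chart-locally as the fiber product of a chart with $\Spec\ZZ[P^{\sf int}]\to\Spec\ZZ[P]$ (resp.\ $\Spec\ZZ[P^{\sf fs}]\to\Spec\ZZ[P]$), derive the universal property from the monoid-level adjunction, and globalize by chart-independence. Your write-up fills in details the paper leaves implicit (the verification that the lift factors through the fiber product, the stalkwise check); one harmless slip is calling the induced map $Y\to\Spec\ZZ[P^{\sf int}]$ ``strict,'' which is neither true in general nor needed.
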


\begin{proof}
    Let $X$ be a log scheme. If $X$ has a global chart $f \colon X \to \Spec \ZZ[P]$ by a finitely generated monoid $P$, define $X^{\sf int}$ as the fiber product of $f$ with $\Spec \ZZ[P^{\sf int}] \to \Spec \ZZ[P]$. The resulting $X^{\sf int}$ does not depend on the choice of chart, so we may extend this construction to log schemes with only local charts and get a functor $\LSch \to \LSchint$. This functor is right adjoint to the forgetful functor $\LSchint \to \LSch$ since the integralization functor on monoids is left adjoint to the forgetful functor. Fine saturation is similar and we omit it.
\end{proof}

\begin{remark}\label{remark:fiber_prod_vs_fs_fiber_prod}
The ``underlying scheme" functor $\LSch \to \cat{Sch}$ commutes with limits, but $\LSchfs \to \cat{Sch}$ does not. Limits in $\LSch$ are easy to construct: compute the limit of the underlying schemes in $\cat{Sch}$, then endow the resulting scheme with the appropriate colimit of sheaves of monoids. A limit in $\LSchfs$ can be obtained by computing the limit in $\LSch$, then applying the fine saturation functor. Since log geometers almost always work in $\LSchfs$, it is natural to want to describe explicitly the fine saturation functor, and how it acts on underlying schemes.
\end{remark}

\subsection{Presentations of fiber products}

Let $f \colon X \to Y$ be a morphism of log schemes, $y$ a geometric point of $\ul Y$ and $x$ a geometric point of $\ul X_y$. Restrict to small neighbourhoods of $x$ and $y$ so that $X,Y$ are globally charted by $Q:=\overline M_{X,x}$ and $P:=\overline M_{Y,y}$ respectively. There may not exist a morphism $P \to Q$ making the following diagram commute in $\cat{LSch}$.

    \begin{equation}\label{eqn:commutative_diag_charting_morphisms}
        \begin{tikzcd}
            X\arrow{r}\arrow{d} & \Spec \bb Z[Q] \arrow{d}\\
            Y\arrow{r} & \Spec \bb Z[P].
        \end{tikzcd} 
    \end{equation}
Indeed, given $p,q\in P$, the images of $p,q$ under $P \to M_Y(Y) \to M_X(X)$ may differ by some nontrivial $\lambda\in\ca O_{X}^\times(X)$. The natural map $P \to Q$ would then identify $p$ and $q$, thereby ``forgetting the nontriviality of the unit $\lambda$". This motivates the following definition.

\begin{definition}
    Let $f \colon X \to Y$ be a morphism of log schemes. A \emph{global chart} for $f$ is a commutative diagram \ref{eqn:commutative_diag_charting_morphisms} where the horizontal arrows are strict and $P,Q$ are finitely generated monoids.
\end{definition}

For $x$ a point in a logarithmic scheme $X$, the characteristic monoid $Q:=\overline M_{X,x}$ is a ``lower bound'' for the charting monoid near $x$, but we can produce other charting monoids by allowing them to have units. For example, $X:=\mathbb G^k_m$ with the trivial log structure admits a strict morphism to $\Spec \ZZ[\ZZ^k]$, where the $i$-th generator of $\bb Z^k$ maps to the $i$-th coordinate parameter of $X$. This flexibility is used as follows: 

\begin{lemma}
    Any morphism $f\colon X \to Y$ in $\cat{LSch}$ has local charts, i.e. for any geometric points $y \to \ul Y$ and $x \to \ul X_y$ there exist strict \'etale neighbourhoods $x \to U \to X$ and $y \to V \to Y$ such that $U \to V$ has a global chart.
\end{lemma}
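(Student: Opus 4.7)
The plan is to start with separately chosen charts for $X$ near $x$ and for $Y$ near $y$, and then to enlarge the chart on $X$ so that it receives a compatible monoid map from the chart on $Y$. By definition of log scheme there exist strict \'etale neighbourhoods $x\to U_0\to X$ and $y\to V\to Y$ equipped with charts $\alpha_0\colon Q_0\to M_X|_{U_0}$ and $\beta\colon P\to M_Y|_V$ with $Q_0$ and $P$ finitely generated. Replacing $U_0$ by $U_0\times_Y V$ (and picking a geometric point above $x$, which exists because $V\to Y$ is \'etale and $x$ lies above $y$), I may assume that $U_0\to Y$ factors through $V$. The composition $\beta'\colon P\xrightarrow{\beta}M_Y|_V\to M_X|_{U_0}$ is then a prelog structure on $U_0$ which need not factor through $\alpha_0$, precisely because a lift of $\beta'(p)$ through $\alpha_0$ is determined only up to the $\mathcal O^\times$-ambiguity discussed before the statement.

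To compensate for this ambiguity I enlarge the chart on $X$. Set $Q:=Q_0\oplus P$ with map $\alpha\colon Q\to M_X|_{U_0}$ defined by $(q,p)\mapsto \alpha_0(q)+\beta'(p)$. Because $\alpha_0$ is a chart, the induced map $Q_0\to \o M_X$ is surjective on a strict \'etale neighbourhood of $x$, so each $\beta'(p)$ can be written as $\alpha_0(q_p)\cdot u_p$ for some $q_p\in Q_0$ and $u_p\in\mathcal O^\times$ after a further shrinking. In particular the generators coming from $P$ lie, up to units, already in the image of $\alpha_0$, so adjoining them does not change the associated log structure: at the stalk at $x$ the sharpenings of $Q$ and $Q_0$ agree, and constructibility of the characteristic sheaf (cf.~\ref{example:global_chart}) propagates this equality to a strict \'etale neighbourhood $U\subseteq U_0$ of $x$. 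Thus $\alpha\colon Q\to M_X|_U$ is a chart of $X$ by the finitely generated monoid $Q$.

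Finally, the inclusion $\iota\colon P\hookrightarrow Q$ into the second summand induces $\Spec\bb Z[Q]\to \Spec\bb Z[P]$ and fits into a commutative diagram of the form \ref{eqn:commutative_diag_charting_morphisms}: indeed, the composition $P\xrightarrow{\iota}Q\xrightarrow{\alpha}M_X|_U$ is equal to $\beta'$ by construction, which is exactly $P\xrightarrow{\beta}M_Y|_V\to M_X|_U$. This yields the required global chart for $f|_U\colon U\to V$.

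The main technical point is the preservation of the chart property under the direct-sum enlargement in the second paragraph; the rest is organisational. The reason the direct sum is the right construction is that any attempt to identify $P$ with a sub-monoid of $Q_0$ on the nose would force the unit discrepancies $u_p$ to become trivial and introduce spurious relations, while the direct sum $Q_0\oplus P$ is free to absorb those units into its own unit group without altering the log structure it presents.
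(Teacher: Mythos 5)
There is a genuine gap in the second paragraph: the claim that $Q := Q_0 \oplus P$ is still a chart is false in general, and the statement ``at the stalk at $x$ the sharpenings of $Q$ and $Q_0$ agree'' does not hold. Consider the following counterexample. Let $\ul X = \ul Y = \Spec k$ with $M_X = M_Y = \NN \oplus k^\times$, and let $f \colon X \to Y$ be the identity on underlying schemes with $f^\flat \colon M_Y \to M_X$ given by $(a,u) \mapsto (a, \lambda^a u)$ for some fixed $\lambda \in k^\times$. Take $P = Q_0 = \NN$ with the obvious charts $1 \mapsto (1,1)$. Then $\beta'(1) = (1,\lambda) = \alpha_0(1) + \lambda$, so indeed $\beta'(p)$ lies in the image of $\alpha_0$ up to units, as you observe. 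But $Q = Q_0 \oplus P = \NN^2$ with $\alpha(a,b) = (a+b, \lambda^b)$ has $\alpha^{-1}(\ca O_X^\times) = \{(0,0)\}$, so the induced map $Q/\alpha^{-1}(\ca O_X^\times) = \NN^2 \to \o M_X = \NN$ is not injective, and the associated log structure $Q^a$ is strictly larger than $M_X$. The map $\alpha$ is not a chart.

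The source of the error is that ``$\beta'(p)$ is in the image of $\alpha_0$ up to units'' does not make the elements $(0,p)$ and $(q_p,0)$ of $Q_0 \oplus P$ congruent modulo $\alpha^{-1}(\ca O_X^\times)$: such a congruence $(0,p) + u = (q_p,0) + u'$ with $u,u' \in \alpha^{-1}(\ca O_X^\times)$ would force $q_p = 0$ since the first coordinate of $u,u'$ must vanish. In other words, freely adjoining a non-group monoid to a chart can strictly enlarge the characteristic sheaf even if the new generators are redundant up to units. The paper's proof avoids this by adjoining not $P$ itself but a finitely generated subgroup $U \subset \ca O_{X,x}^\times$ recording the unit discrepancies $u_p$, setting $Q := Q_0 \oplus U$. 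Since $U$ is a group mapping into $\ca O_X^\times$, one has $\alpha^{-1}(\ca O_X^\times) = \alpha_0^{-1}(\ca O_X^\times) \oplus U$, so $Q/\alpha^{-1}(\ca O_X^\times) = Q_0/\alpha_0^{-1}(\ca O_X^\times)$ and the chart property is preserved; the composite $P \to M_{X,x}$ then genuinely factors through $Q_0 \oplus U$. Your closing intuition that ``the direct sum is free to absorb those units into its own unit group'' is the right instinct, but for it to work you must adjoin a group (the group generated by the $u_p$), not the full monoid $P$.
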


\begin{proof}
    Shrinking $X$ and $Y$ we may assume that there are global charts of $X,Y$ by $Q^0:=\o M_{X,x}$ and $P:=\o M_{Y,y}$ respectively. These charts correspond to sections of the quotients $M_{Y,y} \to P$ and $\o M_{X,x} \to Q^0$. The diagram
    \begin{equation}
        \begin{tikzcd}
            P \arrow[r] \arrow[d] & Q^0 \arrow{d}\\
            M_{Y,y} \arrow[r] & M_{X,x}
        \end{tikzcd} 
    \end{equation}
    may not commute, but it commutes up to units. Thus, there is a finitely generated subgroup $U\subset \ca O_{X,x}^\times$ such that $\on{Im}(P \to M_{Y,y} \to M_{X,x}) + U$ contains $\on{Im}(Q^0 \to M_{X,x})$ as submonoids of $M_{X,x}$. The global chart of $X$ by $Q^0$ induces one by $Q^0 \oplus U$. By definition of $U$, the composite $P \to M_{Y,y} \to M_{X,x}$ factors through $Q \to M_{X,x}$, which proves the lemma.
\end{proof}

\begin{remark}
    As a consequence of the proof, we see that any morphism of log schemes $X \to Y$ has local charts \ref{eqn:commutative_diag_charting_morphisms} where $P$ is a stalk of $M_Y$ and $Q$ is the direct sum of a stalk of $M_X$ with a finitely generated abelian group. In other words, we can chart after adding torus factors. 
\end{remark}

We can now expand on \ref{remark:fiber_prod_vs_fs_fiber_prod}. Let

\begin{equation}
        \begin{tikzcd}
            & Y \arrow{d}\\
            Z\arrow{r} & X
        \end{tikzcd} 
    \end{equation}
be a diagram of fs log schemes with a global chart
\begin{equation}
        \begin{tikzcd}
            & \Spec \bb Z[Q] \arrow{d}\\
            \Spec \bb Z[R]\arrow{r} & \Spec \bb Z[P].
        \end{tikzcd} 
    \end{equation}

Then, the fiber product $Y\times_X Z$ in $\cat{LSch}$ has a global chart by the coproduct $M:=Q\oplus_P R$, which may not be fs (any finitely presented monoid arises as a coproduct of \emph{free} ones). The fiber product in $\LSchfs$ has a global chart by $M^{\sf fs}$. Presentations for $P,Q,R$ naturally yield a presentation for $M$, so all we have to do to compute the fiber product in $\LSchfs$ is find an algorithm to compute $M^{\sf fs}$ from a presentation of $M$.

\section{Faces and strata of logarithmic schemes}

We begin by stating the objective. Let $X$ be a coherent logarithmic scheme. The integralization $X^{\sf int}$ is a fine logarithmic scheme that comes equipped with a closed immersion 
\[
X^{\sf int}\hookrightarrow X.
\]
This is the best approximation to $X$ in fine logarithmic schemes, but it loses information about $X$. We would like to ``stratify'' $X$ by its failure to be fine, such that each stratum is fine. Moreover, we would like these to form a collection that jointly surject onto $X$, thereby recovering some of the lost information about $X$. These will precisely be the fine faces of $X$.

\subsection{Pointed monoids, prime ideals} We use the notion of a pointed monoid, which has made prior appearances in logarithmic geometry~\cite{HMU}. 

\begin{definition}
    Let $\cat{Mon^0}$ be the category of pointed monoids where the distinguished element (which we will usually call $\infty$) is absorbing and different from the neutral element. There is a forgetful functor $\on{log}\colon \cat{Ring} \to \cat{Mon^0}$ mapping a ring to its underlying multiplicative pointed monoid, sending $0$ to $\infty$. 
\end{definition}

\begin{lemma}
    The forgetful functors $\cat{Ring} \to \cat{Mon}^0 \to \cat{Mon}$ admit left adjoints. The left adjoint $\cat{Mon} \to \cat{Mon}^0$ formally adjoins the absorbing distinguished point, taking $M$ to $M \cup \{\infty\}$. We will (suggestively) write $M \mapsto \bb Z[M]$ for both the left adjoint $\cat{Mon}^0 \to \cat{Ring}$ and the composite $\cat{Mon} \to \cat{Ring}$. 
\end{lemma}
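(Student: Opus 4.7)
The plan is to construct both left adjoints explicitly and verify the universal properties by routine but careful bookkeeping, then unpack the notation.

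For the left adjoint $L\colon \cat{Mon}\to \cat{Mon}^0$, I would set $L(M)\colonequals M\sqcup\{\infty\}$ with the multiplication extending that of $M$ by declaring $\infty\cdot x=x\cdot\infty=\infty$ for every $x\in L(M)$, and distinguished element $\infty$. Since $\infty\neq 1$ by construction, this really lies in $\cat{Mon}^0$. A morphism $f\colon M\to M'$ goes to the unique $L(f)$ restricting to $f$ and sending $\infty$ to $\infty$; functoriality is immediate. For the adjunction, given a pointed monoid $(N,\infty_N)$, any pointed monoid map $\phi\colon L(M)\to N$ must send the distinguished element $\infty$ to $\infty_N$, so it is determined by $\phi|_M\colon M\to N$, which is an ordinary monoid map. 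Conversely, every monoid map $\psi\colon M\to N$ extends uniquely by $\psi(\infty)\colonequals\infty_N$; one only has to check multiplicativity on products involving $\infty$, which holds because $\infty_N$ is absorbing in $N$. This produces the natural bijection $\Hom_{\cat{Mon}^0}(L(M),N)\cong \Hom_{\cat{Mon}}(M,UN)$.

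For the left adjoint $\cat{Mon}^0\to \cat{Ring}$, I would form, for a pointed monoid $(M,\infty)$, the free $\bb Z$-module $\bb Z^{(M)}$ with basis $M$, equipped with the $\bb Z$-bilinear multiplication extending the monoid operation of $M$, and then quotient by the ideal generated by the basis element $\infty$. Call the result $R(M,\infty)$. A ring map $R(M,\infty)\to S$ restricts to a multiplicative map $M\to S$ sending $\infty$ to $0$, i.e.\ a pointed monoid map $(M,\infty)\to (S,0)$ where $(S,0)$ is the underlying pointed multiplicative monoid of $S$ via the functor $\on{log}$. Conversely, a pointed monoid map $(M,\infty)\to (S,0)$ extends $\bb Z$-linearly to a ring map $\bb Z^{(M)}\to S$; multiplicativity on basis elements implies ring multiplicativity, and the condition on $\infty$ ensures the extension factors through $R(M,\infty)$. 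This gives the adjunction $\Hom_{\cat{Ring}}(R(M,\infty),S)\cong \Hom_{\cat{Mon}^0}((M,\infty),\on{log}S)$.

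To justify the uniform notation $\bb Z[M]$, I would check that the composite left adjoint $\cat{Mon}\to\cat{Ring}$ agrees with the classical monoid algebra. Starting from $M\in\cat{Mon}$, we have $L(M)=M\sqcup\{\infty\}$, and $\bb Z^{(L(M))}=\bb Z^{(M)}\oplus \bb Z\cdot\infty$ as $\bb Z$-modules; quotienting by the ideal $(\infty)$ kills exactly the $\bb Z\cdot\infty$ summand and leaves $\bb Z^{(M)}$ with its standard monoid-algebra multiplication. Thus $R(L(M))$ is canonically the usual $\bb Z[M]$, and the composite adjunction recovers the classical universal property of the monoid algebra.

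There is no real obstacle; the only point that requires any care is the asymmetry in the role of the distinguished element, namely that morphisms in $\cat{Mon}^0$ are required to preserve the absorbing point, so one must check both that $L$ is forced to send $\infty$ to $\infty$ (automatic from the definition of morphisms) and that the ring left adjoint must impose the relation $\infty=0$ (so that the adjunction lands in pointed monoid maps to $(S,0)$, not just multiplicative maps to $S$).
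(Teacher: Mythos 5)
Your proof is correct and takes essentially the same approach as the paper's (which is much terser): construct $M\mapsto M\cup\{\infty\}$ and check the hom-set bijection forced by $\infty\mapsto\infty$, then build the ring left adjoint as the monoid algebra modulo the relation $\infty=0$. The extra verification that the composite recovers the classical $\bb Z[M]$ is a useful sanity check that the paper leaves implicit.
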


\begin{proof}
    For any monoid $M$ and pointed monoid $N$, we have
    \[
    \on{Hom}_{\cat{Mon}}(M,N)=\on{Hom}_{\cat{Mon}^0}(M\cup\{\infty\},N)
    \]
    since maps in $\cat{Mon}^0$ must send $\infty$ to $\infty$. The left adjoint $\cat{Mon} \to \cat{Ring}$ is the classical ``free ring over a monoid" construction, and $\cat{Mon}^0 \to \cat{Ring}$ is obtained analogously ($\infty$ must map to $0$).
\end{proof}

\begin{definition}\label{definition:rays_faces_etc_of_monoids}
    Let $P$ be a monoid. A \emph{face} of $P$ is a submonoid $Q$ such that any two elements of $P$ whose sum lies in $Q$ lie in $Q$. An \emph{ideal} of $P$ is a subset $I$ such that $I+P\subset I$. The ideal $I$ is \emph{proper} if $I\neq P$; and \emph{prime} if it is proper and for any $a,b$ in $P$ with $a+b\in I$ we have either $a\in I$ or $b\in I$. We define faces, ideals and primes of pointed monoids analogously.\footnote{Faces must be sub-pointed monoids, so they must contain $\infty$}
\end{definition}

\begin{remark}\label{remark:faces_primes_of_monoid_are_faces_primes_of_pointification}
A subset of a monoid is a prime ideal if and only if its complement is a face. If $P$ is a monoid, then faces, ideals and primes of $P$ are in natural bijection with faces, ideals and primes of $P\cup\{\infty\}$ via the map taking a subset $Q$ to $Q\cup\{\infty\}$.
\end{remark}

\begin{definition}\label{definition:quotients_of_pointed_monoids_by_ideals}
    Let $P$ be a pointed monoid and $I\subset P$ an ideal. We call \emph{quotient of $P$ by $I$} and denote by $P/I$ the set-theoretic quotient of $P$ by the relations identifying all elements of $I$ and nothing else.\footnote{We warn the reader that this is not to be confused with the quotient of a monoid by a subgroup. Hopefully, whichever we mean will always be clear from context.} We endow $P/I$ with its pointed monoid structure inherited from $P$ (which exists since $I$ is an ideal). Observe that if we denote by $F$ the face $P\backslash I \cup \{\infty\}$, then the natural map $F \to P/I$ is an isomorphism.
\end{definition}

\begin{lemma}\label{lemma:quotients_of_pointed_monoids_by_ideals_commute_with_ringification}
    Let $P$ be a pointed monoid and $I\subset P$ an ideal. Then $\bb Z[P/I]=\bb Z[P]/I\bb Z[P]$.
\end{lemma}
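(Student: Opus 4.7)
The plan is to verify the equality by a Yoneda argument that exploits the adjunction $\bb Z[-] \dashv \on{log}$ between $\cat{Mon}^0$ and $\cat{Ring}$. For any test ring $R$, I will produce a natural bijection
\[
\on{Hom}_{\cat{Ring}}(\bb Z[P]/I\bb Z[P],\,R) \;\longleftrightarrow\; \on{Hom}_{\cat{Ring}}(\bb Z[P/I],\,R)
\]
by showing that both sides classify the same subset of pointed monoid maps $P \to \on{log}(R)$, namely those that send every element of $I$ to $\infty = 0 \in R$.

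On the left, ring maps $\bb Z[P]/I\bb Z[P] \to R$ are ring maps $\bb Z[P] \to R$ vanishing on $I\bb Z[P]$. Since $I\bb Z[P]$ is by definition the $\bb Z$-span of products $i \cdot r$ with $i \in I$ and $r \in \bb Z[P]$, and since each such product $i \cdot p$ for $p \in P$ is the image of $i + p \in I$ (using the ideal condition $I + P \subset I$), this ideal agrees with the $\bb Z$-span of $I \setminus \{\infty\}$ inside $\bb Z[P]$. Hence vanishing on $I\bb Z[P]$ is equivalent to killing each basis element of $I$, and by the adjunction this corresponds to pointed monoid maps $g \colon P \to \on{log}(R)$ with $g(I) = \{\infty\}$. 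On the right, ring maps $\bb Z[P/I] \to R$ correspond by the same adjunction to pointed monoid maps $P/I \to \on{log}(R)$, i.e.\ pointed monoid maps $g \colon P \to \on{log}(R)$ factoring through the set-theoretic quotient of \ref{definition:quotients_of_pointed_monoids_by_ideals}, which is the same condition. Matching the two and applying Yoneda gives the claimed isomorphism, and one checks naturality in $R$ is automatic from both constructions.

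The only step that is not pure unwinding is checking that a map $g \colon P \to \on{log}(R)$ with $g(I) = \{\infty\}$ really does descend to a \emph{pointed monoid} map $P/I \to \on{log}(R)$, i.e.\ that the induced set map respects the monoid operation. This is where the ideal property $I + P \subset I$ does the work: whenever $p + q \in I$, one has $g(p) + g(q) = g(p+q) = \infty$ by multiplicativity of $g$, regardless of whether $p$ or $q$ individually lies in $I$, and so the relations collapsing $I$ to a single class are compatible with addition. I expect this to be the main, mildly fiddly, point to handle; beyond it, the argument is a short chase of adjunctions, and the only discipline required is not to confuse the pointed monoid quotient $P/I$ (which collapses $I$ onto the distinguished point $\infty$) with a quotient of a monoid by a submonoid or a subgroup.
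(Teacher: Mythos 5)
Your proof is correct and takes essentially the same route as the paper: the paper's one-line proof characterizes both sides as initial among rings over $\bb Z[P]$ killing the image of $I$, which is exactly the universal-property / Yoneda argument you carry out in detail via the adjunction $\bb Z[-] \dashv \on{log}$. The extra unpacking you do (identifying $I\bb Z[P]$ with the $\bb Z$-span of $I\setminus\{\infty\}$, and checking that a pointed monoid map killing $I$ descends to $P/I$) is accurate, though the descent step is automatic once one notes that the definition already guarantees $P/I$ is a well-defined pointed monoid when $I$ is an ideal.
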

\begin{proof}
    Both sides are initial among rings over $\bb Z[P]$ in which the image of $I$ is zero.
\end{proof}

\begin{lemma}\label{lemma:quotient_of_pointed_monoid_by_prime_ideal_is_the_corresponding_face}
    Let $P$ be a monoid, $I\subset P$ a prime ideal, and $F=P\backslash I$ the corresponding face. Then, the natural map of pointed monoids $f\colon F\cup\{\infty\} \to P\cup\{\infty\}/(I\cup\{\infty\})$ is an isomorphism. In particular, $\bb Z[F]=\bb Z[P]/I$.
\end{lemma}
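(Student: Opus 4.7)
The plan is to prove the pointed-monoid isomorphism directly from unpacking the relevant definitions, and then to deduce the ring-level statement by invoking \ref{lemma:quotients_of_pointed_monoids_by_ideals_commute_with_ringification}.

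First I would use \ref{remark:faces_primes_of_monoid_are_faces_primes_of_pointification} to transfer primality of $I\subset P$ to primality of $I\cup\{\infty\}\subset P\cup\{\infty\}$, and to identify $F\cup\{\infty\}$ as the corresponding face. The contrapositive of primality says that $F=P\setminus I$ is closed under the addition of $P$; combined with the fact that $I$ proper forces $0\notin I$, this shows $F\cup\{\infty\}$ is a pointed submonoid of $P\cup\{\infty\}$ (it automatically contains both $0$ and the absorbing $\infty$).

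Next I would unpack \ref{definition:quotients_of_pointed_monoids_by_ideals}: the target $P\cup\{\infty\}/(I\cup\{\infty\})$ is the set-theoretic collapse of $I\cup\{\infty\}$ to a single point $[\infty]$, endowed with the pointed monoid structure inherited from $P\cup\{\infty\}$. The natural map $f$ sends $x\in F$ to its class $[x]$ and $\infty$ to $[\infty]$; it is bijective on underlying sets because $F$ and $I$ partition $P$. Checking that $f$ is a pointed monoid morphism reduces to verifying $[x]+[y]=[x+y]$ for $x,y\in F$, which is immediate from the closure established above (this is exactly where primality of $I$ is needed).

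For the ``in particular'' clause, I would apply \ref{lemma:quotients_of_pointed_monoids_by_ideals_commute_with_ringification} to rewrite
\[\bb Z[(P\cup\{\infty\})/(I\cup\{\infty\})]=\bb Z[P\cup\{\infty\}]/(I\cup\{\infty\})\bb Z[P\cup\{\infty\}],\]
then absorb the $\infty$-generator (which is already zero in $\bb Z[P\cup\{\infty\}]$) into the convention $\bb Z[M]=\bb Z[M\cup\{\infty\}]$ to identify the right-hand side with $\bb Z[P]/I\bb Z[P]=\bb Z[P]/I$. Composing with the isomorphism $f$ and the analogous identification $\bb Z[F\cup\{\infty\}]=\bb Z[F]$ yields the desired equality. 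I do not expect any genuine obstacle: the lemma is a concrete packaging of the preceding definition and lemma, and the only mild hazard is notational — keeping the pointification conventions on the monoid and ring sides consistently tracked.
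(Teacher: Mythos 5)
Your proof is correct and takes essentially the same approach as the paper: the paper's own proof simply cites \ref{remark:faces_primes_of_monoid_are_faces_primes_of_pointification} and the observation built into \ref{definition:quotients_of_pointed_monoids_by_ideals}, leaving the details (including the appeal to \ref{lemma:quotients_of_pointed_monoids_by_ideals_commute_with_ringification} for the ring-level clause) implicit, whereas you spell them out.
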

\begin{proof}
Keeping in mind \ref{remark:faces_primes_of_monoid_are_faces_primes_of_pointification}, this follows immediately from \ref{definition:quotients_of_pointed_monoids_by_ideals}.  
\end{proof}

\begin{lemma}
    A pointed monoid $M$ lies in the essential image of $\cat{Mon} \to \cat{Mon}^0$ if and only if the ideal $\{\infty\}\subset M$ is prime.
\end{lemma}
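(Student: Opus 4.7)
The plan is to unpack both sides of the equivalence in elementary terms and then check the two implications directly; there is no real obstacle here, only a bookkeeping check that the pointed monoid structures match up.

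First I would translate each side. By the construction of the left adjoint $\cat{Mon}\to\cat{Mon}^0$, being in the essential image means that $M$ is isomorphic to $N\cup\{\infty\}$ for some (ordinary) monoid $N$, where $\infty$ is freely adjoined as an absorbing element. On the other side, since $\infty$ is absorbing, $\{\infty\}$ is automatically an ideal of $M$, and it is proper because $0 \neq \infty$ by the definition of $\cat{Mon}^0$. Thus primality reduces to the condition: for all $a,b\in M$, if $a+b=\infty$ then $a=\infty$ or $b=\infty$.

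For the forward direction, suppose $M = N\cup\{\infty\}$ as a pointed monoid. If $a,b\in M\setminus\{\infty\} = N$, then $a+b$ is computed in $N$ (the inclusion $N\hookrightarrow M$ is a monoid map), and in particular $a+b\in N$, so $a+b\neq\infty$. This shows $\{\infty\}$ is prime.

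For the converse, suppose $\{\infty\}\subset M$ is prime. Set $N := M\setminus\{\infty\}$. Then $0\in N$ since $0\neq\infty$, and $N$ is closed under addition: if $a,b\in N$ but $a+b=\infty$, primality would force $a=\infty$ or $b=\infty$, contradicting $a,b\in N$. Hence $N$ is a submonoid of $M$, and the inclusion extends to a map of pointed monoids $N\cup\{\infty\}\to M$ sending the formally adjoined absorbing point to $\infty\in M$; this map is a bijection by construction and preserves addition (using that $\infty$ is absorbing on both sides), so it is an isomorphism. This exhibits $M$ as being in the essential image, completing the proof.
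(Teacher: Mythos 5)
Your proof is correct and takes essentially the same route as the paper: the paper observes in one line that both conditions are equivalent to $M\setminus\{\infty\}$ being closed under the monoid law, and your argument is just a careful unpacking of exactly that equivalence in both directions.
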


\begin{proof}
    Both conditions are equivalent to the subset $M\backslash\{\infty\}$ being stable under the monoid law.
\end{proof}

\begin{corollary}\label{corollary:preimage_of_infty_is_prime_ideal}
    Let $M,N$ be monoids and $f \colon M\cup\{\infty\} \to N\cup\{\infty\}$ a morphism in $\cat{Mon}^0$. Then the ideal $f^{-1}\{\infty\}\subset M\cup\{\infty\}$ is prime.
\end{corollary}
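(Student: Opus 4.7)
The plan is to verify the three defining properties of a prime ideal in turn, invoking the preceding lemma to handle primality itself. Write $I := f^{-1}\{\infty\} \subset M\cup\{\infty\}$.

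First I would check that $I$ is an ideal. For any $a \in I$ and any $b \in M\cup\{\infty\}$, the morphism $f$ is a pointed monoid map so $f(a+b) = f(a) + f(b) = \infty + f(b) = \infty$, using that $\infty$ is absorbing in $N\cup\{\infty\}$. Thus $a + b \in I$.

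Next I would check properness. Since $f$ is a morphism in $\cat{Mon}^0$, it preserves the neutral element, so $f(0) = 0$. By the definition of $\cat{Mon}^0$ (the distinguished point differs from the neutral element), $0 \neq \infty$ in $N\cup\{\infty\}$, hence $0 \notin I$ and $I$ is a proper ideal.

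Finally, for primality, I would appeal to the lemma immediately preceding the corollary: the pointed monoid $N\cup\{\infty\}$ lies in the essential image of $\cat{Mon} \to \cat{Mon}^0$, so the ideal $\{\infty\} \subset N\cup\{\infty\}$ is prime. If $a + b \in I$, then $f(a) + f(b) = \infty$, which by primality of $\{\infty\}$ forces $f(a) = \infty$ or $f(b) = \infty$, i.e.\ $a \in I$ or $b \in I$. There is no real obstacle here; the corollary is essentially the observation that preimages of prime ideals are prime, made possible precisely by the previous lemma identifying when $\{\infty\}$ itself is prime.
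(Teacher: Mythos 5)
Your proof is correct and matches the route the paper intends: the paper leaves the corollary without an explicit proof, but its placement immediately after the lemma characterizing when $\{\infty\}$ is prime signals exactly the argument you give. The three checks (ideal via absorption, properness via $f(0)=0\neq\infty$, primality via the lemma applied to $N\cup\{\infty\}$) are all valid and complete.
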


\subsection{Faces of log schemes}\label{section: faces}

We come to the main construction of the paper. 

\begin{definition}
    Let $X,Y$ be log schemes. A \emph{pointed morphism} $X \to Y$ is a pair consisting of a morphism of underlying schemes $\ul f\colon \ul X \to \ul Y$ and a morphism of sheaves of pointed monoids $(\ul f^*M_Y)\cup\{\infty\} \to M_X \cup\{\infty\}$. We denote by $\LSchpointed$ the category of log schemes with pointed morphisms. The natural functor $\LSch \to \LSchpointed$ is faithful and essentially surjective (but not full) since the natural functor from sheaves of monoids to sheaves of pointed monoids on $\ul X$ is faithful.
\end{definition}

\begin{construction}\label{construction:sheaf_of_faces_of_logsch}
    Let $X$ be a locally Noetherian log scheme. We construct log schemes $F$, the \emph{faces of $X$}, together with pointed monoid maps $f \colon F \to X$, the \emph{face maps}, as follows. The underlying scheme maps $\ul f$ will be closed immersions. First, assume $X$ has a global chart by a monoid $Q$. Let $p$ be a prime ideal of $Q$ and $R$ be the face $Q\backslash p$. The closed immersion
    \[
    \Spec \left(\bb Z[Q]/(p)\right) \to \Spec \bb Z[Q]
    \]
    pulls back to a closed subscheme $\ul Z^p \to X$. By \ref{lemma:quotient_of_pointed_monoid_by_prime_ideal_is_the_corresponding_face} we have $\Spec \left(\bb Z[Q]/(p)\right)=\Spec \bb Z[R]$. Therefore, we may upgrade $\ul Z^p$ to a log scheme $Z^p$ with a global chart by $R$. We define the faces of $X$ to be the connected components of the $Z^p$ (which do not depend on the choice of chart). If $T \to X$ is a strict \'etale map, then faces of $X$ pull back to locally finite unions of disjoint faces of $T$ by Noetherianity. This gives a sheaf $\ca F_X^{\sf pre} \colon$
    \begin{align*}
        (X_\et)^{\sf op} & \to \cat{LSch} \\
        T & \mapsto \coprod_{R} \left\{\text{unions of disjoint faces of $T$ charted by }\Spec \bb Z[R] \right\},
    \end{align*}
    where $R$ ranges through the faces of $Q$. The faces of $T$ are precisely those sections of $\ca F^{\sf pre}$ which are connected. Now, we drop the assumption that $X$ has a global chart. Still, we may find some strict \'etale cover $T \to X$ which has one. The sheaf $\ca F_T^{\sf pre}$ extends uniquely to a sheaf $$\ca F_X^{\sf pre} \colon (X_\et)^{\sf op} \to \cat{LSch},$$ and we define the faces of $X$ to be the connected sections of $\ca F_X^{\sf pre}$.
\end{construction}

\begin{remark}
    Let $X$ be a locally Noetherian log scheme and $f \colon F \to X$ be a face. The pointed monoid map $f^{-1}(\overline M_X \cup \{\infty\}) \to \overline M_F \cup \{\infty\}$ induces a ring map $f^{-1} \bb Z[\overline M_X ] \to \bb Z[\overline M_F]$. The stalks of both are given by face inclusions.
\end{remark}

\begin{example}
Two extreme examples may help orient the reader. 

If $X$ is a point with log structure given by a monoid $P$, all the faces of $X$ are scheme-theoretically isomorphic to $X$, and the log structures they carry are given by the faces of $P$.

If $X=\bb A^n=\Spec \bb Z[e_1,...,e_n]$ with log structure given by the inclusion of the monomials into $\ca O_X$, then the prime ideals of $M_X(X)$ are of the form $(e_j)_{j\in J}$ where $J$ ranges through subsets of $\{1,...,n\}$. The face corresponding to $J$ is the scheme $\Spec \bb Z[e_i]_{i\notin J}$, with log structure given by its own monomials. Likewise, if $X$ is a regular scheme with log structure coming from a snc divisor $D$, the faces of $X$ are all the possible nonempty intersections of smooth components of $D$.
\end{example}

\begin{remark}
In the literature, it is common to call \emph{strata} of a log scheme $X$ the locally closed subschemes obtained by taking out (the underlying scheme of) all strict subfaces from (the underlying scheme of) a face. The characteristic monoid $\o M_X$ is locally constant on strata. A log scheme may have a lot more faces than it has strata, e.g. a log point only has one stratum.
\end{remark}

\begin{definition}
The {\it fine faces} of a coherent logarithmic scheme $X$ are the integralizations of the faces of $X$. 
\end{definition}

\begin{proposition}\label{proposition:integralization_of_faces_dominate_a_logsch}
    Let $X$ be a log scheme, and $\ca F$ the set of faces of $X$. The closed immersions $\{\ul F^\int \to \ul X\}_{F\in \ca F}$ are jointly surjective.
\end{proposition}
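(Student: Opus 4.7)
The plan is to exhibit, for each geometric point $x$ of $\ul X$, one face $F\in\ca F$ with $x\in\ul{F^\int}$. Joint surjectivity, closed immersions, and the construction of faces in \ref{construction:sheaf_of_faces_of_logsch} are all compatible with strict étale pullback, so I would first reduce to the case where $X$ admits a global chart $X \to \Spec\bb Z[Q]$ by a finitely generated monoid $Q$.

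Given such a chart and a point $x$, the natural candidate is the face attached to the prime ideal
\[
p_x := \{q\in Q : e^q \text{ vanishes in } \kappa(x)\},
\]
with complementary face $R_x := Q\setminus p_x$. A quick check, using that $\kappa(x)$ has no zero divisors, shows $p_x$ really is a prime ideal of $Q$ (it is closed under $Q$-addition, is proper since $e^0=1$, and primality is exactly the absence of zero divisors in $\kappa(x)$). The containment $x\in \ul{Z^{p_x}}$ is then tautological, since $\ul{Z^{p_x}}$ is cut out of $\ul X$ by the image of $p_x$ under the chart. I would take $F$ to be the connected component of $Z^{p_x}$ containing $x$.

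The heart of the argument, and the step I expect to be the main obstacle, is showing that $x$ survives the further passage to $\ul{F^\int}$. Unwinding the definition, $\ul{(Z^{p_x})^\int}$ is cut out of $\ul X$ by the ideal generated by $\{e^q : q\in p_x\}$ together with the binomial relations $e^{q_1}-e^{q_2}$, one for each identification forced in $R_x^\int$; such an identification is witnessed by some $s\in R_x$ with $q_1+s=q_2+s$ in $Q$, so that $e^{q_1}e^s=e^{q_2}e^s$ holds in $\ca O_X$. The crucial observation is that the witness $s$ must be taken in $R_x$, hence outside $p_x$: this forces $e^s(x)\neq 0$, i.e.\ $e^s$ is a unit in the stalk $\ca O_{X,x}$, and cancelling it yields $e^{q_1}(x)=e^{q_2}(x)$ in $\kappa(x)$. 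The entire defining ideal therefore lies in $\mathfrak m_x$, so $x\in\ul{(Z^{p_x})^\int}$; restricting to the connected component (integralization is compatible with disjoint unions) gives $x\in \ul{F^\int}$, as required.
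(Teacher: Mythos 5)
Your proposal is correct and follows essentially the same route as the paper: the prime ideal $p_x$ of monomials vanishing at $x$ is exactly what the paper isolates by taking the smallest face $\Spec\mathbb{Z}[R]$ containing $f(x)$, and your explicit cancellation of the unit $e^s$ unwinds the paper's observation that $f(x)$ lies in the open subscheme $\Spec\mathbb{Z}[R^{\sf gp}]\subset\Spec\mathbb{Z}[R^{\sf int}]$.
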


\begin{proof}
    Since faces pull back to disjoint unions of faces along \'etale coverings, we may assume $X$ has a global chart $f \colon X \to \Spec \bb Z[Q]$. Let $x\in X$ be a point, and let $\Spec \bb Z[R]$ be the smallest face of $\Spec \bb Z[Q]$ containing $f(x)$. By the minimality of $R$, we know $f(x)$ is contained in the open subscheme $\Spec \bb Z[R^\gp]$ of $\Spec \bb Z[R]$. A fortiori, $f(x)$ is in $\Spec \bb Z[R^\int]$. Therefore, $x$ is in the image of $\ul F^\int \to \ul F \to \ul X$, where $F$ is the connected component of $X\times_{\Spec \bb Z[Q]} \Spec \bb Z[R]$ containing $x$.
\end{proof}

\begin{proposition}
    Let $f \colon X \to Y$ be a morphism in $\LSchpointed$, with $Y$ locally Noetherian. Then $f$ factors as $X \to F \to Y$, where $X \to F$ is a morphism in $\LSch$ and $F \to Y$ is a disjoint union of face maps.
\end{proposition}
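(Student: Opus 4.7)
The plan is to use the pointed monoid sheaf morphism to produce a canonical finite clopen decomposition of $\ul X$, with each clopen piece mapping to a specific face of $Y$. First I would reduce to the case that $Y$ admits a global chart $Y\to\Spec\bb Z[Q]$ by a finitely generated monoid $Q$; this is legitimate since both the construction of faces (\ref{construction:sheaf_of_faces_of_logsch}) and the desired factorization are strict-\'etale local on $Y$.

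The key technical observation is as follows. For each $q\in Q$, the image $\phi(\ul f^*q)$ is a global section of the sheaf of pointed monoids $M_X\cup\{\infty\}$, whose stalks are $M_{X,x}\sqcup\{\infty\}$. Because no section of $M_X$ over any open coincides stalkwise with the distinguished global element $\infty$, any section of $M_X\cup\{\infty\}$ over an open $U$ decomposes along a clopen partition of $U$ into a part where it equals $\infty$ and a part where it is an honest section of $M_X$. Consequently
\[
U_q \,:=\, \{x\in\ul X\mid \phi(\ul f^*q)_x = \infty\}
\]
is clopen in $\ul X$. By \ref{corollary:preimage_of_infty_is_prime_ideal}, at each geometric point $x$ the subset $p(x):=\{q\in Q\mid x\in U_q\}$ is a prime ideal of $Q$. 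Since $Q$ is finitely generated it has only finitely many faces (equivalently, primes), and each is determined by its intersection with a chosen generating set, so $x\mapsto p(x)$ is locally constant and takes finitely many values. This yields a finite clopen decomposition $\ul X = \bigsqcup_p \ul X_p$ indexed by the primes of $Q$ that arise.

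For each such prime $p$, let $F_p$ be the face of $Y$ attached to $p$ (\ref{lemma:quotient_of_pointed_monoid_by_prime_ideal_is_the_corresponding_face}): the closed subscheme $V(p)\subset\ul Y$ charted by $Q\setminus p$. Compatibility of $\phi$ with the induced extensions $M_X\cup\{\infty\}\to\ca O_X$ and $M_Y\cup\{\infty\}\to\ca O_Y$ (sending $\infty\mapsto 0$) forces $\ul f^*q=0$ on $\ul X_p$ for every $q\in p$, whence $\ul f(\ul X_p)\subseteq\ul F_p$. On log structures, the restriction of $\phi$ to the face $Q\setminus p$ lands in $M_{X_p}$ rather than $\infty$ by the very definition of $\ul X_p$, and this is precisely the data of a non-pointed log morphism $X_p\to F_p$ whose composition with the canonical pointed face map $F_p\to Y$ recovers $\phi|_{X_p}$. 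Setting $F := \bigsqcup_p F_p$ produces the sought factorization $X\to F\to Y$.

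The step to treat with the greatest care is the clopen claim: one must verify that the sheafification of the presheaf $U\mapsto M_X(U)\sqcup\{\infty\}$ admits no section that interpolates between $\infty$-values and $M_X$-values within a connected open. This follows from the fact that $\infty$ and elements of $M_X$ never agree in any stalk, so gluing two local sections of different types forces their overlap to be empty. Once this is settled, the extension to the case where $Y$ only has local charts is a routine gluing argument using the functoriality of the face sheaf of \ref{construction:sheaf_of_faces_of_logsch}.
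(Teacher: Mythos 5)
Your proof is correct and follows essentially the same strategy as the paper's: take the preimage of $\infty$ under the pointed morphism of monoid sheaves, invoke \ref{corollary:preimage_of_infty_is_prime_ideal} to identify it stalkwise with a prime ideal, and let the corresponding faces of $Y$ assemble into $F$. You spell out the clopen decomposition of $\ul X$ by the locally constant assignment of primes — a detail the paper relies on implicitly when it speaks of a ``disjoint union'' — and you reduce to a global chart and glue at the end, whereas the paper works directly with the global sheaf of ideals $\mathcal I = \phi^{-1}\{\infty\}$ and only passes to charts to verify that $F \to Y$ is a union of face maps; this difference is cosmetic rather than substantive.
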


\begin{proof}
    The preimage of $\{\infty\}$ along $(f^*M_Y)\cup\{\infty\} \to M_X\cup\{\infty\}$ is a sheaf of ideals $\ca I\subset M_X\cup\{\infty\}$, cutting out a pointed log scheme map $F \to Y$ through which $f$ factors. Call $g$ the morphism $X \to F$. The map $(g^*M_F)\cup\{\infty\} \to M_X\cup\{\infty\}$ sends $g^*M_F$ to $M_X$, so $g$ is a morphism in $\cat{LSch}$. We claim that $F \to Y$ is a disjoint union of faces. This may be checked \'etale locally on $Y$, so we assume $Y$ has a global chart by a monoid $Q$. The global sections of $\ca I$ give a prime ideal of $Q\cup\{\infty\}$ by \ref{corollary:preimage_of_infty_is_prime_ideal}, so the claim follows.
\end{proof}

\section{Examples from toric geometry and GW theory}\label{section: examples}

\subsection{Fibre products of toric varieties}\label{section:toric_fibreprod}

In this section, we specialize the discussion of faces of logarithmic schemes to the toric setting. More specifically, we examine a common and concrete source of coherent, non-fine logarithmic schemes -- fiber products of toric varieties in the category of schemes. In this case, everything in the previous section can be explained using fiber toric geometry, and specifically, fiber products of compactified fans. As we have alluded to before, the scheme theoretic fiber product of toric morphisms often lands outside the category of toric varieties. 

\subsubsection{Scheme-theoretic vs toric fibre product}\label{subsection:toric_fiber_products} The category of normal toric varieties over a field $k$, with equivariant morphisms, is well-known to be equivalent to the category of fans, i.e. complexes of strictly convex rational polyhedral cones, embedded in a latticed $\bb R$-vector space. It is slightly less well-known that the category of fans, or equivalently of toric varieties, admits fiber products, see~\cite{Msemistable} for a precise definition. We refer to it here as the {\it toric fiber product}. Fiber products of toric varieties in the categories of schemes and of toric varieties differ, as the following example illustrates.

\begin{example}
    Let $B\to\mathbb A^2$ be the blowup of $\mathbb A^2$ at the origin. Consider the cartesian square in $\cat{Sch}$:
    \[
    \begin{tikzcd}
        F\arrow{r}\arrow{d} & B \arrow{d}\\
        B\arrow{r} & \mathbb A^2.
    \end{tikzcd}
    \]
    The scheme $F$ certainly contains a copy of the dense torus $\mathbb G_m^2$. However, it also contains the square of the exceptional divisor, and so a copy of $\mathbb P^1\times\mathbb P^1$ that is disjoint from this $\mathbb G_m^2$. It follows that $F$ is not a toric variety. The toric fiber product is easy to compute: it is $B$ itself, as a special case of the following proposition.
\end{example}

\begin{proposition}\label{proposition:toric_fiberprod_is_fsification_of_sch_fiberprod}
    Let $X\to B$ and $Y\to B$ be torus equivariant morphisms of toric varieties. Let $F$ denote the scheme-theoretic fiber product
    \[
    F:= X\times_B Y,
    \]
    and let $T$ denote the corresponding fiber product of dense tori of $X$ and $Y$, over the dense torus of $B$. The toric fiber product is the normalization $F^{tor}$ of the closure $\o F$ of $T$ in $F$.
\end{proposition}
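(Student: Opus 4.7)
The plan is to reduce to an affine computation and convert each piece of the statement into a statement about monoid algebras. First cover $B$ by an equivariant affine open, pull back to compatible equivariant affine opens of $X$ and of $Y$, and observe that all five objects in play --- the scheme-theoretic fiber product $F$, the torus $T$, its closure $\o F$, the normalization of $\o F$, and the toric fiber product (which on fans is a common refinement) --- commute with equivariant open immersions. It thus suffices to treat the affine toric case.

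Assume therefore that $X=\Spec k[P]$, $Y=\Spec k[Q]$, $B=\Spec k[R]$ for sharp fs monoids $P,Q,R$, and that the equivariant maps come from monoid maps $R\to P$ and $R\to Q$. Let $M:=P\oplus_R Q$ denote the pushout in monoids, so that $F=\Spec k[M]$. Since groupification is a left adjoint it commutes with pushouts, giving $M^{\sf gp}=P^{\sf gp}\oplus_{R^{\sf gp}} Q^{\sf gp}$ and hence $T=\Spec k[M^{\sf gp}]$. The open immersion $T\hookrightarrow F$ is induced by the localization $k[M]\to k[M^{\sf gp}]$; its scheme-theoretic image is cut out by the kernel of this map, and that image is exactly $k[M^{\sf int}]$, where $M^{\sf int}$ is the image of $M$ inside $M^{\sf gp}$. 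Hence $\o F=\Spec k[M^{\sf int}]$.

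It remains to identify the normalization of $\o F$ with the toric fiber product. On the monoid side, \ref{remark:fiber_prod_vs_fs_fiber_prod} identifies the toric fiber product with the fs pushout, whose underlying affine scheme is $\Spec k[M^{\sf sat}]$. On the other side, the classical fact that the integral closure of a semigroup algebra $k[N]$ inside $k[N^{\sf gp}]$ is $k[N^{\sf sat}]$ gives that the normalization of $\o F$ is also $\Spec k[M^{\sf sat}]$. Gluing the affine comparison over an equivariant cover of $B$ completes the proof.

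The main obstacle is torsion in $M^{\sf gp}$. When this torsion is nontrivial, $k[M^{\sf int}]$ is not a domain, and both $\o F$ and its normalization decompose into several components. In that setting the normalization must be interpreted componentwise (equivalently as the integral closure of $k[M^{\sf int}]$ in its total ring of fractions), and some care with the base field --- for instance requiring that $\mathrm{char}\,k$ be coprime to the torsion --- is needed to guarantee reducedness. All of this is formal once the affine identification $F^{tor}=\Spec k[M^{\sf sat}]=(\o F)^{\nu}$ is in hand.
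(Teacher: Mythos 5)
Your argument takes a genuinely different route from the paper's. The paper proves the proposition in four lines by a universal property argument: any torus-equivariant $Z\to F$ from a toric variety lands in $T$ on dense tori (hence factors through $\o F$) and, by normality of $Z$, through $F^{\sf tor}$; one then observes $F^{\sf tor}$ is toric. You instead reduce to the affine case and compute explicitly with monoid algebras, identifying $\o F$ with $\Spec k[M^{\sf int}]$ and $F^{\sf tor}$ with $\Spec k[M^{\sf sat}]$. The explicit approach has real value --- it makes the connection to \ref{remark:fiber_prod_vs_fs_fiber_prod} and the rest of the paper's machinery visible, and exposes the torsion subtlety that the paper's four-line proof is silent about. (Two minor cleanup points: pulling an affine open of $B$ back to $X$ does not generally produce a single affine open, so the cover of $F$ must be built from affine toric opens of $X$ and $Y$ mapping into affine toric opens of $B$; and \ref{remark:fiber_prod_vs_fs_fiber_prod} is a statement about log schemes, not about the fan-theoretic fiber product, so it cannot be cited to identify the toric fiber product with $\Spec k[M^{\sf sat}]$ --- that identification needs its own argument.)

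The genuine gap is in the torsion discussion, which you flag but then wave away: the claim that ``all of this is formal once $F^{\sf tor}=\Spec k[M^{\sf sat}]=(\o F)^\nu$ is in hand'' does not survive the presence of torsion. When $M^{\sf gp}$ has torsion, $T$ is a disconnected diagonalizable group, $\Spec k[M^{\sf sat}]$ is a disjoint union of several toric varieties, and only the component containing the identity of $T$ is the fan-theoretic fiber product. Concretely, take $X=Y=B=\bb A^1$ over $\bb C$, with both maps $z\mapsto z^2$. Then $F=\Spec \bb C[x,y]/(x^2-y^2)$, $T=\{a^2=b^2\}\subset\bb G_m^2$ has two components, $\o F=F$, and $F^{\sf tor}=\bb A^1\sqcup\bb A^1$; but the fiber product of fans is a single $\bb A^1$. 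So your affine formula computes the normalization of $\o F$ correctly, but this is not the toric fiber product: one must further pass to the identity component, equivalently quotient $M^{\sf sat}$ by its torsion units. The paper's universal-property proof quietly sidesteps this because the dense torus of any test variety $Z$ is connected, hence lands in the identity component of $T$; to make your argument watertight you would need to say the same thing, rather than treat it as a base-change or reducedness nuisance.
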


\begin{proof}
    Let $f \colon Z \to F$ be any torus equivariant morphism from a toric variety. Since the dense torus of $Z$ lands in $T$, $f$ must factor uniquely through $\o F$. Since $Z$ is normal, $f$ must factor uniquely through $F^{\sf tor}$. The proposition follows, after observing that $F^{\sf tor}$ is a toric variety.
\end{proof}

The proposition tells us that the fiber product contains a ``toric component''. One can go further. 

\begin{theorem}\label{theorem:normalised_orbits_are_TVs}
    Let $X\to B$ and $Y\to B$ be as above and let $F$ be the fiber product. Then the normalization of every irreducible component of $F$, equipped with its reduced structure, is also a toric variety. 
\end{theorem}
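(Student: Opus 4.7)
The plan is to reduce to the affine setting and realize each irreducible component of $F_{\on{red}}$ as the image of a component of a saturated face of $F$.

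Since the statement is local on $F$ and toric varieties are covered by torus-invariant affines, I would first reduce to the case where $X=\Spec k[Q]$, $Y=\Spec k[R]$, and $B=\Spec k[P]$ are affine toric varieties charted by fs monoids, with the equivariant maps induced by monoid maps $\phi\colon P\to Q$ and $\psi\colon P\to R$. Then $F=\Spec k[M]$, where $M:=Q\oplus_P R$ is finitely generated but typically neither integral nor saturated; moreover $M^{\sf gp}$ can acquire torsion even when $P^{\sf gp}$, $Q^{\sf gp}$, and $R^{\sf gp}$ are torsion-free.

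Next, I would invoke the face construction of \ref{section: faces}. The faces of $F$ correspond to prime ideals $p$ of $M$ via $N_p:=M\setminus p$, with fs face $\Spec k[N_p^{\sf fs}]$. Since $N_p^{\sf fs}$ is fs, this is a normal affine scheme of finite type, decomposing as a finite disjoint union of normal affine toric varieties, one for each character of the torsion subgroup of $N_p^{\sf gp}$. By \ref{proposition:integralization_of_faces_dominate_a_logsch}, combined with the fact that saturation is a finite surjective morphism, the fs faces jointly surject onto $\ul F$.

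Given an irreducible component $C$ of $F_{\on{red}}$ with generic point $\eta_C$, I would use joint surjectivity to pick a face $N=N_p$ and an irreducible component $C'$ of $\Spec k[N^{\sf fs}]$ whose image in $\ul F$ contains $\eta_C$; I would choose $p$ maximal among such, making $N$ minimal. For this choice, the morphism $C'\to C$ is surjective, and a dimension count (using that integralization is a closed immersion and saturation is finite) shows it is birational. Since $C'$ is normal, it realizes the normalization $\tilde C$, which is therefore a toric variety.

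The main obstacle is controlling the interplay between the face structure and the torsion in $N^{\sf gp}$: distinct characters of the torsion can produce distinct components of $\Spec k[N^{\sf fs}]$, any of which could in principle map onto a given component of $F$. To resolve this I would appeal to the binomial primary decomposition of~\cite{EisenbudSturmfels}, which identifies the minimal primes of $k[M]$ with precisely the pairs (face of $M$, character of the torsion of its group completion) that index the irreducible components of the fs faces constructed above.
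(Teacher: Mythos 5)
Your approach takes a genuinely different route from the paper's. The paper proves this in one line by citing \cite[Theorem~6.1]{EisenbudSturmfels}: since $F$ is a binomial scheme (its affine pieces are $\Spec k[Q\oplus_P R]$), its minimal primes are binomial, a reduced prime binomial scheme is the closure of a translated subtorus (a possibly non-normal toric variety), and its normalization is toric. Your route instead tries to exhibit the normalization of each component as a component of a fine saturated face of $F$, which is more explicit and aligns nicely with the face machinery of \ref{section: faces}.

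The gap is the birationality step. ``A dimension count shows it is birational'' does not work: finiteness and surjectivity give $\dim C' = \dim C$ for free, and equality of dimensions cannot distinguish a birational map from a nontrivial finite cover. What you actually need is that the map is generically injective. The correct argument is that when $p$ is chosen maximal, $p$ is precisely the set of $m\in M$ whose associated monomial function vanishes at $\eta_C$, so $\eta_C$ lies in the principal open $\Spec k[N^{\sf gp}]\subset\Spec k[N]$; integralization and saturation are isomorphisms over this open, so the dense torus $T_C\subset C$ lifts isomorphically to $C'$, and $C'\to C$ is birational. Your concluding appeal to the Eisenbud--Sturmfels primary decomposition is also aimed at the wrong concern: distinct characters of the torsion subgroup of $N^{\sf gp}$ give components of $\Spec k[N^{\sf fs}]$ with disjoint dense tori, which therefore map to distinct irreducible closed subsets of $F$, so there is no collision to rule out. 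Once you invoke the full Eisenbud--Sturmfels structure theory anyway, you have essentially reproduced the paper's direct proof, which makes the preceding face argument superfluous; conversely, if you carry out the face argument correctly as above, the Eisenbud--Sturmfels citation becomes unnecessary.
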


\begin{proof}
    This is \cite[Theorem 6.1]{EisenbudSturmfels}, together with the observation that normal, prime, binomial schemes over a field are toric.
\end{proof}


The different toric varieties occurring in the theorem above are precisely the fine faces of the fiber product $X\times_B Y$, where the toric varieties are given their standard logarithmic structures and the fiber product is taken as a coherent logarithmic scheme. 

\subsubsection{Fans and compactifications}\label{section:fans_and_compactifications}

Every (normal) toric variety $X$ comes with a fan $\Sigma_X$ embedded in a vector space $ N_\mathbb{R}$ and we have noted above, the fiber product of toric varieties $X$ and $Y$ over a toric variety $B$ can be calculated -- in the category of toric varieties -- by the fiber product of fans. 

But we have seen above that the ordinary fiber product of a toric morphism is also combinatorial in nature. It raises a basic questions: {\it how does one practically compute the scheme theoretic fiber product of toric morphisms?} 

It turns out that the appropriate combinatorics is the {\it extended tropicalization} or {\it compactified fan} of a toric variety. This is a notion due independently to Kajiwara and Payne and we recall it here. A detailed treatment can be found in \cite{Payne2009Analytification}.

Unless specified otherwise, in this section $k$ is a field and $X$ is an affine toric variety $X=\Spec k[P]$ for some fs monoid $P$. We denote the dual monoid $\operatorname{Hom}(P,\bb N)$ by $P^\vee$, so that $N = P^{\vee,\sf gp}$ is the cocharacter lattice of $X$ and the cone associated to $X$ is $\sigma_X:=\Hom(P, \bb R_{\geq 0}) = P^\vee \otimes_{\bb N} \bb R_{\geq 0}$.

\begin{definition}
    Denote by $\mathbf R_{\geq 0}$ the topological monoid $\bb R_{\geq 0} \cup \{+\infty\}$. The \emph{extended cone} of $X$ is the pair $(N,\overline\sigma_X)$ where $\overline\sigma_X$ is the topological monoid $\Hom(P, \mathbf R_{\geq 0})$. We will often omit $N$ from the notation and call $\overline\sigma_X$ the extended cone of $X$.
\end{definition}

\begin{definition}\label{definition:origins_at_infinity}
    Let $x$ be a point of $\sigma$, and let $\kappa$ be the smallest face of $\sigma$ containing $x$. The point $\infty.x\in\overline\sigma$ corresponds to the map
    \begin{align*}
        M & \to \mathbf R_{\geq 0} \\
        f & \mapsto 0 \text{ if $f\in\kappa^\vee$} \\
        f & \mapsto \infty \text{ otherwise},
    \end{align*}
    so it only depends on $\kappa$. We call it the \emph{origin at $\kappa=\infty$} of $\overline\sigma$ and denote it by $\infty_\kappa$.
\end{definition}

We can express $\overline\sigma$ as the disjoint union of $\sigma$ and of some ``faces at infinity":

\begin{proposition}\label{proposition:extended_cones_are_unions_of_cones}
    We have
    \begin{equation}\label{eqn:decompose_basic_extended_cone_into_cones}
        \overline\sigma=\bigsqcup_{\kappa} \infty_\kappa + \sigma,
    \end{equation}
    where $\kappa$ ranges through the faces of $\sigma$. For each face $\kappa\subset\sigma$, the sub-semigroup $\infty_\kappa+\sigma$ of $\overline{\sigma}$ is a monoid with neutral element $\infty_\kappa$, and there is a canonical isomorphism of monoids $\infty_\kappa+\sigma=\sigma/(\bb R\kappa)$.
\end{proposition}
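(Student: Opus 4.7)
The plan is to attach to each $x \in \overline\sigma$ a face $\kappa(x)$ of $\sigma$ and show the resulting map $\overline\sigma \to \{\text{faces of }\sigma\}$ has fibers exactly $\infty_\kappa + \sigma$; the disjoint union \ref{eqn:decompose_basic_extended_cone_into_cones} and the monoid identification both follow from this.

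First I construct $\kappa(x)$. Given $x\colon P \to \mathbf R_{\geq 0}$, the set $I_x := x^{-1}(\infty)$ is a prime ideal of $P$: it is an ideal because $\infty$ is absorbing in $\mathbf R_{\geq 0}$, and primality follows because a sum in $\mathbb R_{\geq 0} \cup \{\infty\}$ equals $\infty$ only if some summand does. The complement $F_x := P \setminus I_x$ is therefore a face of $P$, and under the standard duality between faces of $P$ and faces of $\sigma$ (sending $\kappa$ to $\kappa^\vee := \{f \in P : \langle f, \kappa\rangle = 0\}$) we obtain the face $\kappa(x) \subseteq \sigma$ characterized by $\kappa(x)^\vee = F_x$. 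Directly from \ref{definition:origins_at_infinity}, adding a finite-valued $y \in \sigma$ to $\infty_\kappa$ does not change the locus where the value is infinite, so $\kappa(\infty_\kappa + y) = \kappa$ for every $y$; this already forces disjointness of the sets $\infty_\kappa + \sigma$ as $\kappa$ ranges over faces.

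Second I show the fiber of $\kappa(\cdot)$ over $\kappa$ is $\infty_\kappa + \sigma$. If $\kappa(x) = \kappa$ then $x$ restricts to a monoid map $x|_{\kappa^\vee}\colon \kappa^\vee \to \mathbb R_{\geq 0}$, which under the canonical identification $\operatorname{Hom}(\kappa^\vee, \mathbb R_{\geq 0}) = \sigma/\mathbb R\kappa$ is a point of the quotient cone. Since $\sigma \twoheadrightarrow \sigma/\mathbb R\kappa$ is surjective, I can lift it to some $y \in \sigma$. A pointwise check then yields $x = \infty_\kappa + y$: on $\kappa^\vee$ both sides equal $x|_{\kappa^\vee}$ (since $\infty_\kappa$ vanishes there and $y$ was chosen to match $x|_{\kappa^\vee}$), and on $P \setminus \kappa^\vee = I_x$ both equal $\infty$. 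This proves \ref{eqn:decompose_basic_extended_cone_into_cones}.

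Finally, for the monoid claim: the identities $\infty + \infty = \infty$ and $0 + 0 = 0$ in $\mathbf R_{\geq 0}$ show that $\infty_\kappa$ is idempotent and acts as the neutral element on every $\infty_\kappa + y$, making $\infty_\kappa + \sigma$ a submonoid of $\overline\sigma$. The additive surjection $\sigma \twoheadrightarrow \infty_\kappa + \sigma$ sending $y \mapsto \infty_\kappa + y$ identifies two points iff their restrictions to $\kappa^\vee$ agree, iff their difference lies in $\mathbb R\kappa$; this descends to the canonical isomorphism $\sigma/\mathbb R\kappa \xrightarrow{\sim} \infty_\kappa + \sigma$. The only piece of nonformal input is the identification $\operatorname{Hom}(\kappa^\vee, \mathbb R_{\geq 0}) = \sigma/\mathbb R\kappa$ for fs cones, a standard consequence of Gordan-type considerations under the hypothesis that $P$ is fs; the rest is bookkeeping with monoid maps into $\mathbf R_{\geq 0}$.
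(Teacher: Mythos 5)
Your proof is correct and takes essentially the same approach as the paper's: both parameterize the decomposition of $\overline\sigma$ by the prime ideal $x^{-1}(\infty)\subset P$ (equivalently, the dual face $\kappa$ of $\sigma$) and identify $\infty_\kappa+\sigma$ with $\sigma/\mathbb R\kappa$ via restriction to $\kappa^\vee$. One small terminological slip: $\infty_\kappa+\sigma$ is a sub-\emph{semigroup} of $\overline\sigma$, not a submonoid, since its neutral element $\infty_\kappa$ differs from that of $\overline\sigma$; but you correctly identify $\infty_\kappa$ as the neutral element, so the substance of the argument stands.
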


\begin{proof}
    Let $p$ be a point of $\overline\sigma$, which we may see as a map $\sigma^\vee \to \mathbf R_{\geq 0}$. The closure of $p^{-1}(\{+\infty\})$ in $\sigma^\vee$ is a face $\kappa^\vee$, whose dual $\kappa$ is the only face of $\sigma$ such that $p^{-1}(\{+\infty\})=\infty_\kappa^{-1}(\{+\infty\})$, i.e. such that $p$ is of the form $\infty_\kappa+q$ for some $q\in\sigma$. This proves that \ref{eqn:decompose_basic_extended_cone_into_cones} is a disjoint union. The canonical isomorphism $\infty_\kappa+\sigma=\sigma/(\bb R\kappa)$ follows from observing that $q$ is unique modulo $\bb R\kappa$.
\end{proof}

\begin{definition}
    With the notations of \ref{proposition:extended_cones_are_unions_of_cones}, we call $\infty_\kappa+\sigma=\sigma/(\bb R\kappa)$ the \emph{asymptotic cone of $\overline\sigma$ at $\kappa=\infty$}.
\end{definition}

\begin{example}\label{example:injections_of_cones_are_not_injections_of_extended_cones}
    The functor
    \begin{align}\label{eqn:compactification_of_cones_functor}
    \cat{Cones} & \to \cat{Monoids} \\
    \kappa & \mapsto \overline\kappa:=\kappa\otimes_{\bb R_{\geq 0}} \mathbf R_{\geq 0}
    \end{align}
    does not preserve injections, and does not commute with fiber products. For example, in $\bb Ru\oplus \bb Rv$ let $\sigma$ be the nonnegative orthant $\bb R_+u+\bb R_+v$ and let $\tau$ be the span of $u$ and $w:=v-u$. The inclusion $\sigma\subset\tau$ induces a non-injective map $\overline\sigma\to\overline\tau$, which sends all of $\infty v+\sigma$ to the point $\infty u+\infty w$. It follows that the fiber product (of monoids) $\overline\sigma\times_{\overline\tau}\overline\sigma$ does not lie in the essential image of \ref{eqn:compactification_of_cones_functor}.
\end{example}

Asymptotic cones have geometric significance: they describe the toric variety structure of closures of torus orbits. We recall this from the basic theory of toric varieties. 

\begin{proposition}\label{proposition:asymptotic_cones_give_the_TV_structure_of_orbits}
    Let $X$ be an affine toric variety over a field $k$, with cocharacter lattice $\Lambda$ and cone $\sigma\subset\Lambda_{\bb R}$. There is a natural bijection between torus orbits $Y^\circ \to X$ and faces $\kappa\subset\sigma$, such that the scheme-theoretic closure $Y \to X$ of $Y^\circ$ is an affine toric variety with cocharacter lattice $\Lambda/(\bb R\kappa\cap\Lambda)$ and cone $\sigma/(\bb R\kappa)=\infty_\kappa+\sigma$.
\end{proposition}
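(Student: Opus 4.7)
My plan is to prove the statement by following the classical orbit-cone correspondence for affine toric varieties, identifying the scheme-theoretic closure of each orbit as the spectrum of a ring over a face of the defining monoid, and then matching the cone that results with the asymptotic cone $\infty_\kappa+\sigma$ via \ref{proposition:extended_cones_are_unions_of_cones}.

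First, I would set notation: write $X=\Spec k[P]$ with $P=\sigma^\vee\cap M$ where $M=\Lambda^\vee$ is the character lattice. For each face $\kappa\subset\sigma$, I would check that the complement $P\setminus\kappa^\perp$ is a prime ideal of $P$. This is an immediate duality: if $p+q$ fails to be orthogonal to some $k\in\kappa$, then one of $\langle p,k\rangle,\langle q,k\rangle$ must already be positive. Applying \ref{lemma:quotient_of_pointed_monoid_by_prime_ideal_is_the_corresponding_face} then identifies the corresponding closed subscheme of $X$ with $\Spec k[P\cap\kappa^\perp]$, and singles out a distinguished $k$-point $x_\kappa$, namely the monoid map $P\to k$ sending $p$ to $1$ if $p\in\kappa^\perp$ and to $0$ otherwise.

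Next, I would identify the torus orbit $Y^\circ:=T\cdot x_\kappa$. Its stabilizer is the subtorus whose characters are trivial at $x_\kappa$, which are exactly the elements of $M\cap\kappa^\perp$. Since this subgroup is saturated in $M$ with dual lattice $\Lambda/(\Lambda\cap\bb R\kappa)$, the orbit $Y^\circ$ is a torus with this cocharacter lattice. The scheme-theoretic closure $Y$ of $Y^\circ$ in $X$ is then the integral closed subscheme $\Spec k[P\cap\kappa^\perp]$: the monoid $P\cap\kappa^\perp$ is an fs submonoid whose groupification is $M\cap\kappa^\perp$, so $Y$ is the affine toric variety for the torus $T_\kappa$ with character lattice $M\cap\kappa^\perp$.

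The remaining step is to identify the cone of $Y$. Under the projection $\pi\colon\Lambda_{\bb R}\to\Lambda_{\bb R}/\bb R\kappa$, the image $\pi(\sigma)=\sigma/(\bb R\kappa)$ is a strongly convex rational polyhedral cone whose dual inside $(M\cap\kappa^\perp)_{\bb R}$ is $\sigma^\vee\cap\kappa^\perp$, by linear duality applied to the short exact sequence $0\to\bb R\kappa\to\Lambda_{\bb R}\to\Lambda_{\bb R}/\bb R\kappa\to 0$. Hence the cone of $Y$ inside $(\Lambda/(\Lambda\cap\bb R\kappa))_{\bb R}$ is exactly $\sigma/(\bb R\kappa)$, and by \ref{proposition:extended_cones_are_unions_of_cones} this equals $\infty_\kappa+\sigma$. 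For the bijection, distinct faces produce distinct distinguished points hence distinct orbits, and by the classical orbit-cone correspondence every orbit contains a unique such distinguished point. The hard part will be only the bookkeeping of the duality between faces of $\sigma$ and faces of $\sigma^\vee$ inside the quotient lattice $\Lambda/(\Lambda\cap\bb R\kappa)$; once this is dispatched, the proposition is essentially a repackaging of the orbit-cone correspondence combined with the identification already supplied by the previous proposition.
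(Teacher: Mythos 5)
Your argument is correct, and it supplies the "elementary verification" that the paper's authors explicitly leave to the reader — the standard orbit--cone correspondence packaged to match the asymptotic-cone language of \ref{proposition:extended_cones_are_unions_of_cones}. The key steps all check out: $P\cap\kappa^\perp$ is a face of $P=\sigma^\vee\cap M$ (hence $P\setminus\kappa^\perp$ is prime) precisely because pairing with $\kappa\subset\sigma$ is nonnegative on $P$; the distinguished point $x_\kappa$ is a well-defined monoid map to $k$ because $P\setminus\kappa^\perp$ is an ideal; and the duality $(\pi(\sigma))^\vee=\sigma^\vee\cap\kappa^\perp$ inside $(M\cap\kappa^\perp)_\RR$ follows directly from the short exact sequence you cite. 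One small imprecision worth tightening: "the subtorus whose characters are trivial at $x_\kappa$" is slightly off as phrased — the stabilizer of $x_\kappa$ is the kernel of the surjection $T\to\Spec k[M\cap\kappa^\perp]$, i.e.\ the subtorus on which $\chi^m=1$ for all $m\in P\cap\kappa^\perp$, and one should note that $P\cap\kappa^\perp$ generates $M\cap\kappa^\perp$ as a group because $\sigma^\vee\cap\kappa^\perp$ is full-dimensional in $\kappa^\perp$. For surjectivity of the face--orbit map you invoke the classical correspondence rather than reproving it; that is reasonable given the authors do the same, but if you wanted a self-contained argument you would add that any $k$-point of $X$ lies in the open stratum of the unique smallest face containing its support, and translating by $T$ brings it to the distinguished point of that face.
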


\begin{proof}
The proof is an elementary verification which we leave to the reader. 

%
\end{proof}

The next lemma states that the decomposition \ref{eqn:decompose_basic_extended_cone_into_cones} is well-behaved with respect to morphisms of cones.

\begin{lemma}\label{lemma:stratification_by_asymptotic_cones_is_respected}
    Let $f \colon \sigma \to \tau$ be a morphism of cones, and let $\overline f$ be the induced morphism $\overline\sigma \to \overline\tau$. Then, the preimage of an asymptotic cone under $\overline f$ is a disjoint union of asymptotic cones.
\end{lemma}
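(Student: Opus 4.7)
The plan is to trace through the characterization of asymptotic cones in terms of where points take the value $\infty$, using the monoid descriptions. Writing $\sigma = \Hom(P,\bb R_{\geq 0})$ and $\tau = \Hom(Q,\bb R_{\geq 0})$ for fs monoids $P,Q$, the morphism $f\colon \sigma\to\tau$ is induced by a monoid map $\phi\colon Q\to P$ via precomposition, and the induced map $\overline f\colon\overline\sigma\to\overline\tau$ is given by the same formula $p\mapsto p\circ\phi$, now with values in $\mathbf R_{\geq 0}$. So the key object to follow is the subset $\phi^{-1}(A)\subset Q$ as $A$ varies over the subsets of $P$ on which a point of $\overline\sigma$ is finite.

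First I would unpack \ref{definition:origins_at_infinity} as follows: a point $p\in\overline\sigma$ lies in the asymptotic cone $\infty_\kappa+\sigma$ if and only if $p^{-1}(\{+\infty\})=P\setminus \kappa^\vee$, where $\kappa^\vee:=\{m\in P \mid m|_\kappa=0\}$ is the face of $P$ dual to $\kappa$. This is just the content of \ref{proposition:extended_cones_are_unions_of_cones} rewritten with the finite locus highlighted.

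Next I would carry out the direct computation that for any $q\in \infty_\kappa+\sigma$ and any $h\in Q$,
\[
(\overline f(q))(h) = q(\phi(h))
\]
is finite precisely when $\phi(h)\in\kappa^\vee$, i.e.\ precisely when $h\in\phi^{-1}(\kappa^\vee)$. Hence the infinity locus of $\overline f(q)$ in $Q$ is $Q\setminus\phi^{-1}(\kappa^\vee)$, and in particular depends only on $\kappa$ and not on the specific lift $q$.

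Finally I would invoke the standard fact that the preimage of a face of a monoid under a monoid homomorphism is again a face: since $\kappa^\vee$ is a face of $P$, the subset $\phi^{-1}(\kappa^\vee)$ is a face of $Q$, and by the bijection between faces of $Q$ and faces of $\tau$ it is of the form $\mu(\kappa)^\vee$ for a unique face $\mu(\kappa)\subset \tau$. Combined with the previous step this yields $\overline f(\infty_\kappa+\sigma)\subset \infty_{\mu(\kappa)}+\tau$, and then using the disjoint decomposition of \ref{proposition:extended_cones_are_unions_of_cones},
\[
\overline f^{-1}(\infty_\mu+\tau) \;=\; \bigsqcup_{\kappa\subset\sigma,\ \mu(\kappa)=\mu}(\infty_\kappa+\sigma),
\]
which is a disjoint union of asymptotic cones of $\overline\sigma$. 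The only delicate point is bookkeeping the duality between faces of $\sigma$ and faces of $P$ (and similarly for $\tau$ and $Q$) so that ``preimage of a face is a face'' on the monoid side translates correctly into the claim on the cone side; once that correspondence is pinned down, the rest is essentially formal.
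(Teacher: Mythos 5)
Your proof is correct and is essentially the paper's argument, recast on the dual monoid side: the paper tracks the images of the origin points $\infty_\kappa$ directly and notes that $\overline f(\infty_\kappa)=\infty_{\kappa'}$ where $\kappa'$ is the smallest face of $\tau$ containing $f(\kappa)$, whereas you track the infinity locus $p^{-1}(\{+\infty\})$ and identify $\phi^{-1}(\kappa^\vee)$ as the dual face. Both reduce to showing that the image of an asymptotic cone lands inside a single asymptotic cone and then invoking the disjoint decomposition of \ref{proposition:extended_cones_are_unions_of_cones}.
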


\begin{proof}
    By \ref{definition:origins_at_infinity}, we see that for any face $\kappa\subset\sigma$, $\overline f(\infty_\kappa)$ is $\infty_{\kappa'}$ where $\kappa'$ is the smallest face of $\tau$ containing $f(\kappa)$. Thus, the image of any asymptotic cone is contained in an asymptotic cone, and the lemma follows.
\end{proof}

We may now get to the point: scheme-theoretic fiber products of toric varieties are governed by the combinatorics of their fans \emph{enriched to include the asymptotic cones}:

\begin{theorem}\label{theorem:faces_of_fiberprod_of_TVs_are_TVs_attached_to_asymptotic_cones}
    Let $k$ be a field and let $(f_i \colon X_i \to B)_{1\leq i\leq n}$ be morphisms of affine toric varieties over $k$. Let $F:=X_1\times_B \cdots\times_B X_n$ be the fiber product in the category of schemes, and $Y^\circ \to F$ be a torus orbit with normalized closure $Y \to F$. Then $Y^\circ$ is a fiber product of orbits $Y_i^\circ \to X_i$ over an orbit $Y^\circ_B \to B$, and the cone of the affine toric variety $Y$ is the fiber product of the asymptotic cones of the $Y_i$ over that of $Y_B$.
\end{theorem}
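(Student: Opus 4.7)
The plan is to reduce the statement to Proposition~\ref{proposition:toric_fiberprod_is_fsification_of_sch_fiberprod} applied to the restriction of the $f_i$ to appropriate orbit closures, and then to read off the cone of the resulting toric fiber product from Proposition~\ref{proposition:asymptotic_cones_give_the_TV_structure_of_orbits}. Write $X_i=\Spec k[P_i]$ with cone $\sigma_i$ and dense torus $T_i$, and similarly for $B$; let $T_F:=T_1\times_{T_B}\cdots\times_{T_B}T_n$ be the fiber product diagonalizable group scheme, acting naturally on $F$ so that a ``torus orbit'' of $F$ is a $T_F$-orbit.

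For part (1), I would use that each projection $F\to X_i$ (respectively $F\to B$) is equivariant with respect to the group homomorphism $T_F\to T_i$ (respectively $T_F\to T_B$), so the image of the $T_F$-orbit $Y^\circ$ is contained in a single orbit $Y_i^\circ$ (respectively $Y_B^\circ$); let $\kappa_i\subset\sigma_i$ and $\kappa_B\subset\sigma_B$ be the corresponding faces via Proposition~\ref{proposition:asymptotic_cones_give_the_TV_structure_of_orbits}. This produces a $T_F$-equivariant morphism
\[
Y^\circ\to Y_1^\circ\times_{Y_B^\circ}\cdots\times_{Y_B^\circ}Y_n^\circ.
\]
The target is itself a homogeneous space for $T_F$, because each $Y_i^\circ$ is a homogeneous space for $T_i$ and the fiber product of homogeneous spaces is homogeneous under the fiber product of the acting groups. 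A nonempty $T_F$-orbit contained in a homogeneous $T_F$-space is the whole space, so the displayed map is an isomorphism.

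For part (2), Proposition~\ref{proposition:asymptotic_cones_give_the_TV_structure_of_orbits} identifies the scheme-theoretic closures $Y_i$ and $Y_B$ as affine toric varieties whose cones are $\sigma_i/\bb R\kappa_i$ and $\sigma_B/\bb R\kappa_B$, i.e.\ the asymptotic cones $\infty_{\kappa_i}+\sigma_i$ and $\infty_{\kappa_B}+\sigma_B$ of $\overline{\sigma}_i$ and $\overline{\sigma}_B$. The $f_i$ restrict to torus-equivariant morphisms $Y_i\to Y_B$. Set $G:=Y_1\times_{Y_B}\cdots\times_{Y_B}Y_n$, a closed subscheme of $F$ in which $Y^\circ$ is, by part (1), exactly the fiber product of the dense tori. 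The normalized closure of $Y^\circ$ in $F$ therefore agrees with its normalized closure in $G$, which by Proposition~\ref{proposition:toric_fiberprod_is_fsification_of_sch_fiberprod} (extended from two to $n$ factors by iteration) is the toric fiber product of the $Y_i$ over $Y_B$. The cone of this toric fiber product is the fiber product of the cones of the factors, by the standard equivalence between fans and toric varieties, and substituting the asymptotic-cone descriptions yields the claimed formula.

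The main obstacle is the homogeneity claim in part (1): because $T_F$ can be disconnected, one cannot appeal directly to transitivity of a connected torus. Instead, I would argue via stabilizers, writing each $Y_i^\circ\cong T_i/H_i$ and $Y_B^\circ\cong T_B/H_B$ with the $H_i\to H_B$ compatible, and verifying that the natural map from $T_F/(H_1\times_{H_B}\cdots\times_{H_B}H_n)$ onto the fiber product of the orbits is an isomorphism of $T_F$-spaces. Example~\ref{example:injections_of_cones_are_not_injections_of_extended_cones} is a useful sanity check that analogous fiber-product constructions at the level of compactified cones can behave pathologically, so this step deserves care.
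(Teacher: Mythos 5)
The overall shape of your argument matches the paper's: invoke Proposition~\ref{proposition:asymptotic_cones_give_the_TV_structure_of_orbits} to identify the closures $Y_i$, $Y_B$ with the asymptotic cones, then apply Proposition~\ref{proposition:toric_fiberprod_is_fsification_of_sch_fiberprod} to the restricted morphisms $Y_i\to Y_B$. Your part~(2) is correct, and the iteration of \ref{proposition:toric_fiberprod_is_fsification_of_sch_fiberprod} from two to $n$ factors is unproblematic (one can argue directly via the universal property of normalized closures of $T_1\times_{T_B}\cdots\times_{T_B}T_n$).

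Your part~(1), however, contains a genuine gap. The homogeneity claim --- that $T_F:=T_1\times_{T_B}\cdots\times_{T_B}T_n$ acts transitively on $Y_1^\circ\times_{Y_B^\circ}\cdots\times_{Y_B^\circ}Y_n^\circ$ --- is false, and the difficulty is not merely that $T_F$ may be disconnected. Take $X_1=X_2=B=\bb A^2$ with $X_1\to B$ given by $(a,b)\mapsto(ab,a^2b)$ and $X_2\to B$ given by $(c,d)\mapsto(cd,cd^2)$. Then $T_F=\{((s,t),(t,s))\}\cong\bb G_m^2$. The orbits $Y_1^\circ=\{(a,0):a\neq0\}$, $Y_2^\circ=\{(0,d):d\neq0\}$, $Y_B^\circ=\{0\}$ have fiber product $\bb G_m^2$, but $T_F$ acts on it through the single coordinate $s$, scaling both $a$ and $d$. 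So $\bb G_m^2$ is a one-parameter family of one-dimensional $T_F$-orbits. Tracing through your stabilizer reformulation, the point is that $T_F/(H_1\times_{H_B}\cdots\times_{H_B}H_n)\to Y_1^\circ\times_{Y_B^\circ}\cdots\times_{Y_B^\circ}Y_n^\circ$ is always a monomorphism of group schemes but need not be surjective; concretely, at the level of rational cocharacter lattices, the inclusion $\bb Q\kappa_B\cap\bigl(\sum_i f_{i*}N_{i,\bb Q}\bigr)\supseteq\sum_i f_{i*}(\bb Q\kappa_i)$ can be strict.

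This also shows that the conclusion of the theorem would actually be \emph{false} if ``torus orbit of $F$'' were read as ``$T_F$-orbit'': in the example, the $T_F$-orbit of $(1,0,0,1)$ has normalized closure $\bb A^1$ with cone $\bb R_{\geq0}$, while the fiber product of asymptotic cones is $\bb R_{\geq0}\times_{\{0\}}\bb R_{\geq0}=\bb R_{\geq0}^2$. The intended meaning, consistent with the surrounding discussion of fine faces and with the corollary that follows, is that a ``torus orbit of $F$'' is the intersection of $F$ with an orbit of the big torus $T_1\times\cdots\times T_n$ acting on $X_1\times\cdots\times X_n$ --- equivalently, a locally closed stratum of the log scheme $F$, which is by construction exactly a fiber product $Y_1^\circ\times_{Y_B^\circ}\cdots\times_{Y_B^\circ}Y_n^\circ$ of orbits. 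Under that reading, part~(1) is essentially definitional, which is why the paper simply ``writes'' $Y^\circ$ in that form and moves on. Your attempt to prove the false $T_F$-transitivity statement is therefore misdirected; once you replace it with this reading, the rest of your argument goes through unchanged.
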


\begin{proof}
    Asymptotic cones map to asymptotic cones by \ref{lemma:stratification_by_asymptotic_cones_is_respected}. Let $Y^\circ \to F$ be a torus orbit, $\overline Y \to F$ be its closure and $Y \to F$ be its normalized closure. Write $Y^\circ = Y_1^\circ \times_{Y_B^\circ}...\times_{Y_B^\circ}Y_n^\circ$ for some torus orbits $Y_B^\circ$ of $B$ and $Y_i^\circ$ of $X_i$, with closures $Y_B \to B$ and $Y_i \to X_i$. By \ref{proposition:asymptotic_cones_give_the_TV_structure_of_orbits}, the toric variety structures $Y_B,Y_i$ are given by asymptotic cones $\sigma_B,\sigma_i$ of the cones of $B,X_i$, such that $\sigma_i$ maps to $\sigma_B$. Thus, by \ref{proposition:toric_fiberprod_is_fsification_of_sch_fiberprod}, $Y$ is the affine toric variety with cone $\sigma_1\times_{\sigma_B}...\times_{\sigma_B}\sigma_n$.
\end{proof}

\begin{corollary}
    Let $k$ be a field and let $(f_i \colon X_i \to B)_{1\leq i\leq n}$ be morphisms of toric varieties over $k$, with scheme-theoretic fiber product $f \colon F \to B$. Let $Y \to F$ be the normalization of an irreducible component, and $Y_B \to B$ be the smallest closure of torus orbits containing $f(Y)$. Let $\sigma_B$ be the asymptotic cone of $B$ attached to $Y_B$. The asymptotic cones of $X_i$ mapping to $\sigma_B$ form a fan $\Sigma_i$, and $Y$ is a toric variety with fan $\Sigma_1\times_{\sigma_B}\cdots\times_{\sigma_B}\Sigma_n$.
\end{corollary}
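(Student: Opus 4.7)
The plan is to reduce to the affine case, Theorem \ref{theorem:faces_of_fiberprod_of_TVs_are_TVs_attached_to_asymptotic_cones}, by covering $B$ with torus-invariant affine toric opens, applying that theorem chart-by-chart, and gluing the resulting local toric descriptions into a global fan on $Y$.

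Let $\kappa_B\in\Sigma_B$ be the cone corresponding to the dense orbit $Y_B^\circ$ of $Y_B$. First I would choose affine charts $U_B=\Spec k[P_B]\subset B$ with cone $\tau_B\supset\kappa_B$, so that $\sigma_B$ is realized as the quotient $\tau_B/\bb R\kappa_B$. The preimages $f_i^{-1}(U_B)\subset X_i$ admit affine toric covers $V_{i,\beta}=\Spec k[P_{i,\beta}]$ with cones $\sigma_{i,\beta}$ mapping into $\tau_B$, and the scheme-theoretic fibre product above $U_B$ is covered by affine products $V_{1,\beta_1}\times_{U_B}\cdots\times_{U_B}V_{n,\beta_n}$. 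Applying Theorem \ref{theorem:faces_of_fiberprod_of_TVs_are_TVs_attached_to_asymptotic_cones} on each such piece identifies the closure of the generic orbit $Y^\circ$ as an affine toric variety whose cone is the fibre product, over $\sigma_B$, of the asymptotic cones $\sigma_{i,\beta_i}/\bb R\kappa_i$, where $\kappa_i\in\Sigma_{X_i}$ is the cone of the orbit $Y_i^\circ$ in the intrinsic decomposition $Y^\circ=Y_1^\circ\times_{Y_B^\circ}\cdots\times_{Y_B^\circ}Y_n^\circ$ supplied by the affine theorem. Minimality of $Y_B$ ensures that $\kappa_B$ is the smallest face of $\tau_B$ containing $f_i(\kappa_i)$ for each $i$.

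By Lemma \ref{lemma:stratification_by_asymptotic_cones_is_respected}, the asymptotic cones of $X_i$ mapping to $\sigma_B$ assemble into a fan $\Sigma_i$, and the affine pieces above describe cones of the fibre-product fan $\Sigma_1\times_{\sigma_B}\cdots\times_{\sigma_B}\Sigma_n$. The remaining step is to glue these toric descriptions across overlaps: whenever $\tau_B'\subset\tau_B$ is a face inclusion of cones containing $\kappa_B$, quotienting by $\bb R\kappa_B$ yields a face inclusion of the associated asymptotic cones, and hence of their fibre products. The main obstacle lies in this gluing verification --- confirming that the local fibre-product fans assemble independently of the chosen charts into a single normal toric variety realizing $Y$ --- which reduces to the functoriality encoded in Lemma \ref{lemma:stratification_by_asymptotic_cones_is_respected} combined with the universal property of the scheme-theoretic fibre product and the fact, already used in the affine case, that normal irreducible binomial subschemes over a field are toric.
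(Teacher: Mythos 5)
Your proposal is correct and takes exactly the route the paper intends: the paper states this as an immediate corollary of Theorem \ref{theorem:faces_of_fiberprod_of_TVs_are_TVs_attached_to_asymptotic_cones} and supplies no further argument, and your proof is precisely the standard globalization of that affine statement by covering $B$ and the $X_i$ with torus-invariant affine charts, applying the theorem chart-by-chart, and gluing the local cones into a fan via \ref{lemma:stratification_by_asymptotic_cones_is_respected}. One small point worth stating explicitly: the phrase ``asymptotic cones of $X_i$ mapping to $\sigma_B$'' should be read as those attached to the single cone $\kappa_i$ determined by the orbit $Y_i^\circ$ in the decomposition $Y^\circ = Y_1^\circ\times_{Y_B^\circ}\cdots\times_{Y_B^\circ}Y_n^\circ$ (so that they live in a common quotient lattice and genuinely form a fan), and your argument correctly resolves this by fixing $\kappa_i$ from the chosen component $Y$ at the outset.
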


The different (compactified) fans occurring in the corollary above are the (compactified) fans of the toric varieties occurring as the faces of the fiber product $X_1\times_B\cdots\times_B X_n$, where the latter is given its logarithmic structure as a fiber product in the category of coherent logarithmic schemes. 

\subsubsection{Chow quotients of toric varieties}\label{sec: Chow-quotients} Another instance of fine and saturated limits occurs in the theory of Chow quotients. In~\cite{KSZ91} the authors consider the problem of forming the quotient of a projective $T$-toric variety $X$ by a subtorus $H\subset T$ of the dense torus. 

The Chow quotient $X/\!/H$ is constructed as the normalized closure of the natural map $T/H\hookrightarrow \mathsf{Chow}(X)$ into the Chow variety of cycles in $X$, where the map is obtained by considering $H$-orbit closures.

After choosing linearization data $\theta$, geometric invariant theory gives rise to a quotient $X/\!/_\theta H$. Kapranov--Sturmfels--Zelevinsky show that for each $\theta$, there is an equivariant map
\[
X/\!/H\to X/\!/_\theta H.
\]
The system of GIT quotients forms an inverse system with equivariant maps. While this produces a map
\begin{equation}\label{eqn:chowgit}
    X/\!/H\to \varprojlim_\theta X/\!/_\theta H,
\end{equation}
the Chow quotient is {\it not} typically the inverse limit itself. 

\begin{proposition}
    The Chow quotient $X/\!/H$ is isomorphic to the fine and saturated inverse limit
    \[
    \varprojlim_\theta X/\!/_\theta H.
    \]
\end{proposition}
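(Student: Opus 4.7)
The plan is to identify both sides as normal toric compactifications of the torus $T/H$, and to show that their fans coincide.

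First, I would reduce to a finite inverse system. The GKZ chamber decomposition of the space of linearizations is finite, so up to canonical isomorphism only finitely many distinct $X/\!/_\theta H$ appear, and the fs inverse limit reduces to an iterated fs fiber product of toric varieties over further toric varieties. By iterated application of \ref{proposition:toric_fiberprod_is_fsification_of_sch_fiberprod}, this fs limit is the normalization of the closure of $T/H$ inside the scheme-theoretic inverse limit; in particular it is an irreducible normal toric variety with dense torus $T/H$. By construction the Chow quotient $X/\!/H$ also contains $T/H$ as a dense open subvariety, and the morphisms of \ref{eqn:chowgit} are $T/H$-equivariant.

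Next, I would use the universal property of the fs inverse limit together with \ref{eqn:chowgit} to produce a canonical equivariant morphism
\[
\phi \colon X/\!/H \longrightarrow \varprojlim_\theta X/\!/_\theta H
\]
which restricts to the identity on the common dense torus $T/H$. To show $\phi$ is an isomorphism, I would compare fans. The theorem of Kapranov--Sturmfels--Zelevinsky identifies the fan of $X/\!/H$ with the common refinement of the fans of the $X/\!/_\theta H$. On the other side, since $T/H$ meets the dense torus of each $X/\!/_\theta H$, the irreducible component of the scheme-theoretic inverse limit containing $T/H$ corresponds to the strata indexed by the apex cone of each factor -- no origin-at-infinity intervenes. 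Hence the corollary to \ref{theorem:faces_of_fiberprod_of_TVs_are_TVs_attached_to_asymptotic_cones}, applied to this component, describes its fan as the iterated fiber product of the fans of the $X/\!/_\theta H$ over the trivial fan, which is precisely their common refinement.

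Having matched the two fans, $\phi$ becomes an equivariant morphism of normal toric varieties inducing the identity on cocharacter lattices and on fans, so it is an isomorphism. The main obstacle I anticipate is the combinatorial identification: one must verify carefully that the universal refinement map arising from the equivariant morphisms $X/\!/H \to X/\!/_\theta H$ coincides, cone by cone, with the KSZ common refinement. A secondary subtlety is to ensure that the extended-cone framework of \ref{section:fans_and_compactifications} does not single out a ``larger'' irreducible component of the scheme-theoretic limit in place of the one containing $T/H$; this is handled by the observation that the image of $T/H$ avoids every origin-at-infinity in every $X/\!/_\theta H$, so the only asymptotic cones that contribute to the relevant face are the original fans themselves.
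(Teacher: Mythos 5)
Your proof is correct, but it takes a genuinely different route from the paper's. The paper's proof is short and citation-driven: it quotes \cite[Corollary~4.3]{KSZ91} for the existence of a \emph{finite} map $X/\!/H \to \varprojlim_\theta X/\!/_\theta H$, observes that the fs inverse limit is the normalized closure of $T/H$ inside the scheme-theoretic inverse limit, and then concludes that this normalized closure must equal $X/\!/H$ by uniqueness of normalization (the finite map restricts to an isomorphism on $T/H$, so both sides are the normalization of the closure of $T/H$). You instead compare fans: you identify both sides as normal $T/H$-toric varieties and show that both fans are the common refinement of the fans of the $X/\!/_\theta H$, quoting a different KSZ theorem (the fan description of the Chow quotient) and the paper's own \ref{proposition:toric_fiberprod_is_fsification_of_sch_fiberprod} and \ref{theorem:faces_of_fiberprod_of_TVs_are_TVs_attached_to_asymptotic_cones}. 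The fan comparison makes the combinatorial content more visible and is more self-contained in terms of the machinery built in this paper, at the cost of two steps you leave informal: the rewriting of the finite inverse limit as an iterated fiber product, and the interchange of ``normalized closure of $T/H$'' with that iteration (normalizing step by step versus all at once). Neither is a genuine gap --- both are standard --- but the paper's appeal to the finiteness of the KSZ map together with uniqueness of normalization sidesteps them entirely.
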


\begin{proof}
    The Chow quotient is a normal $T/H$-toric variety. Kapranov--Sturmfels--Zelevinsky have already shown that it admits a finite map to the inverse limit of GIT quotients~\cite[Corollary~4.3]{KSZ91}. The fine and saturated inverse limit is also the normalized closure of $T/H$ in the scheme theoretic inverse limit. This normalized closure is $X/\!/H$ since \ref{eqn:chowgit} identifies the copies of $T/H$ on both sides.
\end{proof}

\subsection{Stable maps} Our motivation to pursue a description of these ``other components'' comes from logarithmic Gromov--Witten theory. We now explain these examples. We work over $\CC$ for this discussion.

\subsubsection{The setup and context} Let $X$ be a smooth projective variety over the complex numbers and $D$ a simple normal crossings divisor on $X$ with components $D_1,...,D_r$.\footnote{The smoothness and normal crossings hypotheses are here for simplicity of exposition. The story is essentially unchanged for logarithmically smooth $X$. In fact, one can even drop this assumption, but the virtual class, which is a key part of the story, is no longer well-defined} We are interested in maps of pairs from pointed nodal curves:
\[
f\colon (C,p_1,\ldots,p_n)\to (X,D).
\]
Maps of pairs have the property that $f^{-1}(D)$ is a subset of $\{p_i\}$, and we additionally fix the contact orders $c_{ij}$ of $f$ at the point $p_i$ with the divisor $D_j$. The moduli space is not compact, and a compactification is provided by examining families of {\it logarithmic maps} over fine and saturated logarithmic schemes $S$. When $S$ has trivial logarithmic structure we recover families of maps from pairs. 

The moduli problem of logarithmic maps from logarithmic curves to a logarithmic target is a stack over the category of fine and saturated logarithmic schemes. It is represented by an algebraic stack with logarithmic structure by the main results of~\cite{AC11,Che10,GS13}. 

We impose a {\it stability} condition, that the underlying map has finite $X$-automorphism group, and the outcome is a Deligne--Mumford stack with two key properties:
\begin{itemize}
\item The components of the moduli space parameterizing maps with fixed discrete data, recorded formally below, are proper. 
\item The connected components of the moduli space admit virtual fundamental classes. 
\end{itemize}

The special case when $D$ is a single smooth divisor ($r=1$) is simpler and much better understood, and in fact essentially completely understood in principle~\cite{MP06}. The simple normal crossings case is significantly more complicated, and this can be seen for instance in the relative complexities of the degeneration formulas in these settings~\cite{AbramovichChenGrossSiebert,Li01,MR23}. It suggests a basic question: {\it can the calculations in the smooth pair case be bootstrapped to produce calculations in the general case?} This general strategy has been the topic of several papers~\cite{BattistellaNabijouRanganathan,NabijouRanganathan}. 

From our perspective, relating the general case to the case $r=1$ involves performing a scheme theoretic fiber product of the smooth pair spaces, and then applying the fine saturation functor (\ref{proposition:stable_maps_to_XD_are_fiber_prods_of_maps_to_XDi}). The main complexity in controlling the full logarithmic theory is the latter operation. Our construction proposes, for each snc pair $(X,D)$ several ``spurious components'' of non-logarithmic maps that capture the relevant complexity, and explain why the naive generalizations of statements like the degeneration formulas fail.

\subsubsection{The details} We denote by $M_i$ the log structure on $X$ induced by $D_i$, and by $M$ the one induced by $D$. We denote the resulting log scheme by $(X,D)$ or $(X,M)$ interchangeably. 

We fix a {\it numerical data} $\Lambda$ of logarithmic stable maps to $X$, i.e. the data of (i) the domain genus $g$, (ii) the number $n$ of marked points, (iii) the class $\beta\in H_2(X;\ZZ)$ of $f_\star[C]$, and (iv) the tangency order $c_{ij}$ for $p_i$ along $D_j$. 


\begin{definition}\label{definition:log_stable_maps}
    The \emph{moduli space $\ca M_\Lambda(X,D)$ of stable maps with numerical data $\Lambda$} (we will often omit $\Lambda$ from the notation) is the \'etale DM stack over $\LSch$ whose fiber over $T$ is the groupoid of stable maps
    \begin{equation}\label{eqn:stable_maps}
        \begin{tikzcd}
            C \arrow[d] \arrow[r] & (X,M) \\
            T &
        \end{tikzcd}
    \end{equation}
    with numerical data $\Lambda$. The morphisms are commutative diagrams which are cartesian with respect to the top arrow of \ref{eqn:stable_maps} and the identity on $(X,M)$.
\end{definition}

The fact that this is representable by a Deligne--Mumford stack over ordinary schemes is the main result of~\cite{AC11,GS13}. 

The geometry of $\ca M(X,D)$ relates to that of the $\ca M(X,D_i)$ via the following proposition.

\begin{proposition}\label{proposition:stable_maps_to_XD_are_fiber_prods_of_maps_to_XDi}
    The natural map
    \[
    \ca M(X,D) \to \ca M(X,D_1) \times_{\ca M(X)} \ca M(X,D_2) \dots \times_{\ca M(X)} \ca M(X,D_n)
    \]
    is an isomorphism, where the fiber product is taken in $\cat{LSch}^{\sf fs}$.
\end{proposition}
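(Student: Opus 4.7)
The plan is to exhibit both sides as functors on $\cat{LSch}^{\sf fs}$ and prove they are naturally isomorphic. The structural input driving everything is that for an snc divisor $D=\sum D_i$, the log structure $M$ on $X$ is the coproduct of the $M_i$ in (fs) log structures over $X$, i.e.\ $M = M_1\oplus^{\log}_{\ca O_X^\times}\cdots\oplus^{\log}_{\ca O_X^\times}M_r$. This is \'etale local on $X$ and reduces to the statement that $\bb N^r$ is the coproduct of $r$ copies of $\bb N$ in the category of fs monoids, which is immediate from the snc assumption and the local description of $M$ at a point of $D$.

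The universal map in the statement is induced by the morphisms of log structures $M_i \hookrightarrow M$, which yield morphisms of log schemes $(X,M)\to (X,M_i)$ and hence functorial maps $\ca M(X,D)\to \ca M(X,D_i)$ compatible with $\ca M(X)$. To check the map is an isomorphism, I will compare $T$-points for a test fs log scheme $T$. An $T$-valued point of the LHS is a family $(\pi\colon C\to T,\, f\colon C\to X,\, f^*M\to M_C)$ with the required numerical data. An $T$-valued point of the RHS, unwound via the universal property of the fs fiber product, consists of a shared underlying stable map $\underline f\colon \underline C\to \underline X$ over $\underline T$ together with, for each $i$, a morphism $\underline f^*M_i\to (M_C)_i$ of log structures and an fs log curve structure $((M_T)_i,(M_C)_i)$, with the log structures on $T$ (and on $C$) glued via the fs-saturated pushout.

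The equivalence now follows from two applications of the coproduct description. First, on $X$: since $\underline f^*M = \bigoplus^{\log}_{\ca O_C^\times} \underline f^*M_i$, a morphism $\underline f^*M\to M_C$ is the same as a compatible family $\underline f^*M_i\to M_C$. Second, on $T$ and $C$: the minimal fs log structure on $T$ making $(C,M_C)\to (T,M_T)$ into a stable log map to $(X,M)$ is the fs-saturated pushout of the minimal log structures for each $(X,M_i)$, by the functoriality of minimality (in the sense of Gross–Siebert) under log structure coproducts. These two facts together produce an inverse to the natural map, both at the level of objects and morphisms of the groupoid.

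The main obstacle is the final compatibility in the previous paragraph: one must verify that the fs coproduct of log curve structures $(M_T)_i$ really does produce a log curve structure $M_T$, and that the combined map $(C,M_C)\to (X,M)$ is stable with the prescribed numerical data $\Lambda$ (in particular the contact orders $c_{ij}$). This requires knowing that log smoothness of curves is preserved under fs pushouts of their base log structures in this setting, together with the fact that, locally at a marking or a node, the tangency data for $M$ is exactly the data for the individual $M_i$ assembled via the coproduct decomposition. The snc hypothesis on $D$ is what guarantees this local compatibility; in particular, the contribution of each $M_i$ to $M_C$ can be read off independently near each geometric point of $C$, so the fs-saturated coproduct construction recovers the stability and minimality data on the nose.
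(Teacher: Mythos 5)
Your central idea is correct: for snc $D = \sum D_i$, the fs log structure $M$ on $X$ is the pushout of the $M_i$ over $\ca O_X^\times$, and comparing $T$-points of the two sides over fs $T$ via this decomposition proves the statement. This is essentially what~\cite{AC11} does, which is where the paper delegates the proof; your reduction of the coproduct claim to the fact that $\bb N^k$ is the coproduct of $k$ copies of $\bb N$ among fs monoids is also correct.

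However, your unwinding of a $T$-point of the right-hand side is off, and that confusion drives the unnecessary second half of your argument. The paper defines $\ca M(X,D)$ as a fibered category over $\LSch$ (\ref{definition:log_stable_maps}), and the fs fiber product is the $2$-fiber product of the associated fibered categories over $\LSchfs$. For a fixed fs $T$, a $T$-point of the right side is therefore just a tuple of $T$-points of the $\ca M(X,D_i)$ with compatible images in $\ca M(X)$: there is a single fs log structure $M_T$ on $T$ (the given one), and the log curve $C/T$ is already pinned down by the shared $\ca M(X)$-datum. No family $(M_T)_i, (M_C)_i$ arises, nothing on $T$ or $C$ is glued by fs pushout, and the whole discussion of functoriality of minimality is not needed -- which is fortunate, since the step you flag as the ``main obstacle'' (that minimality in the sense of Gross--Siebert is functorial under log-structure coproducts) is genuinely nontrivial and would require its own argument. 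What does remain, once $C/T$ and $\underline f$ are fixed, is to identify a compatible family of morphisms of log structures $\underline f^*M_i \to M_C$ over $\ca O_C$ with a single morphism $\underline f^*M \to M_C$; that is your coproduct step, correctly applied. You should then say a word about why the numerical data $\Lambda$ and the stability condition transfer: the contact order of a marking along $D_j$ is read off from the $j$-th factor of the characteristic monoid and hence respects the decomposition, while stability is a condition on $\underline f$ alone.
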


\begin{proof}
See~\cite{AC11}. 
\end{proof}

In view of \ref{proposition:stable_maps_to_XD_are_fiber_prods_of_maps_to_XDi}, once we understand the $\ca M(X,D_i)$ there is still one ingredient missing to understand $\ca M(X,D)$: we must be able to compute fiber products in $\cat{LSch}^{\sf fs}$, i.e. to compute the integral saturation of the scheme-theoretic fiber product (see \ref{remark:fiber_prod_vs_fs_fiber_prod}). From now on, we denote by $\ca M^{\sf nfs}(X,D)$ the fiber product of the $\ca M(X,D_i)$ over $\ca M(X)$ in $\cat{LSch}$, so $$\ca M(X,D)=(\ca M^{\sf nfs}(X,D))^{\sf fs}.$$ 

In order to effectivize this, we need to chart the morphism
\[
\mathcal M(X,D_i)\to\mathcal M(X). 
\]
Let $p$ be a point of the domain and let $q$ be its image. Let $P$ be the stalk of the characteristic sheaf at $p$ and let $Q$ be the stalk at $Q$. These have explicit descriptions: the dual cone $Q^\vee$ is a moduli space of tropical curves whose numerical data is that of the domain curve of the map at $q$. Similarly, $P^\vee$ is a moduli space of tropical maps. See~\cite[Section~2]{GS13} for further discussion. 

By our discussion on charts for morphisms, the induced map on dual monoids is locally given by
\[
P^\vee\times\ZZ^m\to Q^\vee,
\]
where $\mathbb Z^m$ is the cocharacter lattice of the torus factor needed to produce the local chart. 

We can therefore describe the cone complexes of the fine faces by passing to extended tropicalizations, and then taking fiber products. These are in turn described by tropical maps, and this immensely aids the analysis. 

We single out a key case in this strategy: the double ramification cycle. Let $A$ be an $n\times r$ integer matrix, and fix an integer $g$. The locus $\mathsf{DR}_g(A)$ in $\mathcal M_{g,n}$ is the locus of curves $[C,p_1,\ldots,p_n]$ such that the for all $j$ the divisor $\sum_i a_{ij}p_i$ is principal. It can be compactified by studying logarithmic line bundles~\cite{Molcho_Wise_2022}, or more explicitly via a resolution of the Abel--Jacobi map~\cite{Hol21,MW20}. 

One can study the fine saturation map:
\[
\mathsf{DR}_g(A)\to \mathsf{DR}_g(A_1)\times_{\overline{\mathcal{M}}_{g,n}}\cdots\times_{\overline{\mathcal{M}}_{g,n}} \mathsf{DR}_g(A_r).
\]
This fiber product relates the {\it higher} double ramification cycle to {\it products} of several double ramification cycles, and aspects of this geometry are analyzed in~\cite{MR21}, however a complete understanding of the right hand side is not required. It would be interesting to analyze this more completely. We state the problem formally:\medskip

\noindent
{\bf Problem.} {\it Describe the fine faces of the logarithmic scheme $\mathsf{DR}_g(A_1)\times_{\overline{\mathcal{M}}_{g,n}}\cdots\times_{\overline{\mathcal{M}}_{g,n}} \mathsf{DR}_g(A_r)$.}\medskip

A key part should be to understand the corresponding fiber product of extended tropicalizations. The log structure can be straightforwardly described from the corresponding tropical moduli problem: the tropical double ramification cycle $\mathsf{DR}^{\sf trop}_g(A_i)$ exhibits an open in a logarithmic blowup:
\[
\overline{\mathcal{M}}_{g,n}(A_i)\to\overline{\mathcal{M}}_{g,n}.
\]
The map $\mathsf{DR}_g(A_i) \to \overline{\mathcal{M}}_{g,n}$ factors through a \emph{strict} map $\mathsf{DR}_g(A_i)\to \overline{\mathcal{M}}_{g,n}(A_i)$, so that the log structure of $\mathsf{DR}_g(A_i)$ can be understood via the tropical problem $\mathsf{DR}^{\sf trop}_g(A_i)$.

\subsubsection{Sources of non-fine morphisms} We conclude the section by describing the two ways in which a point of $\ca M^{\sf nfs}(X,D)$ can {\it fail} to lie in $\ca M(X,D)$. We assume familiarity with the basics of tropical maps, see~\cite{GS13} or~\cite{RanganathanExpansions} for relevant background. 

\begin{remark}\label{remark:tropical_obstruction}
We see a potential \emph{tropical obstruction} for a point $T \to \ca M^{\sf nfs}(X,D)$ to factor through $\ca M(X,D)$. Indeed, $T$ corresponds to a stable map $\ul\pi$ from a nodal curve $\ul C/\ul T$ to $\ul X$, together with a $n$-uple of log structures $M_C^{(1)},\dots,M_C^{(n)}$ and log stable maps $\pi_i \colon (\ul C,M_C^{(i)}) \to (X,D_i)$ with underlying scheme map $\ul \pi$. A factorization of $T \to \ca M^{\sf nfs}(X,D)$ through $\ca M(X,D)$ would be a log structure $M_C$ on $C$ and a stable map $\pi \colon (\ul C,M_C) \to (X,D)$ inducing all the $\pi_i$. The maps $\pi$, resp. $\pi_i$ would tropicalize to \emph{piecewise-linear functions} $\pi^{\sf trop}$, resp. $\pi_i^{\sf trop}$ on the dual graph $\Gamma_{C/T}$ of $C/T$, with values in $\bb Z^r$, resp. $\bb Z$.\footnote{This means functions on the metric graph $\Gamma_{C/T}$ which are linear with integer slope along every edge. For a detailed description of this tropicalization procedure, see e.g. \cite{Molcho_Wise_2022}.} The piecewise-linear maps $\pi^{\sf trop}$ and $\pi_i^{\sf trop}$ would have the same slopes along every edge since the latter would be the projection of the former onto the $i$-th coordinate of $\bb Z^n$. This gives a combinatorial obstruction for the existence of $\pi$: no suitable $\pi^{\sf trop}$ can exist unless all the $\pi_i^{\sf trop}$ have the same slopes along every edge of $\Gamma_{C/T}$.
\end{remark}

The tropical obstruction of \ref{remark:tropical_obstruction} is not the only obstacle to the surjectivity of the map $\ca M^{\sf nfs}(X,D) \to \ca M(X,D)$.

\begin{example}
     Let $X=\bb P^2$ with affine coordinates $x,y$ and let $D=D_x+D_y$ be the divisor of the two coordinate lines. The cone complex $\Sigma_X$ is $\bb R_{\geq 0}^2$. We fix numerical data $\Lambda$ as follows: the genus is $0$, the number of marked points $n$ is $1$, the curve class is $2[H]$, and the tangency order of the marked point with each of the divisors is $2$. 
     
    We now describe a map that is not fine logarithmic. The example is based on one from~\cite{NabijouRanganathan}. In $\mathbb P^1\times\mathbb P^1$ with affine coordinates $s,t$, let $C_0,C_1,C_2$ be the lines $t=0,s=0$ and $t=\infty$, with generic points $\eta_0,\eta_1,\eta_2$. Let $\ul C$ be their union. It is a nodal curve with two nodes $q_i=C_{i-1}\cap C_i$, $i\in \{1,2\}$. We mark $\ul C$ along the section $t=1$.

    Consider a strict map $(\Spec \bb C,M) \to \ca M(X,D)$, corresponding to a map $\phi \colon C \to X$, such that
    \begin{enumerate}
        \item $C$ has underlying scheme $\ul C$;
        \item $C_0,C_2$ do not land inside $D$;
        \item $C_1$ is contracted to the origin;
        \item $\phi^{\sf trop}$ has slope $1$ along both edges (oriented toward $C_1$).
    \end{enumerate}
    Let $\lambda,\mu\in\bb C^\times$ be the slopes of the lines $\phi(C_0)$ and $\phi(C_2)$ of $X=\bb P^2$. We make two claims: first, that $\lambda=\mu$, i.e. that $\phi(C_0)$ and $\phi(C_2)$ are the same line, and second, that this constraint disappears when we replace $D$ by $D_x$ (or $D_y$): for any $\lambda,\mu\in\bb C^\times$ there exists a log stable map $\phi_x \colon C \to (\ul X,D_x)$ satisfying (1) to (4), sending $C_0$ to the line $y=\lambda x$ and $C_2$ to the line $y=\mu x$. Suppose momentarily that both claims hold. Then the stratum of maps in 
    $$
    \ca M^{\sf nfs}(X,D)=\ca M(X,D_x) \times_{\ca M(X)} \ca M(X,D_y)
    $$ 
    satisfying (1) to (4) is two-dimensional, parametrized by $\lambda,\mu$ by the second claim, and its restriction to $\ca M(X,D)$ is the line $\lambda=\mu$ by the first claim, despite the absence of any tropical obstruction as in \ref{remark:tropical_obstruction}.
    
    Consider the first claim. Denote by $\pi$ the morphism $C \to (\Spec \bb C,M)$, and by $U$ the strict dense open $U:=\bb A^2_{x,y} \subset X$. Then $V:=C\times_X U$ is the complement of one smooth point of $C_0$ and one smooth point of $C_2$. Let $f_V$ be the image of $\on{log}y - \on{log}x$ under $\phi^* \colon M_X^{gp}(U) \to M_C^{gp}(V)$. By definition of $\lambda$ and $\mu$, we may glue $f_V$ with the sections $\lambda\in M_C^{gp}(C_0\backslash \{q_1\})$ and $\mu\in M_C^{gp}(C_2\backslash \{q_2\})$ to obtain a global section $f$ of $M_C^{gp}$. The map $\pi^*M_S^{gp} \to M_C^{gp}$ is surjective on global sections, e.g. by \cite[Lemma 4.6.1]{Molcho_Wise_2022}, so $f$ is constant with value $\lambda=\mu$.

    For the second claim, let $\ul C^\circ:=\ul C\backslash\{s=\infty\}$, so that $\ul C^\circ$ is the vanishing locus of $stt'$ inside $\bb A^1_s \times \bb P^1_{(t:t')}$. The map $\ul\phi^\circ \colon \ul C^\circ \to \bb A^2_{x,y}$ defined by $x \mapsto s, y \mapsto (t+\lambda s t')(t'+\mu s t)$ extends to a map $\ul\phi \colon \ul C \to \ul X$ which satisfies (1) to (3), and sends $C_0,C_2$ to the lines $y=\lambda x$ and $y=\mu x$ respectively. Make $\ul C \to \Spec \bb C$ a vertical log smooth curve $C \to (\Spec \bb C,\bb N\alpha)$ by giving length $\alpha$ to both nodes. We may now enrich $\ul\phi$ into a log scheme map $\phi \colon C \to (\ul X,D_x)$ satisfying (4) by sending $\on{log}x$ to $\on{log}s$, so the claim holds.
\end{example}

\section{Algorithms on monoids}\label{section:algorithms_on_monoids}

We now explain how to explicitly calculate the fine faces of a logarithmic scheme. As we have noted, in the examples we have in mind for applications -- moduli of varieties and stable maps -- the log schemes of interest arise as non-fine fibre products of fine log schemes. Computing charts and presentations of the charting monoids of such a fiber product from charts and presentations of the maps in the fibre product is straightforward. Hence, the relevant input to our algorithms amounts to local charts for a logarithmic scheme, and knowledge of generators and relations for the charting monoids.

\subsection{Pure difference schemes}
\begin{definition}
    A \emph{pure difference binomial} with coefficients in any ring is the difference of two monic monomials. Let $n\geq 0$ be an integer. A \emph{pure difference ideal} in $\bb A^n$ is an ideal generated by pure difference binomials. A \emph{pure difference scheme} is a closed subscheme of $\bb A^n$ cut out by a pure difference ideal.
\end{definition}

\begin{definition}
    Let $F$ be a monoid, and $R\subset F\times F$ a relation. We say $R$ is a \emph{monoidal relation} if for all $f\in F$, we have $(f,f)+R\subset R$. We observe that when $R$ is an equivalence relation, it is monoidal if and only if the monoid law on $F$ is compatible with the set-theoretic quotient $F/R$.
\end{definition}

\begin{remark}\label{remark:surj_monoid_maps_are_pure_diff_ideals}
    There is a natural equivalence between pure difference ideals of $\bb A^n$ (seen as a category whose arrows are inclusions) and surjective monoid maps $\NN^n \twoheadrightarrow M$. Dually, there is a natural equivalence between pure difference subschemes of $\bb A^n$ and monoidal relations on $\NN^n$.
\end{remark}

\begin{remark}
Pure difference subschemes of $\bb A^n$ can be endowed with the pullback of the natural log structure on $\bb A^n$, making them log schemes. By \ref{remark:surj_monoid_maps_are_pure_diff_ideals}, those are exactly the log schemes of the form $\Spec \ZZ[M]$ for some surjective monoid map $\NN^n \to M$.
\end{remark}

Throughout the remainder of this section, unless specified otherwise, we will be in the following setup.

\begin{situation}\label{situation:presentation_of_monoid}
    Let $\Phi\colon F \to M$ be a surjective map of monoids with $F$ free of finite rank $n$, and $R\subset F\times F$ the equivalence relation defining $M$. Let $\preceq$ be a total monoidal order on $F$. Denote by $(F\times F)^\succ$ the subset of $F\times F$ consisting of those $(a,b)$ with $a\succ b$. Observe that any equivalence relation on $F$ is the reflexive, symmetric closure of its intersection with $(F\times F)^\succ$. Denote by $\Delta$ the diagonal of $F\times F$.
\end{situation}

Some important algorithms related to computing and manipulating ideals (most notably multivariate division with remainder and Buchberger's algorithm for the computation of Gr\"obner bases) do not really use addition, but rather \emph{term elimination}: given two polynomials $f$ and $g$, multiply them by monomials until their ``biggest" terms match, and the difference of the resulting polynomials will be an element of the ideal $(f,g)$ which is potentially ``smaller". In \cite{EisenbudSturmfels}, Eisenbud and Sturmfels use the fact that term elimination preserves binomials to deduce closure properties of the class of binomial ideals in the affine space over a field. They give term-elimination-based algorithms to compute e.g. the radical and primary decomposition of a binomial ideal.

In \ref{situation:presentation_of_monoid}, $R$ is a monoidal relation so for any $(a,b)$ and $(c,d)$ in $R$, the ``term elimination relation" $(b+a\vee c-a, d+a\vee c-c)$ is also in $R$. This suggests that monoidal analogues of Gr\"obner basis algorithms would be a useful computing tool in tropical and logarithmic geometry.

We will define several objects which depend on the monomial order, but we will omit it from the notation when we can unambiguously do so.

\subsection{Gr\"obner bases}

\begin{definition}
    In \ref{situation:presentation_of_monoid}, an element $(c,d)$ of $(F\times F)^\succ$ is called a \emph{rewriting rule}, or just \emph{rule}, and written $c \to d$. Consider any $a,b$ in $F$ and any rule $r$ (resp. set of rules $B$). We say that $a$ \emph{may be rewritten as $b$ by applying $r$} (resp. \emph{by applying rules in $B$}), and we write $a\to_r b$ (resp. $a\to_B b$), whenever $(a,b)\in r+\Delta$\footnote{in other words, $\to_r$ is the monoidal closure of the relation $\{r\}\subset F\times F$} (resp. $(a,b)\in B+\Delta$). If $r=(a,b)$ is such that $a\prec b$, we define $\to_r$ as $\to_{(b,a)}$; and if $B$ is any subset of $F\times F$, we define $\to_B$ as $\to_{B^\succ}$, where $B^\succ$ is the intersection with $(F\times F)^\succ$ of the symmetric closure of $B$. The reflexive transitive closure of $\to_r$ (resp. $\to_B$) is written $\to_r^*$ (resp. $\to_B^*$). We talk of $\to_r^*$ (resp. $\to_B^*$) as \emph{rewriting by applying $r$ (resp. rules of $B$) repeatedly}. 
\end{definition}

\begin{definition}
	A \emph{basis} for $M$ is a subset of $F\times F$ whose symmetric, transitive, monoidal closure is $R$. For any $(a,b)$ in $F\times F$ with $a\neq b$, we call $a$ and $b$ the \emph{terms} of $(a,b)$ and $\inileq(a,b):=\max(a,b)$\footnote{The maximum is taken with respect to $\preceq$. By convention, $\ini(a,b)$ is undefined if $a=b$.} the \emph{initial term} of $(a,b)$. A \emph{Gr\"obner basis} for $M$ (with respect to $\preceq$) is a finite subset $\{r_i\}_{1\leq i\leq k}$ of $R\cap (F\times F)^\succ$ such that $\ini(R)$ is spanned by the $\inileq(r_i)$ as an ideal of $F$.
\end{definition}

\begin{lemma}[Macaulay]\label{lemma:normal_forms_exist}
	The restriction of $\Phi$ to $F\backslash\ini(R)$ is bijective.
\end{lemma}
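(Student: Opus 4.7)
The plan is to treat injectivity and surjectivity of $\Phi|_{F\setminus\ini(R)}$ separately. Before starting either, I would fix a reading of the notation: I take $\ini(R)$ to be the ideal of $F$ generated by the set of initial terms $\{\inileq(a,b) : (a,b)\in R,\ a\neq b\}$. This is the interpretation implicit in the Gröbner basis definition above, and under it both halves of the lemma become essentially direct unwindings of the definitions of \emph{monoidal relation} and \emph{monoidal order}.

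Injectivity I would dispatch first, as it is nearly tautological: given distinct $a,b\in F\setminus\ini(R)$ with $\Phi(a)=\Phi(b)$, the pair $(a,b)$ lies in $R$, and after swapping we may assume $a\succ b$. But then $a=\inileq(a,b)$ is one of the chosen generators of $\ini(R)$, so $a\in\ini(R)$, contradicting the hypothesis on $a$.

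For surjectivity, I would fix $m\in M$ and choose a $\preceq$-minimum $a$ of the nonempty preimage $\Phi^{-1}(m)$, then argue $a\notin\ini(R)$ by contradiction. If $a$ lay in $\ini(R)$, I could write $a=c+d$ for some $c\in F$ and some $(d,e)\in R$ with $d\succ e$. Monoidality of $R$ gives $(c+d,c+e)\in R$, so $\Phi(c+e)=\Phi(c+d)=m$; monoidality of $\preceq$ gives $c+e\prec c+d=a$; together these contradict the minimality of $a$.

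The single genuine input beyond the formal manipulations above is the existence of that minimum, i.e.\ the well-foundedness of $\preceq$. This is the standard additional axiom on monomial orders on finitely generated free monoids, and I would expect the paper's notion of a monoidal total order on $F$ either to build it in explicitly or to derive it quickly from finite rank of $F$. Once it is on the table, the entire argument above reduces to a one-step rewriting by a leading term — precisely the mechanism that underlies multivariate division with remainder — so no further ingredient is needed, and this is the main (and really only) obstacle to the proof.
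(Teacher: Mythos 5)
Your proof is correct and takes essentially the same approach as the paper: both arguments pick the $\preceq$-minimum $a$ of a fiber $\Phi^{-1}(m)$ and use monoidality of $R$ and of $\preceq$ to show that $a$ is the unique fiber element outside $\ini(R)$. The paper does injectivity and surjectivity in one stroke by characterizing $\Phi^{-1}(m)\cap(F\setminus\ini(R))=\{a\}$, whereas you split them, but the underlying mechanism (including your correct observation that well-foundedness of $\preceq$ is the one genuine ingredient) is identical.
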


\begin{proof}
    Consider any $m\in M$. Then $\Phi^{-1}(\{m\})$ has a minimum $a$. For any $b\in\Phi^{-1}(\{m\})$, we have $(a,b)\in R$ so $\ini(b)\in\ini(R)$ if and only if $a\neq b$. Thus, the intersection of $\Phi^{-1}(\{m\})$ with $F\backslash\ini(R)$ is precisely $\{a\}$ and $\Phi|_{F\backslash\ini(R)}$ is bijective.
\end{proof}

\begin{definition}\label{definition:normal_form}
    Let $a$ be an element of $F$. We call \emph{$R$-normal form} of $a$ the minimum of its $R$-equivalence class. If $R$ is clear from context, we will omit it from the notation. The map $F \to M \to F\backslash\ini(R)$, where the second arrow is the inverse of the bijection in \ref{lemma:normal_forms_exist}, takes an element to its normal form.
\end{definition}

\begin{lemma}\label{lemma:Grobner_bases_are_bases}
    A Gr\"obner basis for $M$ is a basis.
\end{lemma}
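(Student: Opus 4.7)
The plan is to show that any Gröbner basis $B=\{r_1,\dots,r_k\}$ for $M$ generates, as a symmetric--transitive--monoidal closure, all of $R$. Let $R'\subset F\times F$ denote this closure. Since each $r_i$ lies in $R$ and $R$ itself is a symmetric, transitive, monoidal relation, we immediately have $R'\subset R$. The content is the reverse inclusion.

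The first step is to compare the initial ideals of $R$ and $R'$. The relation $R'$ is monoidal, so if $(a,b)\in R'$ with $a\succ b$ then for every $f\in F$ also $(a+f,b+f)\in R'$; since $\preceq$ is monoidal, $a+f\succ b+f$ and hence $\inileq(a+f,b+f)=\ini(a,b)+f$ lies in $\ini(R')$. Thus $\ini(R')$ is an ideal of $F$. It contains each $\inileq(r_i)$, so by the Gröbner-basis hypothesis it contains all of $\ini(R)$. Combined with the trivial inclusion $\ini(R')\subset\ini(R)$ coming from $R'\subset R$, this gives the equality $\ini(R')=\ini(R)$.

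Now I invoke \ref{lemma:normal_forms_exist} twice: for the surjection $F\twoheadrightarrow F/R=M$ and for the surjection $F\twoheadrightarrow F/R'$. Applied to $R$, it says every element of $F$ has a unique $R$-normal form in $F\setminus\ini(R)$; applied to $R'$, the analogous statement holds for $R'$-normal forms, which by the previous paragraph lie in the same subset $F\setminus\ini(R)$. Given any $(a,b)\in R$, let $a',b'\in F\setminus\ini(R')=F\setminus\ini(R)$ be the $R'$-normal forms of $a$ and $b$ respectively. Then $(a,a'),(b,b')\in R'\subset R$, and combining with $(a,b)\in R$ via symmetry and transitivity of $R$ gives $(a',b')\in R$. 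But $a'$ and $b'$ are both in $F\setminus\ini(R)$, where $\Phi$ is injective by \ref{lemma:normal_forms_exist}, so $a'=b'$. Using symmetry and transitivity of $R'$ we conclude $(a,b)\in R'$, as desired.

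The whole argument is essentially bookkeeping once one has $\ini(R')=\ini(R)$; the only mildly subtle point is verifying that $\ini(R')$ really is an ideal, which is where the monoidal nature of the order $\preceq$ and of the relation $R'$ intervene. Everything else is a direct application of Macaulay's lemma to both relations simultaneously.
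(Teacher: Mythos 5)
Your argument is correct, and it takes a genuinely different route from the paper's. The paper's proof is a terse rewriting argument: it reduces the claim to showing that every $a\in F$ can be rewritten by rules of $B$ down to its $R$-normal form, which follows because $\ini(R)=\ini(B)+F$ guarantees a rule of $B$ is applicable to any non-normal element, and because $\preceq$ is well-founded so the rewriting terminates. You instead avoid any termination argument for the rewriting process: you let $R'$ be the closure generated by $B$, observe that $\ini(R')$ is an ideal containing all the $\inileq(r_i)$ (so $\ini(R')=\ini(R)$), and then invoke Macaulay's lemma twice --- once for $R$, once for $R'$ --- to deduce that the $R'$-normal forms of $R$-equivalent elements live in the same complement $F\setminus\ini(R)$, where $\Phi$ is injective, hence coincide. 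This is cleaner in that it uses \ref{lemma:normal_forms_exist} as a black box rather than unwinding it into a descent-of-rewriting argument, though it buries the same use of well-foundedness inside the appeal to Macaulay for $R'$. One small point of hygiene, shared with the paper: you should note that the ``symmetric, transitive, monoidal closure'' $R'$ is understood to include the diagonal, so that it is an equivalence relation and $F/R'$ and the corresponding normal forms make sense.
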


\begin{proof}
    Let $B$ be a Gr\"obner basis. We want to show that $B$ generates $R$, which reduces to showing that for any $a\in F$, we have $a \to_B^* b$ where $b$ is the $R$-normal form of $a$. This follows from \ref{lemma:normal_forms_exist} and the fact that $\ini(R)=\ini(B)+F$ since $B$ is a Gr\"obner basis.
\end{proof}

\begin{lemma}\label{lemma:ACC_for_free_monoids}
    Any ascending chain of ideals of $\NN^n$ is stationary.
\end{lemma}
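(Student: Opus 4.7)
The claim is a monoidal analogue of the Hilbert basis theorem, and the cleanest route is via Dickson's lemma. The plan is first to reduce the ACC to the assertion that every ideal of $\NN^n$ is finitely generated, then to establish finite generation via the well-quasi-order property of $\NN^n$.

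First I would observe that an ideal $I\subset\NN^n$ is entirely determined by the set $\min(I)$ of its $\leq$-minimal elements (with respect to the coordinatewise partial order), since $I=\min(I)+\NN^n$. Thus finite generation of $I$ is equivalent to finiteness of $\min(I)$. The core combinatorial input is then Dickson's lemma: in $\NN^n$, any subset has only finitely many minimal elements for the coordinatewise order. I would either take this as a standard input or prove it by induction on $n$: the base case $n=1$ is the well-ordering of $\NN$, and for the inductive step one observes that if $a_1,a_2,\ldots$ is an infinite sequence in $\NN^n$, applying the induction hypothesis to the projection onto $n-1$ coordinates and then using well-ordering on the remaining coordinate yields an infinite non-decreasing subsequence; so the set of minimal elements cannot be infinite.

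With Dickson's lemma in hand, every ideal of $\NN^n$ is finitely generated. To conclude the ACC, I would take an ascending chain $I_1\subseteq I_2\subseteq\cdots$, form the union $I:=\bigcup_k I_k$ (which is again an ideal because ideal-hood is preserved under directed unions), and pick finite generators $a_1,\ldots,a_r$ of $I$. Each $a_i$ lies in some $I_{k_i}$; setting $K:=\max_i k_i$ gives $\{a_1,\ldots,a_r\}\subset I_K$, so $I_K=I$ and the chain is stationary from index $K$ onward.

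The only nontrivial step is Dickson's lemma itself; everything else is formal. Since this is standard, I expect the proof in the paper to simply invoke Dickson's lemma, or equivalently to deduce the statement from the Noetherianity of the monomial ideals of $\bb Z[x_1,\ldots,x_n]$ via the bijection between ideals of $\NN^n$ and monomial ideals of this polynomial ring, which is an immediate consequence of Hilbert's basis theorem.
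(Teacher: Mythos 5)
Your proof is correct, and your concluding guess is right: the paper takes the second route you describe, transporting to the polynomial ring. Explicitly, the paper observes that ideals of the monoid $\NN^n$ correspond, order-preservingly, to monomial ideals of $\ZZ[x_1,\ldots,x_n]$, and then invokes Noetherianity of that ring (Hilbert's basis theorem). Your main argument instead goes through Dickson's lemma directly: every subset of $\NN^n$ has finitely many minimal elements for the coordinatewise order, hence every ideal is finitely generated, hence ACC holds by the usual union-of-the-chain argument. The two are essentially equivalent in content --- Dickson's lemma is the combinatorial heart of why monomial ideals satisfy ACC --- but the tradeoffs differ slightly. Your route is self-contained and purely combinatorial, never leaving the category of monoids; the paper's route is a one-liner once Hilbert's basis theorem is taken as known, which in a paper about logarithmic geometry is certainly a safe assumption. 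One small point worth being careful about in your inductive proof of Dickson's lemma: passing to a subsequence that is non-decreasing in the first $n-1$ coordinates and then again in the last coordinate is the cleanest way to phrase it, but the way you wrote it ("applying the induction hypothesis to the projection ... and then using well-ordering") elides a step --- the induction hypothesis as stated (finitely many minimal elements) is not quite the same as the statement you need (every infinite sequence has an infinite non-decreasing subsequence, i.e. $\NN^{n-1}$ is a well-quasi-order); these are equivalent, but the equivalence is itself a small argument. This does not affect correctness of the overall proof, since Dickson's lemma is standard, but if you were to spell it out you'd want to phrase the inductive hypothesis as the wqo property rather than the finitely-many-minimal-elements property.
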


\begin{proof}
    There is an order-preserving bijection between ideals of the monoid $\NN^n$ and ideals of the ring $\ZZ[x_1,...,x_n]$ which are generated by monic monomials, so this follows from the Noetherianity of $\ZZ[x_1,...,x_n]$.
\end{proof}

\begin{definition}
    A Gr\"obner basis $B$ of $M$ is called \emph{reduced} if for any $r\in B$, the ideal $\ini(r)+F$ of $F$ contains none of the terms of $B\backslash\{r\}$.
\end{definition}

\begin{proposition}
    Let $F \to M$ be a surjective monoid map with $F$ free of rank $n$, and $\preceq$ be a monomial order on $F$. Let $R\subset F\times F$ be the relation defining $M$, and $\{a_i\}_{i\in I}$ be the minimal generating set of the ideal $\ini(R)$ of $F$. For each $i$, call $b_i$ the $R$-normal form of $a_i$. Then, $B:=\{(a_i,b_i)\}$ is the unique reduced Gr\"obner basis for $M$ with respect to $\preceq$.
\end{proposition}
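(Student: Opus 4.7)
The plan is to prove existence (that the given $B$ is a reduced Gr\"obner basis) and uniqueness separately. The essential idea is that reducedness forces the set of initial terms of any reduced Gr\"obner basis to coincide with the unique minimal generating set of $\ini(R)$, after which the other terms are determined as $R$-normal forms.

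For existence, I first check that $B$ is a Gr\"obner basis. The set $\{a_i\}_{i\in I}$ is finite: by \ref{lemma:ACC_for_free_monoids}, any ideal of $F=\NN^n$ has a unique minimal generating set, consisting of its componentwise-minimal elements, and that set is finite. For each $i$, the pair $(a_i,b_i)$ lies in $R$ by \ref{definition:normal_form}, and $a_i \succ b_i$ because $a_i \in \ini(R)$ while the normal form $b_i$ lies in $F \setminus \ini(R)$ by \ref{lemma:normal_forms_exist}; in particular $a_i \neq b_i$. Thus $B \subset R \cap (F \times F)^\succ$, and the set of initial terms $\{\inileq(r) : r \in B\} = \{a_i\}$ spans $\ini(R)$ as an ideal of $F$ by hypothesis. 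To check reducedness, fix $r = (a_i, b_i) \in B$; the terms of $B \setminus \{r\}$ are the $a_j$ and $b_j$ with $j \neq i$. Minimality of the generating set prevents $a_j \in a_i + F$ for $j \neq i$ (otherwise one could drop $a_j$), and $b_j \in a_i + F$ would force $b_j \in \ini(R)$, contradicting $b_j \in F \setminus \ini(R)$.

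For uniqueness, let $B' = \{(a'_i, b'_i)\}$ be any reduced Gr\"obner basis. The Gr\"obner property gives that $\{a'_i\}$ generates $\ini(R)$, and reducedness forces $a'_j \notin a'_i + F$ whenever $j \neq i$, so $\{a'_i\}$ is a minimal generating set of $\ini(R)$. By uniqueness of the minimal generating set in $\NN^n$, one has $\{a'_i\} = \{a_i\}$, and after reindexing we may assume $a'_i = a_i$. It remains to show $b'_i = b_i$. Since $(a_i, b_i)$ and $(a_i, b'_i)$ both lie in $R$ we have $b_i \sim_R b'_i$, and by \ref{lemma:normal_forms_exist} it suffices to prove $b'_i \notin \ini(R)$, which will force $b'_i = b_i$. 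Suppose for contradiction $b'_i \in \ini(R)$; then $b'_i \in a_k + F$ for some $k$. If $k = i$, monotonicity of $\preceq$ forces $b'_i \succeq a_i$, contradicting the rule condition $a_i \succ b'_i$. If $k \neq i$, then $b'_i$ is a term of the rule $(a_i, b'_i) \in B' \setminus \{(a_k, b'_k)\}$ lying in $a_k + F$, contradicting reducedness of $B'$ applied to $(a_k, b'_k)$.

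The only delicate step is this case analysis at the end of the uniqueness argument. The stated definition of reducedness bars only terms of \emph{other} rules from lying in $\ini(r) + F$, so it does not directly exclude $b'_i \in a_i + F$; one must rule this out separately, which is precisely where the monotonicity of the monomial order (and hence the assumption that $\preceq$ is a well-ordering compatible with the monoid structure) comes into play.
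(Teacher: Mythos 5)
Your proof is correct and takes the same approach as the paper: identify the initial terms of any reduced Gr\"obner basis with the unique minimal generating set of $\ini(R)$ using the uniqueness of minimal generating sets in $\NN^n$, then pin down the remaining terms as $R$-normal forms via \ref{lemma:normal_forms_exist}. Your closing observation is also a fair one — the paper's claimed equivalence ``$B''$ is a reduced Gr\"obner basis if and only if for each $(i,j)$, the term $c_i$ is not in $a_j + F$'' does quietly use the monoidal order to exclude the $i=j$ case, which the stated definition of reducedness (referring only to terms of \emph{other} rules) does not cover by itself.
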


\begin{proof}
    The minimal generating set $\{a_i\}_{i\in I}$ of $\ini(R)$ exists and is finite by \ref{lemma:ACC_for_free_monoids}. If $B'$ is a reduced Gr\"obner basis, then the initial terms of $B'$ must contain the $a_i$ (by Gr\"obner-ness), and they must be exactly the $a_i$, each of which can appear only once (by reducedness). Thus, we have reduced to showing that a set $B''\subset (F\times F)^\succ$ of the form $\{(a_i,c_i)\}_{i\in I}$ is a reduced Gr\"obner basis if and only if $B''=B$. But $B''$ is a reduced Gr\"obner basis if and only if for each $(i,j)$, the term $c_i$ is not in $a_j+F$. This is equivalent to the $c_i$ not being in $\ini(R)$, which, by \ref{lemma:normal_forms_exist}, is in turn equivalent to $c_i=b_i$ for each $i$.
\end{proof}

\subsection{Buchberger's algorithm}

\begin{algorithm}[Buchberger]
    This algorithm takes as input a finite set of rules $B^0\subset (F\times F)^\succ$, and outputs a Gr\"obner basis for the monoidal relation $R$ generated by $B^0$. Suppose we have defined a finite set of rules $B^n=\{r_i=(a_i,b_i)\}_{i\in I}\subset (F\times F)^\succ$ for some $n\in\NN$. For each $i,j$ in $I$, let $r_{ij}\in R$ be the relation $(b_i+a_i\vee a_j-a_i, b_j+a_i\vee a_j-a_j)$ (``eliminate the initial terms of $r_i$ and $r_j$"). As long as a rule in $B^n$ can be applied to a term of $r_{ij}$, apply it and replace that term by the result. This makes one of the terms of $r_{ij}$ decrease and does not change the other, so the process must terminate. If all the resulting $r_{ij}$'s are trivial (i.e. lie in the diagonal of $F\times F$), stop there and output $B^n$. Otherwise, define $B^{n+1}$ by adjoining to $B^n$ the nontrivial $r_{ij}$'s.
\end{algorithm}

\begin{proof}
    For each $n$ such that $B^n$ and $B^{n+1}$ are defined, the ideal $\ini(B^{n+1})$ strictly contains $\ini(B^n)$. Thus, the algorithm terminates.
    
    The output is a set of rules $B=\{a_i \to b_i\}_{i\in I}$ such that for each $i,j$ in $I$, the ``term elimination" $r_{ij}:=(b_i+a_i\vee a_j-a_i, b_j+a_i\vee a_j-a_j)$ can be rewritten as a trivial relation using rules in $B$. We must show that $B$ is a Gr\"obner basis, i.e. that the $a_i$ generate $\ini(R)$. Suppose they do not, and seek a contradiction. Pick some $r=(c,d)$ in $R$ with $c\succ d$, such that $c\notin\ini(B)+F$. Since $B$ generates $R$, there is a finite rewriting sequence
    \begin{equation}\label{rewriting_seq}
        c_0:=c \leftrightarrow_B c_1 \leftrightarrow_B c_2\leftrightarrow_B\dots \leftrightarrow_B c_k:=d.
    \end{equation}
    We may assume that both $r$ and \ref{rewriting_seq} have been chosen to minimize the maximum $m$ of the $c_i$ with respect to $\preceq$. We now claim that we can find a new rewriting sequence from $c$ to $d$ which only involves terms $\preceq m$, and involves $m$ strictly less times than \ref{rewriting_seq} does. By induction, this contradicts the minimality of $m$ and concludes the proof. We will now prove the claim.
    
    Let $1\leq l\leq k$ be such that $c_l=m$. Then, we have $l\neq 1$ and $l\neq k$, and $c_l$ can be rewritten as $c_{l-1}$ and as $c_{l+1}$ using rules of $B$, say $a_i \to b_i$ and $a_j \to b_j$ respectively. If $i=j$, then $c_{l-1}=c_{l+1}$ and we may delete $c_l$ from the rewriting sequence altogether. Thus, we may assume $i\neq j$. This means $c_l=m$ is greater than both $a_i$ and $a_j$ for the partial order $\leq$, so there is some $d$ such that $m=a_i\vee a_j+d$. Therefore, we may write
    \begin{align*}
        c_{l-1} & = d+b_i+a_i\vee a_j-a_i \\
        c_{l+1} & = d+b_j+a_i\vee a_j-a_j,
    \end{align*}
    i.e. $(c_{l-1},c_{l+1})=(d,d)+r_{ij}$. Since $r_{ij}$ can be rewritten as a trivial relation using rules in $B$, there exists a rewriting sequence with rules of $B$ from $c_{l-1}$ to $c_{l+1}$ which only involves terms smaller than $\max(c_{l-1},c_{l+1})$. We may now substitute the subsequence $c_{l-1} \leftarrow_B c_l \to_B c_{l+1}$ of \ref{rewriting_seq} with this sequence, which proves the claim since $\max(c_{l-1},c_{l+1})\prec m$.
    \end{proof}

\subsection{Localization and integralization}

Here, we show that pure difference ideals are preserved by localization with respect to monomial ideals and by Zariski closure with respect to such localizations. We interpret the integralization of a monoid in terms of localization and closure.

\begin{lemma}(localization)\label{lemma:localization}
    In \ref{situation:presentation_of_monoid}, let $x$ be an element of $F$. Put $F'=F\oplus \NN t$, let $R'\subset F'\times F'$ be the monoidal relation generated by $R$ and by $(t+x,0)$, and call $M'$ the quotient $F'/R'$. Then $M \to M'$ is minimal among all monoid maps $M \to N$ such that $x$ is invertible in $N$. In particular, up to a unique isomorphism $M'$ only depends on $x$ via the smallest face of $M$ containing $x$. If $x$ is in the interior $M^\circ$ of $M$ (the complement of all strict subfaces), then $M'=M^\gp$.
\end{lemma}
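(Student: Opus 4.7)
The plan is to derive the entire statement from the universal property that $\iota\colon M\to M'$ induced by $F\hookrightarrow F'$ is initial among monoid maps out of $M$ inverting the image $\Phi(x)\in M$.

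To verify this universal property, let $\bar t\in M'$ denote the image of $t$. The relation $(t+x,0)\in R'$ yields $\bar t+\iota(\Phi(x))=0$ in $M'$, exhibiting $\bar t$ as an inverse of $\iota(\Phi(x))$. Conversely, given a monoid map $f\colon M\to N$ with $f(\Phi(x))$ having an inverse $u\in N$, I extend $f\circ\Phi\colon F\to N$ to $\tilde f\colon F'\to N$ by $\tilde f(t)=u$; this respects $R$ by construction and kills $(t+x,0)$ by choice of $u$, so it descends to a unique monoid map $M'\to N$ extending $f$.

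The key non-formal step is to show that $\iota$ inverts the whole smallest face $F_x\subseteq M$ containing $\Phi(x)$, not just $\Phi(x)$ itself. By definition of $F_x$, for any $y\in F_x$ there exist $z\in M$ and $n\in\NN$ with $y+z=n\Phi(x)$; inside $M'$, the element $n\bar t+\iota(z)$ then satisfies
\[
\iota(y)+n\bar t+\iota(z)=n\iota(\Phi(x))+n\bar t=0,
\]
so $\iota(y)$ is invertible. The same argument shows that any monoid map $M\to N$ inverting $\Phi(x)$ inverts all of $F_x$, and hence inverts any other $x'\in F$ whose image in $M$ generates the same face $F_x$. Applying the universal property both to $x$ and to $x'$ yields canonical mutually inverse maps between the corresponding localizations, giving the face-dependence claim.

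For the final assertion, if $\Phi(x)\in M^\circ$ then $F_x=M$, since any face of $M$ containing a point outside every proper subface must be $M$ itself. By the previous step $\iota$ inverts every element of $M$, and $M'$ is generated as a monoid by $\iota(M)$ together with $\bar t$, hence is a group. The universal property of $M'$ then coincides with that of $M^\gp$, since any monoid map from $M$ to a group automatically inverts $\Phi(x)$; so $M'\cong M^\gp$ canonically.

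The main obstacle is the face-dependence step: all of the other content is a direct application of universal properties, but identifying which face of $M$ gets inverted requires the explicit inverse construction above, which is where the definition of $F_x$ is actually used.
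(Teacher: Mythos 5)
Your proof is correct and follows the same idea as the paper's (the paper's entire proof is the one-line observation that $M'$ is constructed by formally adjoining an inverse of $x$ to $M$, i.e.\ the universal property); you simply spell out the details, including the useful observation that the smallest face containing $\Phi(x)$ is $\{y\in M : y+z=n\Phi(x)\ \text{for some}\ z\in M,\ n\in\NN\}$, which makes the face-dependence and $M^\gp$ claims transparent.
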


\begin{proof}
    We have constructed $M'$ by formally adjoining to $M$ an inverse of $x$.
\end{proof}

\begin{lemma}[Eliminating a variable]\label{lemma:eliminating_a_variable}
    In \ref{situation:presentation_of_monoid}, write $F=F_0\oplus t$ for some basis element $t\in F$ and assume $t\succ F_0$. If $B$ is a Gr\"obner basis for $R$, then $B_0:=B\cap (F_0\times F_0)$ is a Gr\"obner basis for $R_0:=R\cap (F_0 \times F_0)$.
\end{lemma}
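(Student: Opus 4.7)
The plan is to verify the two defining properties of a Gr\"obner basis for $R_0$ on $F_0$. First, $R_0$ is a monoidal relation on $F_0$: given $(a,b)\in R_0$ and $f\in F_0$, the pair $(a+f,b+f)$ lies in $R$ by monoidality and in $F_0\times F_0$ by construction, hence in $R_0$. Second, $B_0$ is a finite subset of $R_0\cap(F_0\times F_0)^\succ$ by construction. The entire content of the lemma is therefore to show that the initial terms of the elements of $B_0$ span $\ini(R_0)$ as an ideal of $F_0$.

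The crucial step is to unpack the hypothesis $t\succ F_0$ as the usual elimination-order property: any element $c = a + kt$ of $F$ with $a\in F_0$ and $k\geq 1$ is strictly larger (for $\preceq$) than every element of $F_0$. Granted this, two consequences will drive the argument:
\begin{itemize}
\item[(i)] If $(c,d)\in R$ with $c\succ d$ and $c\in F_0$, then $d\in F_0$ as well, since otherwise the elimination property forces $d\succ c$.
\item[(ii)] If $\alpha\in F_0$ is written as $\ini(r) + f$ with $r\in B$ and $f\in F$, then the unique $F_0\oplus\NN t$-decompositions of $\ini(r)$ and of $f$ both have zero $t$-component (else $\ini(r)+f$ would have a nonzero $t$-component).
\end{itemize}

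With (i) and (ii) in hand the proof concludes quickly. Given $\alpha\in\ini(R_0)\subseteq\ini(R)$, use the Gr\"obner basis property of $B$ on $F$ to write $\alpha = \ini(r)+f$ with $r\in B$ and $f\in F$. By (ii), $\ini(r)\in F_0$ and $f\in F_0$; by (i) applied to $r$, we also get $r\in B_0$. Thus $\alpha\in\ini(B_0)+F_0$. The reverse inclusion $\ini(B_0)+F_0\subseteq \ini(R_0)$ is immediate since $B_0\subset R_0$ and $\ini(R_0)$ is an ideal of $F_0$.

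The only real subtlety is the elimination-order property itself. From $t\succ a$ for every $a\in F_0$, together with monoidality, an induction on $k$ yields $kt \succ a$ for all $k\geq 1$ and $a\in F_0$ (the step is $(k{+}1)t = kt + t \succ kt + 0 = kt \succ a$). Then for $a,c'\in F_0$ one has $a + kt \succeq kt \succ c'$, where the first inequality uses $0\preceq a$ (a standard consequence of any well-ordered monoidal order). Everything else in the proof is routine bookkeeping on the direct-sum decomposition $F = F_0\oplus\NN t$.
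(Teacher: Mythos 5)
Your proof is correct and follows the same underlying idea as the paper's (very terse) argument: take $\alpha\in\ini(R_0)$, use the Gr\"obner property of $B$ over $F$ to write $\alpha=\ini(r)+f$, and use the elimination order $t\succ F_0$ to force $r\in B_0$ and $f\in F_0$. You are more careful than the paper, which compresses this to ``there is a relation in $B$ with initial term $f$'' (really: whose initial term divides $f$) and leaves implicit the bookkeeping about $t$-components and the second term of $r$; your points (i) and (ii) fill exactly those gaps.
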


\begin{proof}
    It suffices to show that $\ini(R_0)\subset \ini(B_0)$. Pick any $f\in \ini(R_0)$. Since $B$ is a Gr\"obner basis for $R$, there is a relation in $B$ with initial term $f$. Since $t\succ F_0$, this relation must be in $B_0$.
\end{proof}

\begin{definition}\label{definition:quotient_binomial_ideal}
    Let $F$ be a monoid and let $R\subset F\times F$ be a monoidal relation. Let $x$ be an element of $R$. We denote by $(R:x^\infty)$ the set of those $(a,b)\in F\times F$ such that $(a+nx,b+nx)$ is in $R$ for some positive integer $n$.
\end{definition}

\begin{remark}\label{remark:integralization_is_an_ideal_quotient}
    With the notations of \ref{definition:quotient_binomial_ideal}, put $M=F/R$ and suppose $x\in F^\circ$. Then $M^\int=F/(R:x^\infty)$.
\end{remark}

\begin{lemma}\label{lemma:quotient_monoid_cuts_out_zariski_closure_of_localization}
    Under the hypotheses of \ref{lemma:localization}, the map $\Spec \bb Z[(M:x^\infty)] \to \Spec \bb Z[M]$ is the Zariski closure of the open immersion $\Spec \bb Z[M']=\Spec \bb Z[M][x^{-1}] \to \Spec \bb Z[M]$.
\end{lemma}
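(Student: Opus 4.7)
\emph{Proof proposal.} The plan is to describe both sides as explicit subquotients of $\bb Z[F]$ and compare their defining ideals. The scheme-theoretic Zariski closure of the open immersion $\Spec \bb Z[M][x^{-1}] \to \Spec \bb Z[M]$ is $\Spec(\bb Z[M]/I)$, where $I$ is the kernel of the localization map, that is,
\[
I = \{f \in \bb Z[M] : x^n f = 0 \text{ in } \bb Z[M] \text{ for some } n \geq 0\}.
\]
On the other hand, unpacking \ref{definition:quotient_binomial_ideal}, the ring $\bb Z[(M:x^\infty)]$ is $\bb Z[F]/J$, where $J$ is the binomial ideal generated by the differences $a - b$ as $(a,b)$ ranges over $(R:x^\infty)$. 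The lemma reduces to showing that the image of $J$ in $\bb Z[M]$ equals $I$.

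One inclusion is immediate from the definition: each generator $a-b$ of $J$ satisfies $(a+nx,\, b+nx) \in R$ for some $n$, so $x^n(a - b)$ vanishes in $\bb Z[M]$; hence $a-b$ lies in $I$. For the reverse inclusion I would exploit the fact that $\bb Z[M]$ admits a canonical $\bb Z$-module basis indexed by $M$. Write an element $f \in I$ as a finite sum $\sum_i c_i [m_i]$ with distinct $m_i \in M$ and nonzero $c_i \in \bb Z$, and pick $n$ with $x^n f = 0$ in $\bb Z[M]$. Then $\sum_i c_i [nx + m_i] = 0$ in $\bb Z[M]$, and the basis property forces the indices to partition into groups within which $nx + m_i$ is constant in $M$ and the corresponding $c_i$ sum to zero. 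Picking lifts $\tilde m_i \in F$, the equality $nx + m_i = nx + m_j$ in $M$ is precisely the statement $(\tilde m_i, \tilde m_j) \in (R:x^\infty)$; hence within each group all the basis elements become identified in $\bb Z[(M:x^\infty)]$. Summing with coefficients that cancel inside each group, $f$ maps to zero in $\bb Z[(M:x^\infty)]$, which shows $I$ lies in the image of $J$.

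The only real difficulty here is the combinatorial bookkeeping of the last step: one has to reorganize a vanishing expression in $\bb Z[M]$ by collecting those basis terms whose $nx$-translates collide in $M$, and recognize the resulting coincidences as instances of $(R:x^\infty)$. Once this is in hand, the two quotients agree as ideals in $\bb Z[M]$, which is exactly the claim.
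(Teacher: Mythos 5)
Your proof is correct and uses essentially the same mechanism as the paper's: the paper notes that $(M:x^\infty)$ is the smallest quotient of $M$ through which $M\to M'$ factors and then asserts this "remains true after applying $\bb Z[-]$", and your careful tracking of the $M$-indexed $\bb Z$-basis of $\bb Z[M]$ to partition colliding terms and cancel coefficients is precisely the content that justifies that last assertion. You have unpacked the detail the paper leaves implicit rather than taken a different route.
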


\begin{proof}
    Two elements $a,b$ of $M$ have the same image in $M'$ if and only if $a+nx=b+nx$ for some $n\in \bb N$, so $(M:x^\infty)$ is the smallest quotient of $M$ through which $M \to M'$ factors, and the same remains true after applying $\bb Z[-]$.
\end{proof}

\begin{proposition}[see \cite{EisenbudSturmfels}, Corollary 1.7]\label{proposition:quotient_relation_computable_by_eliminating_variable}
    In \ref{situation:presentation_of_monoid}, let $x$ be an element of $F$. Put $F'=F\oplus \NN t$ and let $R'\subset F'\times F'$ be the monoidal relation generated by $R$ and by $(t+x,0)$. Then $(R:x^\infty)=R'\cap(F \times F)$.
\end{proposition}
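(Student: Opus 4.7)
The plan is to deduce this directly from \ref{lemma:localization}, which identifies $M':=F'/R'$ with the monoid localization $M[x^{-1}]$, combined with the standard criterion for equality of elements in a commutative monoid localization. No Gr\"obner-basis machinery is needed for the existential statement.

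For the inclusion $(R:x^\infty)\subseteq R'\cap (F\times F)$, I would take $(a,b)\in F\times F$ with $(a+nx,b+nx)\in R$ for some $n\in\NN$ and produce $(a,b)\in R'$ using only the axioms of a monoidal equivalence relation (adding a fixed element to both sides, symmetry, transitivity). First, monoidality of $R'$ upgrades the generating relation $(t+x,0)$ to $(n(t+x),0)\in R'$ by a short induction on $n$. Adding $a$, respectively $b$, to both sides gives $(a+nt+nx,a)\in R'$ and $(b+nt+nx,b)\in R'$. Next, since $R\subseteq R'$ and $R'$ is monoidal, $(a+nx,b+nx)\in R$ yields $(a+nt+nx,b+nt+nx)\in R'$ after adding $nt$ to both sides. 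Chaining by transitivity produces $a\sim_{R'} a+nt+nx \sim_{R'} b+nt+nx \sim_{R'} b$, so $(a,b)\in R'\cap (F\times F)$.

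For the reverse inclusion, let $(a,b)\in R'\cap (F\times F)$. Then $a,b\in F$ have the same image in $M'=F'/R'$. By \ref{lemma:localization}, $M'$ is canonically identified with the monoid localization $M[x^{-1}]$. The standard criterion for equality in a commutative monoid localization says that two elements $a,b\in M$ agree in $M[x^{-1}]$ if and only if there exists $n\in\NN$ with $a+nx=b+nx$ in $M$; equivalently $(a+nx,b+nx)\in R$. This is precisely the condition $(a,b)\in (R:x^\infty)$.

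The only real pitfall is the bookkeeping in the forward direction: one must be careful to only use operations allowed by the definition of a monoidal equivalence relation, and never ``add two relations term-wise'', which is not a legal operation on $R'$. Once that is isolated, the reverse direction is automatic from the universal property of $M'$ already recorded in \ref{lemma:localization}. A more computational route, closer in spirit to \cite{EisenbudSturmfels}, would instead apply Buchberger's algorithm to $R\cup\{(t+x,0)\}$ for a term order making $t\succ F$ and extract a Gr\"obner basis of $(R:x^\infty)$ via \ref{lemma:eliminating_a_variable}; this is effectivizing but unnecessary here.
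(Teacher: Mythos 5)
Your proof is correct, but it takes a genuinely different route from the paper's. The paper proves the reverse inclusion by a direct combinatorial argument: it considers the set of rewriting sequences in $F'$ connecting $a+mx$ to $b+mx$ (for varying $m$) using $R$-moves and the rule $t+x\to 0$, takes one of minimal length, and shows by a ``shortening'' argument — pairing the first forward application of $t+x\to 0$ with the first backward one and adding $x$ throughout to cancel the $t$'s — that the minimal length must be $2$, which forces $(a+mx,b+mx)\in R$. This keeps the argument entirely self-contained and close in spirit to the term-elimination philosophy of \cite{EisenbudSturmfels} that the section is developing. You instead lean on \ref{lemma:localization} to identify $F'/R'$ with $M[x^{-1}]$ and then invoke the explicit congruence describing a commutative monoid localization ($a$ and $b$ agree in $M[x^{-1}]$ iff $a+nx=b+nx$ in $M$ for some $n$). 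That criterion is indeed standard and holds with no cancellativity hypothesis (the usual $(M\times\NN)/\!\sim$ construction satisfies the universal property of \ref{lemma:localization}), so your argument is sound; note that the paper itself silently uses the same criterion in the proof of \ref{lemma:quotient_monoid_cuts_out_zariski_closure_of_localization}, so you are not introducing anything out of spirit. The trade-off is that your route is shorter and more conceptual but rests on a fact the paper does not state or prove, whereas the paper's route is longer but entirely elementary and makes the effectivity (Gr\"obner bases, \ref{lemma:eliminating_a_variable}) visible along the way, which matches the computational aims of \ref{section:algorithms_on_monoids}. Your forward inclusion matches the paper's (the paper just compresses it to ``$x$ is invertible modulo $R'$'').
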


\begin{proof}
   Since $x\in F'$ is invertible modulo $R'$, we have $(R:x^\infty)\subset R'\cap(F \times F)$. Conversely, pick $(a,b)$ in $R'\cap(F \times F)$. We want to show that $(a,b)+\NN x$ meets $R$. Consider the set $S$ of finite sequences $(a_1,\dots,a_n)$ in $F'$, where $(a_1,a_n)\in (a,b)+\NN x$ and for each $1\leq i<n$, either $a_{i+1}$ and $a_i$ are $R$-equivalent, or one is obtained from the other by applying the rule $t+x \to 0$. Since $a$ and $b$ are $R'$-equivalent, $S$ is nonempty. Pick a sequence $(a_1,\dots,a_n)\in S$ with $n$ minimal. We claim that $n=2$, from which the proposition would follow. We will now prove the claim.
   
   Suppose $n>2$. Since $n$ is minimal, the rule $t+x \to 0$ is applied at least once. Let $i,j$ be the first indices such that $a_{i+1}=a_i+t+x$ and $a_{j+1}+t+x=a_j$. We have $i<j$, and by the minimality of $j$ we know $a_{i+1}$ is $R$-equivalent to $a_j$. Therefore, $a_i+x$ is $R$-equivalent to $a_{j+1}+x$, so the sequence $(a_0+x,\dots,a_i+x,a_{j+1}+x,\dots,a_n+x)$ is in $S$ and contradicts the minimality of $n$.
\end{proof}

\begin{corollary}\label{corollary:algo_computes_integralization} In the setting of \ref{proposition:quotient_relation_computable_by_eliminating_variable}, if $B'$ is a Gr\"obner basis for $\left(R,(t+x,0)\right)\subset F'\times F'$, then by \ref{lemma:eliminating_a_variable} $B=B'\cap(F\times F)$ is a Gr\"obner basis for $(R\colon x^\infty)$. In particular, if $x \in F^\circ$, then $B$ is a Gr\"obner basis for $M^\int$.
\end{corollary}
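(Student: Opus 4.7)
The corollary is essentially a bookkeeping consequence of the three ingredients referenced in its statement, so the plan is simply to chain them together in the correct order. First I would extend the monomial order $\preceq$ on $F$ to a monomial order $\preceq'$ on $F' = F \oplus \bb N t$ in which $t$ dominates all of $F$; for instance, order by the $t$-degree first and break ties via $\preceq$. This is precisely the hypothesis needed to invoke \ref{lemma:eliminating_a_variable} with $F_0 = F$ and the auxiliary variable $t$.

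Next I would apply \ref{proposition:quotient_relation_computable_by_eliminating_variable}, which identifies the monoidal relation $(R : x^\infty) \subset F\times F$ with $R' \cap (F\times F)$, where $R'$ is generated by $R$ together with the rule $t + x \to 0$. Given a Gröbner basis $B'$ for $R'$ with respect to $\preceq'$, \ref{lemma:eliminating_a_variable} then yields that $B := B' \cap (F\times F)$ is a Gröbner basis for $R' \cap (F\times F) = (R : x^\infty)$ with respect to $\preceq$. These two citations do all the work for the first assertion.

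For the ``in particular'' clause, I would simply quote \ref{remark:integralization_is_an_ideal_quotient}: when $x$ lies in the interior $F^\circ$, the image of $x$ lies in $M^\circ$ and the integralization satisfies $M^{\sf int} = F/(R : x^\infty)$. Hence a Gröbner basis for $(R : x^\infty)$ is by definition a Gröbner basis presenting $M^{\sf int}$.

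The only potentially subtle point is that the conclusion is stated with respect to the original order $\preceq$ on $F$, while $B'$ is computed for the extended order $\preceq'$ on $F'$. I would make this compatibility explicit, noting that for any element of $F \subset F'$ its initial term under $\preceq'$ coincides with its initial term under $\preceq$, so initial ideals restrict correctly. Once this is observed, the corollary follows by direct concatenation of the cited results and there is no further obstacle.
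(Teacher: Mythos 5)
Your proof is correct and matches the paper's intended argument, which is exactly the chain \ref{proposition:quotient_relation_computable_by_eliminating_variable} $\Rightarrow$ \ref{lemma:eliminating_a_variable} $\Rightarrow$ \ref{remark:integralization_is_an_ideal_quotient}; the paper leaves the corollary without a separate proof precisely because it is this direct concatenation. Your remark about extending $\preceq$ to an order on $F'$ with $t$ dominant, and about initial terms of elements of $F$ being unchanged under the extension, makes explicit a hypothesis the paper leaves implicit, but it is the same route.
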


\subsection{Primes and faces}\label{subsection:primes_and_faces}

Consider a surjective monoid map $\Phi \colon \bb Z^n \to M$. By \ref{proposition:integralization_of_faces_dominate_a_logsch} and since integral log schemes are much better understood than non-integral ones, we have an interest in computing the integralizations of the faces of $M$. We have seen how to compute integralizations via Gr\"obner bases and interpret them geometrically (\ref{lemma:localization} and \ref{lemma:quotient_monoid_cuts_out_zariski_closure_of_localization}). We will now give a procedure to compute faces and their Gr\"obner bases.

Let $Z\subset \bb A^n$ be the pure difference scheme corresponding to $\Phi$. A face of $M$ corresponds to a restriction $Z_H \to H$, where $H\subset \bb A^n$ is a linear subspace cut out by a subset $J$ of the coordinates such that the ideal $JM\subset M$ is prime. If $JM$ is not prime, then $Z_H$ is not a pure difference scheme: some of the equations defining $Z$ may become monomial when we restrict to $H$.

\begin{lemma}\label{lemma:basis_elements_generate_primes_and_faces}
    Let $\phi \colon F \to M$ be a surjective monoid map, where $F$ is free with finite basis $E$. Let $p\subset M$ be a prime ideal, and $N$ be the face $M\backslash p$. Call $J$ the finite set of basis elements $E \cap \phi^{-1}p$. Then, $p$ is generated (as an ideal) by $\phi(J)$, and $N$ is generated (as a monoid) by $\phi(E\backslash J)$.
\end{lemma}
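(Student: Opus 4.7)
The plan is to pull back everything along the surjection $\phi$, use a direct combinatorial fact about prime ideals in free monoids, and then push the result forward.

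First I would verify that $I:=\phi^{-1}(p)$ is a prime ideal of $F$: it is stable under addition by $F$ because $p$ is an ideal, it is proper because $0\notin p$ (since $p$ is proper and $0$ is the neutral element), and if $f+g\in I$ then $\phi(f)+\phi(g)\in p$, so primality of $p$ forces $\phi(f)\in p$ or $\phi(g)\in p$, i.e.\ $f\in I$ or $g\in I$. Correspondingly, $F\setminus I$ is a face of $F$. Since $\phi$ is surjective, $\phi(I)=p$ and $\phi(F\setminus I)=N$, so it suffices to identify generators for $I$ as an ideal of $F$ and for $F\setminus I$ as a submonoid of $F$.

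The heart of the matter is the combinatorial fact that any prime ideal $I\subset F = \bb N^E$ is generated as an ideal by $J':=E\cap I$. One inclusion is immediate. For the other, given $f\in I$, write $f=\sum_{e\in \mathrm{supp}(f)} n_e\,e$ and apply primality iteratively to the decomposition to conclude that some $e\in\mathrm{supp}(f)$ lies in $I$, hence in $J'$; then $f\in e+F$ lies in the ideal generated by $J'$. The dual statement falls out for free: an element of $F$ lies outside $I$ precisely when its support avoids $J'$, so $F\setminus I = \bb N^{E\setminus J'}$ as submonoids, and the right-hand side is visibly generated as a monoid by $E\setminus J'$.

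Applying this with $J'=J$ and pushing forward along $\phi$ finishes the proof. The ideal of $M$ generated by $\phi(J)$ is contained in $p$ because $\phi(J)\subset p$, and conversely every $m\in p$ has a lift $f\in I$ with some $e\in J$ in its support, whence $m\in \phi(e)+M$. Similarly the submonoid of $M$ generated by $\phi(E\setminus J)$ equals $\phi(\bb N^{E\setminus J}) = \phi(F\setminus I) = N$. There is no substantive obstacle here; the only point requiring care is that prime ideals pull back to prime ideals along monoid surjections, which is what makes the combinatorial fact on free monoids applicable.
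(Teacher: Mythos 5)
Your proof is correct and takes essentially the same approach as the paper: decompose an element of the prime ideal (resp.\ face) into basis elements and apply primality (resp.\ the face property) to locate a generator. The paper simply applies the primality of $p$ and the face property of $N$ directly rather than first establishing that $\phi^{-1}(p)$ is a prime ideal of $F$, but this is only a cosmetic reorganization of the same argument.
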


\begin{proof}
    Any $f\in \phi^{-1}p$ is a sum of elements of $E$. One of those basis elements is in $\phi^{-1}p$ by primality, so $\phi(J)$ generates $p$. Likewise, any $g\in N$ is a sum of images of basis elements, all of which are in $N$ since $N$ is a face, so $\phi(E\backslash J)$ generates $N$.
\end{proof}

\begin{lemma}\label{lemma:primality_of_a_monoid_ideal_can_be_checked_on_a_presentation}
    Let $\phi\colon F \to M$ and $E$ be as in \ref{lemma:basis_elements_generate_primes_and_faces}. Write $E^\times:= E \cap M^\times$. Notice that if $p\subset M$ is prime, then $\phi^{-1}p$ may not meet $E^\times$. Let $R\subset F\times F$ be the relation cutting out $M$, and $B$ be a set of generators for $R$. For $J\in\ca P(E\backslash E^\times)$, the following conditions are equivalent.
    \begin{enumerate}
        \item $J$ is of the form $E\cap \phi^{-1}p$ for some prime ideal $p\subset M$.
        \item For any $(a,b)\in R$, we have $a\in J+F$ if and only if $b\in J+F$.
        \item For any $(a,b)\in B$, we have $a\in J+F$ if and only if $b\in J+F$.
    \end{enumerate}
\end{lemma}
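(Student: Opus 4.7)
The plan is to prove the equivalence by showing $(1)\Rightarrow(2)\Rightarrow(3)$ and $(3)\Rightarrow(1)$, where the middle implication is trivial since $B\subset R$, and the reverse implication $(3)\Rightarrow(2)$ is obtained by observing that the binary relation on $F$ defined by ``$a\in J+F$ if and only if $b\in J+F$'' is a monoidal equivalence relation (reflexive, symmetric, transitive, and compatible with translation since $J+F$ is an ideal of $F$). Therefore, if this relation contains $B$, it contains the monoidal equivalence relation $R$ generated by $B$.

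For $(1)\Rightarrow(2)$, suppose $p\subset M$ is prime and $J=E\cap\phi^{-1}p$. Let $(a,b)\in R$, so $\phi(a)=\phi(b)$. If $a\in J+F$, then $\phi(a)\in\phi(J)+M\subset p$, hence $\phi(b)\in p$. Expand $b=\sum_{e\in E}n_e e$ in the free monoid $F$; then $\sum_{e\in E}n_e\phi(e)\in p$. Iterated application of primality shows that some $\phi(e)\in p$ with $n_e>0$. Since $p$ is proper, $\phi(e)\notin M^\times$, so $e\notin E^\times$, and thus $e\in E\cap\phi^{-1}p=J$. Therefore $b\in J+F$. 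The converse is symmetric.

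For $(3)\Rightarrow(1)$, define the ideal $p:=\phi(J)+M$ of $M$. Having already deduced (2) from (3), we get $\phi^{-1}p=J+F$: the inclusion $\supseteq$ is immediate, and conversely, if $f\in\phi^{-1}p$, then $\phi(f)=\phi(j)+\phi(g)$ for some $j\in J,g\in F$, so $(f,j+g)\in R$, and (2) gives $f\in J+F$. To check that $p$ is prime, take $m_1,m_2\in M$ with $m_1+m_2\in p$ and lift to $a_1,a_2\in F$; then $a_1+a_2\in J+F$, so $a_1+a_2=j+f$ for some $j\in J$. Since $F$ is free on $E$ and $j$ is a basis element, the $j$-coordinate of $a_1+a_2$ is at least $1$, forcing one of the $a_i$ to have $j$-coordinate $\geq 1$, i.e. $a_i\in j+F\subset J+F$, whence $m_i\in p$. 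Properness of $p$ follows from $J\subset E\setminus E^\times$, which ensures no element of $J$ maps to a unit. Finally, $J=E\cap\phi^{-1}p$: the inclusion $\subseteq$ is clear, and for the reverse, if $e\in E\cap\phi^{-1}p=E\cap(J+F)$, then $e=j+f$ in the free monoid $F=\mathbb N^E$, and comparing the total degree of this equation forces $j=e$ and $f=0$, so $e\in J$.

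The main obstacle, if any, is the basis-element argument for primality in step $(3)\Rightarrow(1)$: it is essential that $F$ be \emph{free} on $E$ (so that writing a basis element as $j+f$ forces $j$ to equal that basis element) and that $J$ avoid $E^\times$ (so that $0\notin J+F$ and $p$ is proper). Everything else reduces to the observation that $J+F$ is a well-defined ideal of $F$ that is $R$-saturated under condition $(3)$.
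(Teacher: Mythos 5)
Your proof is correct and takes essentially the same route as the paper's: the key step $(2)\Leftrightarrow(3)$ is handled identically, by observing that the relation ``$a\in J+F$ iff $b\in J+F$'' is a monoidal equivalence relation so that containing $B$ is the same as containing the generated relation $R$; the paper simply declares $(1)\Leftrightarrow(2)$ ``clear'' whereas you spell it out. One small imprecision worth noting: your justification that the relation is monoidal says ``since $J+F$ is an ideal of $F$,'' but being an ideal only gives one of the two implications needed for $a\sim b\Rightarrow a+f\sim b+f$; what is actually used is that $J+F$ is a \emph{prime} ideal of the free monoid $F$ (equivalently, that $a+f\in J+F$ iff $a\in J+F$ or $f\in J+F$), which holds because $J$ consists of basis elements.
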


\begin{proof}
    $(1) \Longleftrightarrow (2)$ is clear. The relation $\sim$ on $F\times F$ defined by $a\sim b$ whenever [$a\in J+F$ if and only if $b\in J+F$] is symmetric, reflexive, transitive and monoidal, so it contains $B$ if and only if it contains $R$ and we have $(3) \Longleftrightarrow (2)$.
\end{proof}

\begin{lemma}
    Let $\phi\colon F \to M$, $p$, $N$, $E$ and $J$ be as in \ref{lemma:basis_elements_generate_primes_and_faces}, so that $\phi^{-1}N$ is the face $F^0=\bigoplus\limits_{a\in E\backslash J} \bb N a$ of $F$. Call $R$ the relation in $F\times F$ cutting out $M$. Then $N$ is cut out in $F^0$ by $R\cap (F^0 \times F^0)$, and the square of pointed monoids
    \[
    \begin{tikzcd}
        F \cup\{\infty\} \arrow[r]\arrow[d] & M \cup\{\infty\} \arrow[d] \\
    F^0 \cup \{\infty\} \arrow[r] & N \cup \{\infty\}
    \end{tikzcd}
    \]
    is cocartesian.
\end{lemma}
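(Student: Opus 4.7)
My plan is to dispatch the two claims separately, with the preceding lemmas carrying the real content.

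For the first claim, by \ref{lemma:basis_elements_generate_primes_and_faces} the face $N$ is generated as a monoid by $\phi(E\setminus J)$, so the restriction $\phi|_{F^0}\colon F^0 \to N$ is well-defined and surjective. Its equivalence-relation kernel is tautologically $R\cap(F^0\times F^0)$: for $a,b\in F^0$, the equation $\phi(a)=\phi(b)$ in $N$ coincides with the same equation in $M$, which is the defining condition $(a,b)\in R$. Since the restriction of a monoidal equivalence relation to a submonoid is again a monoidal equivalence relation, this yields the desired presentation $N=F^0/(R\cap(F^0\times F^0))$.

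For the cocartesian square, I would check the universal property directly in $\cat{Mon}^0$. First, observe that $F\setminus F^0 = J+F$ is the ideal of $F$ generated by $J$, so the map $F\cup\{\infty\}\to F^0\cup\{\infty\}$ is precisely the ideal-collapse quotient described in \ref{definition:quotients_of_pointed_monoids_by_ideals}. Now let $T$ be a pointed monoid equipped with compatible maps $\iota\colon F^0\cup\{\infty\}\to T$ and $\pi\colon M\cup\{\infty\}\to T$. For any $f\in F\setminus F^0$, compatibility forces $\pi(\phi(f))=\iota(\infty)=\infty_T$; since $\phi$ is surjective and $\phi(F\setminus F^0)=\phi(J)+M=p$ by \ref{lemma:basis_elements_generate_primes_and_faces}, $\pi$ must send every element of the ideal $p\cup\{\infty\}$ to $\infty_T$. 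By \ref{lemma:quotient_of_pointed_monoid_by_prime_ideal_is_the_corresponding_face}, $\pi$ therefore factors uniquely through $N\cup\{\infty\}$, and the first part of the lemma guarantees that the induced composite with the natural map $F^0\cup\{\infty\}\to N\cup\{\infty\}$ agrees with the given $\iota$. This exhibits $N\cup\{\infty\}$ as the pushout.

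The only point requiring care is the identity $\phi(F\setminus F^0)=p$ rather than merely $\phi(F\setminus F^0)+M=p$, which is handled for free by surjectivity of $\phi$. Beyond that the argument is formal, relying only on the face/ideal dictionary for pointed monoids packaged in \ref{lemma:quotient_of_pointed_monoid_by_prime_ideal_is_the_corresponding_face} and the combinatorial description of faces from \ref{lemma:basis_elements_generate_primes_and_faces}; no Gr\"obner or algorithmic input is needed here.
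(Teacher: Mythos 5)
Your proof is correct and follows essentially the same route as the paper: the first claim is the injectivity of $N\hookrightarrow M$ made explicit, and the cocartesianness is deduced from the fact that both vertical arrows are ideal-collapse quotients together with \ref{lemma:quotient_of_pointed_monoid_by_prime_ideal_is_the_corresponding_face}. You simply spell out the universal-property check that the paper compresses into one sentence; one small imprecision is your appeal to ``the first part of the lemma'' to show the remaining triangle commutes, when in fact this follows directly from the compatibility of $\iota$ and $\pi$ on $F^0\cup\{\infty\}$ without invoking the first claim.
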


\begin{proof}
    Existence and cocartesianness of the square follow from \ref{lemma:quotient_of_pointed_monoid_by_prime_ideal_is_the_corresponding_face}. Any $a,b \in F^0$ have the same image in $N$ if and only if they have the same image in $M$, so $F^0 \to N$ is cut out by $R\cap (F^0 \times F^0)$.
\end{proof}

\subsection{Saturation}

Throughout this subsection, $M$ is an integral and finitely generated monoid with a given finite presentation
\[
\bigoplus_{i\in\ca I} \bb N.e_i/(a_j=b_j)_{j\in \ca J},
\]
and our goal is to compute $M^{sat}$. We can compute $M^\times$ and $\o M$, and we (non-canonically but explicitly) have \begin{align}
    M \iso & M^\times \oplus \overline M \\
    M^{sat} \iso & M^\times \oplus (\overline M)^{sat},
\end{align}
so from now on we also assume $M$ has no nontrivial units.

\begin{remark}
    Computing $M^{sat}$ is analogous to computing the integral closure of a ring inside its fraction field. In fact, when $M$ is finitely generated it follows from \cite[Corollary 12.6]{GilmerCommutativeSemigroupRings} that the integral closure of $k[M]$ is $k[M^{sat}]$ for any field $k$. One could compute this integral closure using the general (but costly) algorithm of \cite{DeJongAnAlgoForComputingIntegralClosure}, which De Jong attributes to Grauert and Remmert \cite{GrauertRemmertAnalytische}, \cite{GrauertRemmertCoherentAnalyticSheaves}. We present a method better suited to compute saturations.
\end{remark}

We may compute a basis for $M^{gp} \iso \bb Z^n$, thus identifying $M$ with the submonoid of $\bb Z^n$ spanned by some vectors $v_i$. Then $M\otimes_{\bb N} \bb Q_{\geq 0}$ is the cone inside $\bb Q^n$ where all the projections
\[
v_i^\vee \colon \bb Q^n = \bb Qv_i \oplus v_i^\perp \to \bb Qv_i
\]
are nonnegative; and $M^{sat}$ is the monoid of lattice points inside $M\otimes_{\bb N} \bb Q_{\geq 0}$. Computing the $v_i^\vee$ is fast, and together they characterize $M^{sat}$. Computing a presentation for $M^{sat}$, or generators for it as a submonoid of $\bb Z^n$, is much more costly. Therefore, it is convenient to store a fs monoid as the data of generators for its extreme rays\footnote{A ray of an fs monoid is the submonoid spanned by a primitive non-invertible element, and an extreme ray is a ray which is also a face.} inside $\bb Z^n$ whenever possible. If one really needs a presentation, one can compute it as follows.

\begin{enumerate}
    \item Write $\bb Q_{\geq 0}M$ as a union of cones with $n$ rays, thereby reducing to the case where $\bb Q_{\geq 0}M$ has $n$ rays with primitive vectors $v_1,...,v_n\in\bb Z^n$.
    \item Find the set $\Lambda^0$ of all lattice points in the semi-open hypercube $\prod_{1\leq j\leq n}[0,1[v_j$. This can be done e.g. by computing a LLL basis $(e'_1,...,e'_n)$ \cite{lenstra1982factoring} with respect to the Euclidean norm $\|-\|\colon\sum_{j}\lambda_jv_j \mapsto \sum_j \lambda_j^2$ and probing the hypercube with translates of the fundamental domain $\prod_{1\leq j\leq n}[0,1[e'_j$.
    \item $M^{sat}$ is now the submonoid of $\bb Z^n$ generated by $\Lambda^0\sqcup\{v_1,...,v_n\}$.
\end{enumerate}

\begin{remark}
    This method has exponential worst-case complexity and might seem rough at first glance, but it is analogous to the best known algorithms for the short vector problem, which is known to be NP-hard. The original proof of NP-hardness is \cite{van1981another}. For a modern survey on related problems and results, we refer to \cite{BennettTheComplexityOfTheShortestVectorProblem}.
\end{remark}

\subsection{Algorithms}

We summarize the results of \ref{section:algorithms_on_monoids} in the form of a list of algorithms. In what follows, $E$ is a finite set, $F$ is the free monoid $\bigoplus\limits_{e\in E} \bb N e$, $R$ is a monoidal relation on $F$ and $\phi \colon F \to F/R=:M$ is the corresponding surjective monoid map.

\begin{algorithm}[Invertible basis elements]\label{algorithm:invertible_basis_elements}

Input: A monomial order $\preceq$ on $F$ and a Gr\"obner basis $B\subset F\times F$ for $R$ with respect to $\preceq$. \\
    Output: The set $E^\times:= E\cap\phi^{-1} M^\times$.\\
    Procedure: For each $e\in E$ rewrite $e$ as much as possible using rules of $B$. Since $B$ is a Gr\"obner basis, the result is the normal form of $e$ with respect to $\preceq, R$. The subset of $E$ made up of elements whose normal form is $0$ is $E^\times$.
\end{algorithm}

\begin{algorithm}[Prime ideals] Input: a finite generating set $B$ for $R$, and the subset $E^\times =E\cap\phi^{-1}M^\times$ of $E$.\\
    Output: All the subsets of $E$ of the form $E\cap \phi^{-1}p$, where $p$ is a prime ideal of $M$. Those are in explicit bijection with primes of $M$ and with faces of $M$ by \ref{lemma:basis_elements_generate_primes_and_faces}.\\
    Procedure: Output the subsets of $E\backslash E^\times$ which satisfy condition (3) of \ref{lemma:primality_of_a_monoid_ideal_can_be_checked_on_a_presentation}.
\end{algorithm}

\begin{algorithm}\label{algorithm:computation_integralized_faces}
    Input: A subset $E'\subset E$ such that the image $M'$ of $F':=\bigoplus\limits_{e\in E'} \bb N e$ in $M$ is a face, and a Gr\"obner basis $B$ of $R$ with respect to a monomial order $\preceq$ such that $f\succ F'$ for all $f \in F\backslash F'$. \\
    Output: A $\preceq$-Gr\"obner basis for the relation $(R')^\int$ cutting out $(M')^\int$ inside $F'$.\\
    Procedure: Put $x=\sum\limits_{e\in E'} e$. Restrict $B$ to $F'\times F'$ to obtain a Gr\"obner basis for the relation $R'$ cutting out $M'$. Then, compute a Gr\"obner basis for $(R')^\int=(R'\colon x^\infty)$ using \ref{proposition:quotient_relation_computable_by_eliminating_variable}. Equivalently, one could first compute a Gr\"obner basis for $(R\colon x^\infty)$ using \ref{proposition:quotient_relation_computable_by_eliminating_variable} then restrict to $F' \times F'$.
\end{algorithm}

\begin{remark}
    If one is interested in computing all the integralized faces simultaneously rather than just one of them, it is suboptimal to compute Gr\"obner bases for all lexicographic monoidal orders and apply \ref{algorithm:computation_integralized_faces} to all faces. The improvements one can make include
    \begin{itemize}
        \item Computing Gr\"obner bases only with respect to the lexicographic monoidal orders in a family $(\preceq_i)_{i\in I}$ such that for each face $M'\subset M$, there is at least one $i\in I$ such that $\phi^{-1}M' \preceq_i F\backslash \phi^{-1}M'$.
        \item Rather than applying \ref{algorithm:computation_integralized_faces} to each face separately, avoid redundant computations using the fact that if $x=x'+x''$ then $(R':x^\infty)=((R':x'^{\infty}):x''^\infty)$.
    \end{itemize}
\end{remark}

\bibliographystyle{alpha} 
\bibliography{biblio}

\end{document}